\newcommand{\N}{\mathbb{N}} 
\newcommand{\K}{\mathbb{K}} 
\newcommand{\B}{\mathcal{B}} 
\DeclareMathOperator{\Prm}{Prim}
\newcommand{\Prim}[2]{\ifthenelse{\equal{#2}{}}{\Prm(#1)}{{\Prm(#1)}_{#2}}} 
\DeclareMathOperator{\Gn}{Gén}
\newcommand{\Gen}[2]{\ifthenelse{\equal{#2}{}}{\Gn(#1)}{{\Gn(#1)}_{#2}}} 
\newcommand{\Idl}[1]{\mathcal{#1}} 
\newcommand{\FQSym}{\mathbf{FQSym}} 
\newcommand{\QSym}{\mathbf{QSym}} 
\newcommand{\NSym}{\mathbf{NSym}} 
\newcommand{\Sn}[1]{\mathfrak{S}_{#1}} 
\newcommand{\tens}[1]{T\langle #1 \rangle} 
\newcommand{\dual}[1]{#1^{\circledast}} 
\newcommand{\ot}{\otimes} 
\newcommand{\De}{\Delta} 
\newcommand{\Rq}[1]{\ifthenelse{\equal{#1}{}}{\subparagraph{Remarques.}}{\subparagraph{Remarque.}}} 
\newcommand{\Ex}[1]{\ifthenelse{\equal{#1}{}}{\subparagraph{Exemples.}}{\subparagraph{Exemple.}}} 
\newcommand{\Cex}{\subparagraph{Contre-exemples.}} 
\newcommand{\ExCex}{\subparagraph{Exemples et contre-exemples.}} 
\newcommand{\CQMM}{théorème de Cartier-Quillen-Milnor-Moore} 
\newcommand{\PH}{Poincaré-Hilbert} 
\newcommand{\Lieb}{\mathcal{L}eib}
\newcommand{\Zinb}{\mathcal{Z}inb}
\newcommand{\Com}{\mathcal{C}om}
\newcommand{\PreLie}{\mathcal{P}ré\mathcal{L}ie}
\newcommand{\Quad}{\mathcal{Q}uad}
\DeclareMathOperator{\AAlph}{Alph}
\DeclareMathOperator{\IAlph}{IAlph}
\DeclareMathOperator{\Irr}{Irr}
\DeclareMathOperator{\pack}{pack}
\newtheorem{defi}{\indent Définition}
\newtheorem{cor}[defi]{\indent Corollaire}
\newtheorem{theo}[defi]{\indent Théorème}
\newtheorem{prop}[defi]{\indent Proposition}
\newcommand{\PX}{\pack(X^{*})}
\newcommand{\WMat}{\mathbf{WMat}}
\newcommand{\SH}{\mathfrak{S}\mathcal{H}}
\newcommand{\SPW}{\mathbf{SPW}}
\newcommand{\ISPW}{\mathbf{ISPW}}
\newcommand{\Pn}[1]{\mathcal{P}r_{#1}} 
\newcommand{\Cp}{\mathcal{C}_{e} }
\begin{document}
\title{A propos de l'algèbre de Hopf des mots tassés $\WMat$}
\date{}
\author{Cécile Mammez\\
\small{\it Univ. Littoral Côte d'Opale,}\\
\small{\it EA 2597 - LMPA Laboratoire de Mathématiques Pures et Appliquées Joseph Liouville,} \\ 
\small{\it F-62228 Calais, France and CNRS, FR 2956, France}\\
\small{e-mail : mammez@lmpa.univ-littoral.fr}
}

\maketitle

\textbf{Résumé.} Dans cet article, on étudie l'algèbre de Hopf des mots tassés $\WMat$ introduite par Duchamp, Hoang-Nghia et Tanasa. Pour cela on commence par considérer $\WMat$ à proprement parler (absence de coliberté et description du dual). Puis, on s'intéresse à une sous-algèbre de Hopf de permutations, notée $\SH$, dont le dual $\dual{\SH}$ est muni d'une structure de quadri-algèbre et donc d'une double structure d'algèbre dendriforme. On introduit par la suite $\ISPW$, une algèbre de Hopf de mots tassés stricts croissants. Son caractère cocommutatif pousse à s'intéresser à ses éléments primitifs. On en décrit quelques familles. On montre ensuite que $\ISPW$ et l'algèbre de Hopf $\NSym$ des fonctions symétriques non commutatives sont isomorphes. On définit par la suite une algèbre de Hopf quotient de compositions étendues $\Cp$. Celle-ci n'est pas cocommutative mais ses éléments primitifs sont liés à ceux de $\ISPW$. De plus, on exprime $\Cp$ comme un coproduit semi-direct d'algèbres de Hopf. Cette construction met à jour une coaction permettant par la suite de définir deux actions de groupes. On termine par la construction d'un isomorphisme explicite entre $\dual{\ISPW}$ et $\QSym$ conduisant à un isomorphisme explicite entre $\ISPW$ et $\NSym$. \\

\textbf{Mots-clés.} Algèbres de Hopf combinatoires, mots tassés, permutations, mots tassés stricts croissants, compositions, coaction, coproduit semi-direct, action, morphismes, fonctions quasi-symétriques. \\

\textbf{Abstract.} In this article we study the packed words Hopf algebra $\WMat$ introduced by Duchamp, Hoang-Nghia et Tanasa. We start by explaining that $\WMat$ is not cofree, giving its antipode and describing its graded dual. We consider then a Hopf sub-algebra of permutations called $\SH$. Its graded dual $\dual{\SH}$ has a quadri-algebra structure, so it has a double dendriform algebra structure too. Thereafter, we introduce  $\ISPW$, a Hopf algebra of increasing strict packed words. It is graded, connected and cocommutative so is isomorphic to the enveloping algebra of its primitive elements. We describe some families of primitive elements. We prove that $\ISPW$ and non commutative symmetric functions are isomorphic.  We define then an extended compositions Hopf algebra  $\Cp$. It is not cocommutative but its primitive elements and those from $\ISPW$ are linked.  We give an interpretation of $\Cp$ in terms of a semi-direct coproduct Hopf algebra. By using this, we can define two actions groups. We finish by giving an explicit isomorphism between $\dual{\ISPW}$ and $\QSym$ and another one between  $\ISPW$ and  $\NSym$. \\

\textbf{Keywords.} Combinatorial Hopf algebras, packed words, permutations, increasing strict packed words, compositions, coaction, semi-direct coproduct, action, morphisms, quasi-symmetric functions. \\

\textbf{AMS classification.} 16T30.\\

\tableofcontents
\allowdisplaybreaks
\section*{Introduction}
Le procédé d'extraction-contraction est largement utilisé dans la construction de cogèbres: on peut par exemple penser à la théorie des nombres avec l'algèbre de Hopf des diagrammes de dissection introduite par Dupont dans \cite[chapitre 2]{Dupont2014}, à la théorie des champs quantiques avec l'algèbre de Hopf d'arbres enracinés de Connes et Kreimer \cite{Connes1999,Connes2000}, l'algèbre de Hopf des arbres de Calaque, Ebrahimi-Fard et Manchon \cite{Calaque2011}, l'algèbre de Hopf des diagrammes orientés de Manchon \cite{Manchon2012}, à la théorie des champs quantiques non commutatifs \cite{Tanasa2013} ou encore la théorie des modèles de gravité quantique n-dimensionnels \cite{Markopoulou2003}\dots{} Dans leur article \cite{Duchamp2013}, Duchamp, Hoang-Nghia et Tanasa souhaitent construire un modèle d'algèbre de Hopf d'extraction-contraction pour les mots tassés et introduisent l'algèbre de Hopf $\WMat$. Outre sa construction, ils prouvent également qu'elle est librement engendrée par un ensemble appelé ensemble des irréductibles. Ils graduent cette nouvelle algèbre en fonction de la longueur des mots tassés, explicitent la dimension des espaces vectoriels engendrés par les éléments d'un degré donné et calculent le nombre de mots tassés irréductibles de degré fixé.

L'objectif de cet article est de mieux comprendre l'algèbre de Hopf $\WMat$, soit dans sa globalité, soit en considérant des sous-objets ou des objets quotients. 

Pour cela, nous nous intéressons dans une première section au coproduit et à l'antipode de $\WMat$ (propositions \ref{propantipodegen} et \ref{propantipode}). La non coliberté de cette algèbre de Hopf est établie et ses opérations sont ensuite dualisées afin de déterminer celles de son dual $\dual{\WMat}$. 

Dans une deuxième section, l'accent est mis sur des objets associés à $\WMat$. Nous débutons par une algèbre de Hopf de permutations cocommutative, notée $\SH$. Elle est à la fois un sous-objet et un objet quotient de $\WMat$. Son produit est toujours une concaténation décalée et son coproduit consiste essentiellement à partitionner en deux sous-ensembles les lettres d'un mots $w$ donné. Son dual $\dual{\SH}$ est colibre et commutatif. La structure d'algèbre est donnée par un double battage et la structure de cogèbre par une déconcaténation décalée. En se focalisant sur l'algèbre sous-jacente de $\dual{\SH}$, il est aisé de constater l'existence d'une structure de quadri-algèbre. Nous définissons par la suite une algèbre de Hopf de mots tassés stricts croissants, $\ISPW$. Cette dernière est construite en quotientant $\WMat$ par l'idéal de Hopf engendré par les mots tassés possédant la lettre $x_{0}$ dans leur écriture puis en considérant la sous-algèbre de Hopf librement engendrée par les classes des mots de la forme $\underbrace{x_{1}\dots x_{1}}_{n \text{ fois}}$ et a pour base les classes des mots tassés de la forme $\underbrace{x_{1}\dots x_{1}}_{n_{1} \text{ fois}}\ast\dots\ast\underbrace{x_{1}\dots x_{1}}_{n_{k} \text{ fois}}$. Le coproduit de la classe d'un mot de la forme $\underbrace{x_{1}\dots x_{1}}_{n_{1} \text{ fois}}\ast\dots\ast\underbrace{x_{1}\dots x_{1}}_{n_{k} \text{ fois}}$ est obtenu en partitionnant en deux sous-ensembles les $k$ mots de la forme $\underbrace{x_{1}\dots x_{1}}_{n_{i} \text{ fois}}$.  De par son caractère cocommutatif, $\ISPW$ s'écrit, d'après le \CQMM{}, comme algèbre enveloppante de ses primitifs. Nous déterminons alors l'existence d'une base particulière pour les éléments primitifs (proposition \ref{formebaseparticuliere}) ainsi que quelques éléments primitifs (propositions \ref{prim} à \ref{prim_Lambda}). L'étude de $\dual{\ISPW}$ permet d'affirmer qu'elle est isomorphe à l'algèbre de Hopf des fonctions quasi-symétriques (proposition \ref{qsymispwdual}) et donc, $\ISPW$ et l'algèbre de Hopf des fonctions symétriques non commutatives sont isomorphes (proposition \ref{nsymispw}). Nous terminons cette partie par l'introduction d'une algèbre de Hopf de compositions étendues notée $\Cp$. Elle s'obtient en identifiant chaque mot tassé $w$ avec chaque mot tassé $w_{\sigma}$ obtenu par permutation des lettres de $w$. On en donne une base dont les vecteurs sont de la forme $(\alpha_{0},\alpha_{1},\dots,\alpha_{n})$ où $\alpha_{0}$ est un entier naturel éventuellement nul et $\alpha_{1}$, \dots, $\alpha_{n}$ sont des entiers naturels non nuls. La composition étendue $(\alpha_{0},\alpha_{1},\dots,\alpha_{n})$ représente la classe du mot tassé $w_{\alpha}=\underbrace{x_{0}\dots x_{0}}_{\alpha_{1} \text{ fois}}\ast\dots\ast\underbrace{x_{1}\dots x_{1}}_{\alpha_{n} \text{ fois}}$. Ainsi, pour tout $i\in\llbracket0,n\rrbracket$, l'entier $\alpha_{i}$ correspond à la fréquence d'apparition $|w_{\alpha}|_{x_{\alpha_{i}}}$ de la lettre $x_{\alpha_{1}}$ dans le mot $w_{\alpha}$. Le produit de deux compositions étendues est obtenu par somme des fréquences d'apparition de la lettre $x_{0}$ et concaténation des fréquences des autres lettres. Le coproduit est obtenu à partir de $\De(w_{\alpha})$ dans $\WMat$. L'étude des deux opérations de $\Cp$ montre qu'elle est munie d'une structure de coproduit semi-direct d'algèbres de Hopf (propositions \ref{Hcomodule} à \ref{isomCoprodSemidirect}). Cette structure permet de mettre à jour deux actions de groupes (propositions \ref{actionDuale} et \ref{actionCaracteres}). Nous considérons ensuite les éléments primitifs de $\Cp$. Ils sont liés à ceux de $\ISPW$. Ceci fait l'objet des propositions \ref{lienCpISPW} à \ref{primCp}. 

L'objectif de la dernière section est de construire deux isomorphismes explicites; le premier entre $\dual{\ISPW}$ et $\QSym$, le deuxième entre $\ISPW$ et $\NSym$. Pour cela, on utilise un procédé introduit par Aguiar, Bergeron, Sottile dans \cite{Aguiar2006} exprimant $\QSym$ comme objet terminal dans la catégorie des algèbres de Hopf combinatoires. 

Vous trouverez une version plus détaillée et plus complète de l'ensemble de ces résultats dans mon manuscrit de thèse \cite{Mammezb}.

\section{Algèbre de Hopf des mots tassés}

Commençons par rappeler la construction de $\WMat$ \cite{Duchamp2013} ainsi que quelques notations utiles dans la suite de l'article. Rappelons également l'existence d'une famille de mots tassés qui engendrent librement l'algèbre de Hopf: les irréductibles. Par la suite, l'objectif est de mettre à jour quelques particularités de $\WMat$.  Nous établissons le caractère non-colibre de la cogèbre associée à $\WMat$ et explicitons l'antipode. Nous terminons par la présentation du dual gradué de $\WMat$ que l'on note $\dual{\WMat}$. 

\subsection{Algèbre de Hopf $\WMat$}

\subsubsection*{Notations}
Soit $\K$ un corps commutatif de caractéristique 0. On considère un alphabet infini, dénombrable, totalement ordonné $X=\{x_{0}<x_{1}<x_{2}<\dots\}$ et l'on note $X^{*}$ l'ensemble des mots formés à partir de $X$.\\
Pour tout entier naturel $n$ non nul, tout mot $w=x_{k_{1}}\dots x_{k_{n}}\in X^{*}$ et tous les entiers naturels $i$ et $s$, on pose :
\begin{align*}
|w|=& n,& |w|_{x_{i}}=&Card\big\{s\in\llbracket 1,n\rrbracket\text{ } |\text{ } k_{s}=i\big\},\\
\AAlph(w)=&\big\{x_{i}\text{ }|\text{ } |w|_{x_{i}}\neq 0\big\},& \IAlph(w)=&\big\{i\in\N\text{ }|\text{ } |w|_{x_{i}}\neq 0\big\},\\
\sup(w)=&\sup(\IAlph(w)),& T_{s}(w)=&x_{u_{1}}\dots x_{u_{n}} \text{ où } u_{j}=\begin{cases}k_{j}+s & \mbox{ si } k_{j}\neq 0, \\ 0 & \mbox{ sinon. }\end{cases}
\end{align*}
Si l'on pose $\IAlph(w)\setminus\{0\}=\{j_{1}<\dots<j_{s}\}$ (avec $s=\sup(w)$), on peut définir le mot tassé $\pack(w)= x_{p_{1}}\dots x_{p_{n}}$ où $p_{j}=\begin{cases} m & \mbox{ si } k_{j}=j_{m} \\ 0 & \mbox{ si } k_{j}=0 \end{cases}$ et $\PX=\big\{\pack(w)\text{ }|\text{ } w\in X^{*}\big\}$.
\Ex{} $\pack(x_{3}x_{7}x_{1}x_{8})=x_{2}x_{3}x_{1}x_{4}$ et $\pack(x_{50}x_{7}x_{0}x_{8})=x_{3}x_{1}x_{0}x_{2}$.\\

On définit le produit de concaténation décalée suivant: 
$$\ast:\left\{\begin{array}{rcl} \PX\ot\PX &\longrightarrow & \PX \\ (u,v) &\longrightarrow &
u\ast v = uT_{\sup(u)}(v).\end{array}\right.$$
\Ex{1} $x_{2}x_{1}x_{0}\ast x_{0}x_{1}x_{0}x_{3}x_{2}=x_{2}x_{1}x_{0}x_{0}x_{3}x_{0}x_{5}x_{4}$.\\

On définit le coproduit d'extraction-contraction suivant : 
$$\Delta:\left\{\begin{array}{rcl} \PX &\longrightarrow & \PX\otimes\PX \\ w &\longrightarrow &
\Delta(w)=\sum\limits_{I+J =\{1,\dots,|w|\}}\pack(w[I])\otimes \pack(w[J]/w[I])\end{array}\right.$$
où 
\begin{align*}
I+J& \text{ désigne l'union disjointe de } I \text{ et } J, \\
w[I]&= x_{k_{i_{1}}}\dots x_{k_{i_{l}}} \text{, si } I=\{i_{1}<\dots<i_{l}\},\\
w[J]/w[I]&= \tilde{x}_{k_{j_{1}}}\dots\tilde{x}_{k_{j_{|w|-l}}} \text{ où } J=\{j_{1}<\dots<j_{|w|-l}\} \text{ et } 
\tilde{x}_{k_{j_{t}}}=\begin{cases}
x_{k_{j_{t}}}& \mbox{ si } k_{j_{t}}\notin \AAlph(w[I]),\\
x_{0}& \mbox{ sinon. }
\end{cases}
\end{align*}

Pour une meilleure lisibilité, on définit le coproduit réduit par: 
$$\tilde{\Delta}:\left\{\begin{array}{rcl} \PX\setminus\K &\longrightarrow &  \PX\otimes \PX \\ w &\longrightarrow &
\tilde{\Delta}(w)=\Delta(w)-w\ot1-1\ot w.\end{array}\right.$$ Il est coassociatif, non unitaire. 

\Ex{}
\begin{align*}
\tilde{\De}(x_{1}x_{2}x_{0})
&=x_{1}\ot x_{1}x_{0}+x_{1}\ot x_{1}x_{0}+x_{0}\ot x_{1}x_{2}+x_{1}x_{2}\ot x_{0}+x_{1}x_{0}\ot x_{1}+x_{1}x_{0}\ot x_{1}\\
&=2x_{1}\ot x_{1}x_{0}+x_{0}\ot x_{1}x_{2}+x_{1}x_{2}\ot x_{0}+2x_{1}x_{0}\ot x_{1},\\
\tilde{\De}(x_{1}x_{2}x_{1})
&=x_{1}\ot x_{1}x_{0}+x_{1}\ot x_{1}x_{1}+x_{1}\ot x_{0}x_{1}+x_{1}x_{2}\ot x_{0}+x_{1}x_{1}\ot x_{1}+x_{2}x_{1}\ot x_{0}.
\end{align*}

L'espace $\WMat=(Vect(\PX),\ast,1_{X^{*}},\Delta,\varepsilon)$ (où $1_{X^{*}}$ et $\varepsilon$ désignent respectivement l'unité et la co-unité) est l'algèbre de Hopf des mots tassés sur laquelle nous allons travailler.
On rappelle (\emph{cf.} \cite[proposition 2]{Duchamp2013}) que $\WMat$ est librement engendrée par
$$\Irr(\WMat)=\bigg\{w\in\PX \text{ } | \text{ } \forall u,v\in\PX, (w=u\ast v)\Longrightarrow (u=1 \text{ ou } v=1)\bigg\}.$$
Tout mot $w\in \Irr(\WMat)$ est dit irréductible.\\

Dans la suite, sauf mention contraire, $w\in\WMat$ ou $w\in(\WMat)_{n}$  sous-entendra toujours que $w$ est un mot tassé ou un mot tassé de longueur $n$.

\subsubsection{Utilisation du coproduit pour prouver l'absence de coliberté}\label{WMatnoncol}
On souhaite montrer que $\WMat$ n'est pas colibre. Pour cela on considère les éléments primitifs de la cogèbre des mots tassés. Soit $F_{\WMat}$ la série formelle de $\WMat$.
On a, par calcul direct du nombre de mots tassés: $$F_{\WMat}(h)=1+2\sum_{\substack{n\in\N^{*} \\(\alpha_{1}\dots \alpha_{k})\models n}}\frac{n!}{\alpha_{1}!\dots\alpha_{k}!}h^{n}=1+2h+6h^{2}+26h^{3}+\dots$$
Si $\WMat$ était colibre la série formelle $F_{\Prim{\WMat}{}}$ de ses éléments primitifs serait égale à: $$F_{\Prim{\WMat}{}}(h)=1-\frac{1}{F_{\WMat}(h)}=2h+2h^{2}+10h^{3}+\dots$$
Cherchons alors une base de $\Prim{\WMat}{3}$, l'espace des éléments primitifs de degré 3, par calcul direct. On obtient : 
$$\Prim{\WMat}{3}=Vect(V_{1},\dots,V_{12})$$
où $V_{1},\dots,V_{12}$ sont les vecteurs linéairement indépendants suivants: 
\begin{align*}
V_{1}&=-x_{0}x_{1}x_{2}+2x_{1}x_{0}x_{2}-x_{1}x_{2}x_{0},& V_{2}&=-x_{0}x_{2}x_{1}+2x_{1}x_{0}x_{2}-2x_{1}x_{2}x_{0}+x_{2}x_{1}x_{0},\\
V_{3}&=x_{1}x_{0}x_{2}-x_{1}x_{2}x_{0}-x_{2}x_{0}x_{1}+x_{2}x_{1}x_{0},& V_{4}&=-x_{1}x_{2}x_{3}-2x_{3}x_{2}x_{1}+3x_{3}x_{1}x_{2},\\
V_{5}&=-x_{1}x_{3}x_{2}-x_{3}x_{2}x_{1}+2x_{3}x_{1}x_{2},& V_{6}&=-x_{2}x_{1}x_{3}-x_{3}x_{2}x_{1}+2x_{3}x_{1}x_{2},\\
V_{7}&=-x_{2}x_{3}x_{1}+x_{3}x_{1}x_{2},& V_{8}&=-x_{1}x_{2}x_{1}+x_{2}x_{1}x_{2},\\
V_{9}&=-x_{1}x_{2}x_{2}+2x_{2}x_{1}x_{2}-x_{2}x_{2}x_{1},& V_{10}&=-x_{2}x_{1}x_{1}+2x_{1}x_{2}x_{1}-x_{1}x_{1}x_{2},\\
V_{11}&=-x_{0}x_{1}x_{1}+2x_{1}x_{0}x_{1}-x_{1}x_{1}x_{0},& V_{12}&=-x_{1}x_{0}x_{0}+2x_{0}x_{1}x_{0}-x_{0}x_{0}x_{1}.\\
\end{align*}
On a alors: $$\dim(\Prim{\WMat}{3})=12\neq 10=\langle 1-\frac{1}{F_{\WMat}},h^{3}\rangle. $$
L'algèbre de Hopf $\WMat$ n'est donc pas colibre.

\subsubsection{Utilisation du produit et du coproduit dans le calcul de l'antipode}
Par itération du coproduit, pour tout entier $n\in\N^{*}$, tout mot $w\in(\WMat)_{n}$ et tout entier $k\in\llbracket 0,n-1\rrbracket$ on obtient:
\begin{equation}\label{coproditere}
 \Delta^{(k)}(w)=\sum\limits_{I_{1}+\dots+I_{k+1}=\{1,\dots,|w|\}}\pack(w[I_{1}])\otimes \pack(w[I_{2}]/w[I_{1}])\otimes\dots\otimes \pack(w[I_{k+1}]/w[I_{1}+\dots+I_{k}]).
\end{equation}
\begin{prop}\label{propantipodegen}
	Soit $n\in\N^{*}$ un entier et $w\in(\WMat)_{n}$ un mot tassé. En $w$, l'antipode vaut:
	$$S(w)=\sum_{k=1}^{|w|}(-1)^{k} \sum_{\substack{I_{1}+\dots+I_{k}=\{1,\dots,|w|\} \\ \forall j,~I_{j}\neq\emptyset}}  \pack(w[I_{1}]\ast w[I_{2}]/w[I_{1}]\ast\dots\ast w[I_{k}]/w[I_{1}+\dots+I_{k-1}]).$$
\end{prop}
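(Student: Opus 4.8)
The plan is to obtain the formula from Takeuchi's universal expression for the antipode and then to collapse the iterated product of packed factors into the single packing appearing in the statement. Since $\WMat$ is graded by the length and connected (its degree-$0$ part is $\K$), its antipode is the two-sided convolution inverse of the identity, and Takeuchi's formula applies: for a homogeneous $w$ of degree $n=|w|\geq 1$,
\[
S(w)=\sum_{k=1}^{|w|}(-1)^{k}\,\ast^{(k-1)}\!\big(\tilde{\Delta}^{(k-1)}(w)\big),
\]
where $\tilde{\Delta}^{(k-1)}$ denotes the $(k-1)$-fold iterate of the reduced coproduct (which lands in $(\ker\varepsilon)^{\otimes k}$ and therefore vanishes for $k>|w|$) and $\ast^{(k-1)}$ the $(k-1)$-fold iterate of the product. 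First I would record this, so that the whole proof reduces to computing $\tilde{\Delta}^{(k-1)}$ and applying the iterated product.

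Next I would identify the reduced iterate. By formula~\eqref{coproditere}, $\Delta^{(k-1)}(w)$ is the sum over ordered decompositions $I_{1}+\dots+I_{k}=\{1,\dots,|w|\}$ of $\pack(w[I_{1}])\otimes\dots\otimes\pack(w[I_{k}]/w[I_{1}+\dots+I_{k-1}])$. Since $\tilde{\Delta}^{(k-1)}$ equals the projection $\pi^{\otimes k}\circ\Delta^{(k-1)}$ onto $(\ker\varepsilon)^{\otimes k}$, and an empty block $I_{j}$ produces the scalar factor $\pack(\emptyset)=1$ that $\pi$ kills, passing to the reduced iterate amounts to keeping only the decompositions with every $I_{j}\neq\emptyset$. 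Applying $\ast^{(k-1)}$ then turns each surviving summand into $\pack(w[I_{1}])\ast\pack(w[I_{2}]/w[I_{1}])\ast\dots\ast\pack(w[I_{k}]/w[I_{1}+\dots+I_{k-1}])$, so it only remains to match this with the stated $\pack\big(w[I_{1}]\ast w[I_{2}]/w[I_{1}]\ast\dots\big)$.

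The key step is therefore a compatibility lemma between $\pack$ and the shifted concatenation: for any two words $u,v$ read over $X$,
\[
\pack(u\ast v)=\pack(u)\ast\pack(v),\qquad u\ast v=u\,T_{\sup(u)}(v).
\]
The reason is that in $u\,T_{\sup(u)}(v)$ the nonzero letters inherited from $u$ lie in $\IAlph(u)\setminus\{0\}\subseteq\{1,\dots,\sup(u)\}$, while those coming from $v$ are shifted to values strictly greater than $\sup(u)$; the two sets of nonzero values are thus disjoint and totally ordered, and $T_{\sup(u)}$ preserves the relative order of the letters of $v$. Consequently the relabelling carried out by $\pack$ acts on the $u$-part exactly as $\pack(u)$ and on the $v$-part exactly as $\pack(v)$ shifted by $\sup(\pack(u))=|\IAlph(u)\setminus\{0\}|$, which is precisely $\pack(u)\ast\pack(v)$. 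A straightforward induction on the number of factors upgrades this to $\pack(a_{1})\ast\dots\ast\pack(a_{k})=\pack(a_{1}\ast\dots\ast a_{k})$; applying it with $a_{j}=w[I_{j}]/w[I_{1}+\dots+I_{j-1}]$ rewrites every summand of $\ast^{(k-1)}(\tilde{\Delta}^{(k-1)}(w))$ as the single packing in the statement, and combining with Takeuchi's formula finishes the proof.

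I expect the only real obstacle to be the bookkeeping in this lemma, specifically checking that the $0$-letters stay inert: they remain at $0$ under both $T_{s}$ and $\pack$, so they never interfere with the relabelling, and that $\sup(\pack(u))$ indeed equals the number of distinct nonzero letters of $u$. Everything else is routine. As an alternative avoiding Takeuchi, one could induct on $|w|$ directly from $\ast\circ(S\otimes\mathrm{id})\circ\Delta=\varepsilon$, isolating $S(w)$ and substituting the inductive hypothesis; this yields the same formula but with heavier combinatorial rearrangement, so I would keep the Takeuchi argument as the primary route.
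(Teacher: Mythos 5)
Your proposal is correct and follows essentially the same route as the paper: the paper obtains Takeuchi's formula by unfolding the recursion $S(w)=-\sum S(\pack(w[I_{1}]))\ast\pack(w[I_{2}]/w[I_{1}])$ coming from $\ast\circ(S\ot Id)\circ\De=\eta\circ\varepsilon$, which is exactly the identity you quote directly, and then collapses the iterated product into a single $\pack$. The only difference is that you make explicit the compatibility $\pack(u\ast v)=\pack(u)\ast\pack(v)$ that the paper uses without comment in its last equality, which is a welcome addition rather than a divergence.
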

\begin{proof}
	Soient $n\in\N^{*}$ un entier et $w\in(\WMat)_{n}$ un mot tassé. Pour démontrer la proposition énoncée on utilise le coproduit itéré explicité par la formule \ref{coproditere} ainsi que le fait que $S$ soit l'inverse de $Id$ pour le produit de convolution. On obtient alors:
	\begin{align*}
	S(w)&=-\sum_{\substack{I_{1}+I_{2}=\{1,\dots,|w|\}\\ I_{2}\neq\emptyset}}S(\pack(w[I_{1}]))\ast \pack(w[I_{2}]/w[I_{1}])\\
	&=\sum_{\substack{I_{1}+I_{2}+I_{3}=\{1,\dots,|w|\} \\ I_{2}\neq\emptyset\\ I_{3}\neq\emptyset}}S(\pack(w[I_{1}]))\ast \pack(w[I_{2}]/w[I_{1}])\ast \pack(w[I_{3}]/w[I_{1}+I_{2}])\\
	&=\sum_{k=1}^{|w|}(-1)^{k} \sum_{\substack{I_{1}+\dots+I_{k}=\{1,\dots,|w|\} \\ I_{j}\neq\emptyset}}  \pack(w[I_{1}])\ast\dots\ast \pack(w[I_{k}]/w[I_{1}+\dots+I_{k-1}])\\
	&=\sum_{k=1}^{|w|}(-1)^{k} \sum_{\substack{I_{1}+\dots+I_{k}=\{1,\dots,|w|\} \\ I_{j}\neq\emptyset}} \pack(w[I_{1}]\ast w[I_{2}]/w[I_{1}]\ast\dots\ast w[I_{k}]/w[I_{1}+\dots+I_{k-1}]).
	\end{align*}
	D'où le résultat énoncé.
\end{proof} 
L'image de certains mots tassés par l'antipode est remarquable comme l'illustre la proposition suivante. 
\begin{prop}\label{propantipode}
\begin{enumerate}
\item\label{antipodea} $S(\underbrace{x_{0}\dots x_{0}}_{n \text{ fois}})=(-1)^{n} \underbrace{x_{0}\dots x_{0}}_{n \text{ fois}}.$

\item\label{antipodeb} $S(\underbrace{x_{1}\dots x_{1}}_{n \text{ fois}})=(-1)^{n+1} \sum\limits_{k=1}^{n}\displaystyle\binom{n}{k} (-1)^{k} \underbrace{x_{1}\dots x_{1}}_{k \text{ fois}}\underbrace{x_{0}\dots x_{0}}_{n-k \text{ fois}}.$

\item\label{antipodec} Soit $(\alpha_{1},\dots,\alpha_{n})\in(\N^{*})^{n}$. On a : $$S(\underbrace{x_{1}\dots x_{1}}_{\alpha_{n} \text{ fois}}\dots\underbrace{x_{n}\dots x_{n}}_{\alpha_{1} \text{ fois}}) =(-1)^{(n+\sum\limits_{j=1}^{n}\alpha_{j})}\sum_{s=n}^{\alpha_{1}+\dots+\alpha_{n}}\sum_{\substack{k_{1}+\dots+k_{n}=s\\1\leq k_{j}\leq \alpha_{j}}}(-1)^{s}\binom{\alpha_{1}}{k_{1}}\dots\binom{\alpha_{n}}{k_{n}}\omega_{\alpha,k}$$
où 
$$\omega_{\alpha,k}=\underbrace{x_{1}\dots x_{1}}_{k_{1} \text{ fois}}\underbrace{x_{0}\dots x_{0}}_{\alpha_{1}-k_{1} \text{ fois}}\dots\underbrace{x_{n}\dots x_{n}}_{k_{n} \text{ fois}}\underbrace{x_{0}\dots x_{0}}_{\alpha_{n}-k_{n} \text{ fois}}.$$

\item\label{antipoded} $S(x_{1}\dots x_{n})=(-1)^{n}x_{1}\dots x_{n}.$

\item\label{antipodee} $S(x_{n}\dots x_{1})=\sum\limits_{k=1}^{n}(-1)^{k}\sum\limits_{\alpha=(\alpha_{1},\dots,\alpha_{k})\models n} \frac{n!}{\alpha_{1}!\dots\alpha_{k}!}w_{n,\alpha}$\\
où  $$w_{n,\alpha}=x_{\alpha_{1}}\dots x_{1}x_{(\alpha_{1}+\alpha_{2})}\dots x_{(\alpha_{1}+1)}\dots x_{(\alpha_{1}+\dots+\alpha_{k})}\dots x_{(\alpha_{1}+\dots+\alpha_{k-1}+1)}.$$

\item\label{antipodef} $\forall i\in\llbracket 1,n\rrbracket$ on a : $$S(x_{i}\dots x_{1}x_{i+1}\dots x_{n})=\sum\limits_{k=1}^{i}\sum\limits_{\alpha=(\alpha_{1},\dots,\alpha_{k})\models i}(-1)^{(n+i+k)} \frac{i!}{\alpha_{1}!\dots\alpha_{k}!} x_{1}\dots x_{n-i}T_{n-i}(w_{n,\alpha})$$
où $$T_{n-i}(w_{n,\alpha})=x_{(\alpha_{1}+n-i)}\dots x_{(1+n-i)}\dots x_{(\alpha_{1}+\dots+\alpha_{k}+n-i)}\dots x_{(\alpha_{1}+\dots+\alpha_{k-1}+1+n-i)}.$$

\item\label{antipodeg} $\forall i\in\llbracket 1,n\rrbracket$ on a : $$S(x_{1}\dots x_{n-i}x_{n}\dots x_{n-i+1})=\sum\limits_{k=1}^{i}\sum\limits_{\alpha=(\alpha_{1},\dots,\alpha_{k})\models i}(-1)^{(n+i+k)} \frac{i!}{\alpha_{1}!\dots\alpha_{k}!} w_{i,\alpha}x_{i+1}\dots x_{n}.$$
\end{enumerate}
\end{prop}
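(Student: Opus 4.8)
The common engine for all seven identities is Proposition~\ref{propantipodegen} together with the fact that the antipode $S$ is an algebra anti-morphism for $\ast$, i.e. $S(u\ast v)=S(v)\ast S(u)$. The plan is to treat items \ref{antipodea}, \ref{antipodeb}, \ref{antipoded} and \ref{antipodee} more or less directly, and then to obtain \ref{antipodec}, \ref{antipodef} and \ref{antipodeg} for free by anti-multiplicativity, once suitable $\ast$-factorisations have been recognised.

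First I would dispose of the two ``power of a primitive'' cases \ref{antipodea} and \ref{antipoded}. A single letter is primitive, so $\Delta(x_0)=x_0\ot1+1\ot x_0$ and $\Delta(x_1)=x_1\ot1+1\ot x_1$, whence $S(x_0)=-x_0$ and $S(x_1)=-x_1$. Since $\sup(x_0)=0$ one has $x_0\ast x_0=x_0x_0$, so $\underbrace{x_0\cdots x_0}_{n}=x_0^{\ast n}$; likewise $x_1\ast x_1=x_1x_2$ and by induction $x_1\cdots x_n=x_1^{\ast n}$. Applying the anti-morphism property to a $\ast$-power of a single primitive gives $S(x_0^{\ast n})=(-x_0)^{\ast n}=(-1)^nx_0^{\ast n}$ and $S(x_1^{\ast n})=(-x_1)^{\ast n}=(-1)^nx_1^{\ast n}$, which are exactly \ref{antipodea} and \ref{antipoded}.

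Next, item \ref{antipodeb}: here $\underbrace{x_1\cdots x_1}_{n}$ is not a $\ast$-power of a primitive, so I would expand $S$ via Proposition~\ref{propantipodegen}. The key simplification is that in any composition $I_1+\dots+I_k=\{1,\dots,n\}$ of $w=\underbrace{x_1\cdots x_1}_{n}$ one has $w[I_1]=\underbrace{x_1\cdots x_1}_{|I_1|}$, while $x_1\in\AAlph(w[I_1])$ forces every later block to be zeroed; the $\ast$-product then collapses to $\underbrace{x_1\cdots x_1}_{|I_1|}\underbrace{x_0\cdots x_0}_{n-|I_1|}$, which is already packed. Thus each term depends only on $m:=|I_1|$, and the coefficient of $\underbrace{x_1\cdots x_1}_{m}\underbrace{x_0\cdots x_0}_{n-m}$ equals $\binom{n}{m}\sum_{k\ge1}(-1)^k\operatorname{Surj}(n-m,k-1)$, where $\operatorname{Surj}(r,j)$ counts surjections onto $j$ parts. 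Using the classical identity $\sum_{j\ge0}(-1)^j\operatorname{Surj}(r,j)=(-1)^r$ this coefficient becomes $(-1)^{n+1+m}\binom{n}{m}$, giving \ref{antipodeb}. Item \ref{antipodec} then follows without further combinatorics: one checks $\underbrace{x_1\cdots x_1}_{\alpha_n}\cdots\underbrace{x_n\cdots x_n}_{\alpha_1}=x_1^{\alpha_n}\ast x_1^{\alpha_{n-1}}\ast\dots\ast x_1^{\alpha_1}$, so by anti-multiplicativity $S$ of it is the reversed $\ast$-product of the $S(x_1^{\alpha_j})$ from \ref{antipodeb}; expanding that product reproduces $\omega_{\alpha,k}$ with the announced signs and $\binom{\alpha_j}{k_j}$ factors.

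Finally, items \ref{antipodee}, \ref{antipodef} and \ref{antipodeg} concern permutation words. For \ref{antipodee} I would again use Proposition~\ref{propantipodegen}, the point being that $w=x_n\cdots x_1$ has pairwise distinct letters, so the contraction never zeroes anything and each term is simply $\pack(w[I_1]\ast\dots\ast w[I_k])$. The main obstacle is the structural lemma that this packed word depends only on the block sizes $\alpha=(|I_1|,\dots,|I_k|)$ and equals $w_{n,\alpha}$: because of the shifts in $\ast$, every entry of block $j$ exceeds $\sup$ of the product of the first $j-1$ blocks, hence lies strictly above all earlier entries, and each block is a decreasing subword, so packing sends block $j$ onto the decreasing run $x_{\alpha_1+\dots+\alpha_j}\cdots x_{\alpha_1+\dots+\alpha_{j-1}+1}$. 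Counting the ordered set partitions with prescribed block sizes gives the multinomial $\frac{n!}{\alpha_1!\cdots\alpha_k!}$, yielding \ref{antipodee}. Items \ref{antipodef} and \ref{antipodeg} then reduce to \ref{antipoded} and \ref{antipodee} by anti-multiplicativity, since $x_i\cdots x_1x_{i+1}\cdots x_n=(x_i\cdots x_1)\ast(x_1\cdots x_{n-i})$ and $x_1\cdots x_{n-i}x_n\cdots x_{n-i+1}=(x_1\cdots x_{n-i})\ast(x_i\cdots x_1)$; applying $S(u\ast v)=S(v)\ast S(u)$ and computing the single remaining shifted concatenation produces the factors $x_1\cdots x_{n-i}T_{n-i}(w_{n,\alpha})$ and $w_{i,\alpha}x_{i+1}\cdots x_n$ with sign $(-1)^{n+i+k}$ (noting $(-1)^{n-i}=(-1)^{n+i}$).
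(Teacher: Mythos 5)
Your proof is correct, and it verifies all seven formulas, but it does not follow the paper's route everywhere. The paper proves \ref{antipodea}, \ref{antipodeb} and \ref{antipodee} by induction on the length of the word using only the convolution identity $m\circ(S\ot Id)\circ\De=\eta\circ\varepsilon$, whereas you obtain \ref{antipodeb} and \ref{antipodee} by expanding the closed formula of the proposition \ref{propantipodegen} and then resumming; for \ref{antipodeb} this costs you the classical alternating identity $\sum_{j}(-1)^{j}j!\,S(r,j)=(-1)^{r}$ on surjection numbers, which the inductive argument avoids, but in exchange your treatment of \ref{antipodee} isolates cleanly the structural fact that $\pack(w[I_{1}]\ast\dots\ast w[I_{k}])$ depends only on the block sizes and equals $w_{n,\alpha}$, which the paper leaves implicit. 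You also derive \ref{antipodea} and \ref{antipoded} from the primitivity of a single letter together with anti-multiplicativity on $\ast$-powers, while the paper gets \ref{antipoded} as the special case $\alpha=(1,\dots,1)$ of \ref{antipodec}; both are immediate. For \ref{antipodec}, \ref{antipodef} and \ref{antipodeg} your argument coincides with the paper's: recognize the $\ast$-factorisations and apply $S(u\ast v)=S(v)\ast S(u)$, with the sign bookkeeping $(-1)^{n-i}=(-1)^{n+i}$ exactly as you state. The only point worth flagging is notational rather than mathematical: in \ref{antipodef} the word being shifted is $w_{i,\alpha}$ (with $\alpha\models i$), so the object written $T_{n-i}(w_{n,\alpha})$ in the statement is really $T_{n-i}(w_{i,\alpha})$, as your computation in fact produces; your proof is consistent with the displayed right-hand side.
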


\begin{proof}
	Pour démontrer les points \ref{antipodea}, \ref{antipodeb} et \ref{antipodee}, il suffit d'effectuer une récurrence sur la longueur du mot en considérant le fait que $S$ est l'inverse de $Id$ pour le produit de convolution. Les points \ref{antipodec}, \ref{antipodee} et \ref{antipodeg} sont montrés grâce au fait que l'antipode soit un anti-morphisme d'algèbres. La formule \ref{antipoded} n'est que le cas particulier $(\alpha_{1},\dots,\alpha_{n})=(1,\dots,1)$ de la formule \ref{antipodec}.
\end{proof}

\Ex{}
\begin{align*}
	S(x_{1}x_{1}x_{2}x_{2}x_{2})=&x_{1}x_{1}x_{1}x_{2}x_{2}-2x_{1}x_{1}x_{1}x_{2}x_{0}-3x_{1}x_{1}x_{0}x_{2}x_{2}+6x_{1}x_{1}x_{0}x_{2}x_{0}\\
	+&3x_{1}x_{0}x_{0}x_{2}x_{2}-6x_{1}x_{0}x_{0}x_{2}x_{0},\\
	S(x_{2}x_{1}x_{3}x_{4})=&-x_{1}x_{2}x_{4}x_{3}+2x_{1}x_{2}x_{3}x_{4},\\
	S(x_{1}x_{2}x_{4}x_{3})=&-x_{2}x_{1}x_{3}x_{4}+2x_{1}x_{2}x_{3}x_{4}.
\end{align*} 

\subsection{Algèbre de Hopf duale $\dual{\WMat}$}\label{operationsdualWMat}
L'algèbre de Hopf $\WMat$ étant graduée connexe, son dual gradué est une algèbre de Hopf que l'on notera $\dual{\WMat}$. On considère $(Z_{w})_{w\in \WMat}$ la base duale des mots tassés. Par transposition des opérations de $\WMat$ on obtient celles de $\dual{\WMat}$. Commençons par la structure la plus facile à décrire, à savoir celle de cogèbre.
\begin{prop}
	Le coproduit de $\dual{\WMat}$ est l'application suivante:
	$$\De:\left\{\begin{array}{rcl} \dual{\WMat} &\longrightarrow & \dual{\WMat}\ot\dual{\WMat}  \\ 
	Z_{w} &\longrightarrow & \Delta(Z_{w})=\sum\limits_{i=0}^{k}Z_{w_{1}\ast\dots\ast w_{i}}\otimes Z_{w_{i+1}\ast\dots\ast w_{k}} \end{array}\right.$$
	où $w_{1}\ast\dots\ast w_{k}$ est l'unique décomposition de $w$ en produit d'éléments irréductibles de $\WMat$. 
\end{prop}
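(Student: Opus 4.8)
The plan is to compute directly the transpose of the shifted-concatenation product $\ast$, which by definition is the coproduct of $\dual{\WMat}$, and to read off the answer from the freeness of $\WMat$. First I would fix the duality pairing $\langle Z_w, v\rangle = \delta_{w,v}$ for $w,v\in\PX$, extended bilinearly, together with the induced pairing $\langle Z_a \ot Z_b, u \ot v\rangle = \delta_{a,u}\delta_{b,v}$ on the tensor squares. Because $\WMat$ is graded and connected with finite-dimensional homogeneous components, the transpose of $\ast$ does land in $\dual{\WMat}\ot\dual{\WMat}$, so writing $\Delta := \ast^{*}$ is legitimate and, expanding $\Delta(Z_w)$ in the basis $(Z_u \ot Z_v)_{u,v}$, its coefficients are
\[
\langle \Delta(Z_w), u \ot v\rangle = \langle Z_w, u \ast v\rangle = \delta_{w,\, u\ast v}.
\]
Hence $\Delta(Z_w) = \sum_{(u,v):\, u\ast v = w} Z_u \ot Z_v$, and the whole problem reduces to describing the pairs $(u,v)$ of packed words whose product is $w$.

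Second, I would invoke the freeness result recalled above: $\WMat$ is freely generated as an associative algebra by $\Irr(\WMat)$, so every packed word admits a unique factorization $w = w_1 \ast \cdots \ast w_k$ with each $w_j$ irreducible, and the product $u \ast v$ is simply the concatenation of the irreducible factorizations of $u$ and of $v$. Thus if $u = u_1 \ast \cdots \ast u_p$ and $v = v_1 \ast \cdots \ast v_q$ are the factorizations of $u$ and $v$, the identity $u \ast v = w$ reads $u_1 \ast \cdots \ast u_p \ast v_1 \ast \cdots \ast v_q = w_1 \ast \cdots \ast w_k$. By uniqueness of factorization this forces $p+q=k$ together with $u_j = w_j$ for $1\le j\le p$ and $v_j = w_{p+j}$ for $1\le j\le q$; conversely every such splitting yields a valid pair. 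Therefore the pairs $(u,v)$ with $u\ast v = w$ are exactly
\[
\big(u,v\big) = \big(w_1 \ast \cdots \ast w_i,\; w_{i+1} \ast \cdots \ast w_k\big), \qquad i\in\{0,\dots,k\},
\]
the boundary cases $i=0$ and $i=k$ giving the empty product $1$ on the left, resp.\ on the right. Substituting these $k+1$ pairs into the formula of the first step gives precisely
\[
\Delta(Z_w) = \sum_{i=0}^{k} Z_{w_1 \ast \cdots \ast w_i} \ot Z_{w_{i+1} \ast \cdots \ast w_k},
\]
which is the claimed deconcatenation coproduct.

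The only genuinely non-formal input is the uniqueness of the irreducible factorization, which is exactly the freeness statement of Duchamp--Hoang-Nghia--Tanasa recalled before; once that is granted, the argument is a routine transposition. The points requiring mild care are therefore bookkeeping ones: checking that the grading makes $\ast^{*}$ well defined on the graded dual, handling the empty-product boundary terms $i=0,k$ so that the unit $Z_1$ appears correctly, and making sure the factorizations of $u$ and $v$ concatenate to the factorization of $w$ with no interaction at the junction --- which is guaranteed precisely because $u\ast v$ is itself a product of irreducibles and the generators are free.
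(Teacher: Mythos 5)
Votre démonstration est correcte et suit essentiellement la même démarche que celle de l'article : transposition du produit $\ast$ pour obtenir $\langle \De(Z_{w}), u\ot v\rangle=\delta_{w,u\ast v}$, puis identification des couples $(u,v)$ tels que $u\ast v=w$ grâce à l'unicité de la factorisation en irréductibles de \cite{Duchamp2013}. Vous explicitez simplement un peu plus le recollement des factorisations de $u$ et $v$, ce que l'article laisse implicite.
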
 
\begin{proof}
	Soient $w, m_{1}, m_{2}\in \WMat$ des mots tassés. On a :
	\begin{align*}
	\Delta(Z_{w})(m_{1}\otimes w_{2})&=(Z_{w}\circ\ast)(m_{1}\otimes m_{2})=Z_{w}(m_{1}\ast m_{2})\\
	&=Z_{w}(m_{1}T_{\sup(m_{1})}m_{2})=\begin{cases} 
	1 & \mbox{ si } w=m_{1}\ast m_{2},\\
	0 & \mbox{ sinon. } 
	\end{cases}
	\end{align*}
	Or, d'après \cite{Duchamp2013}, on sait que : 
	$$\text{il existe un unique } k\in\N \text{ et un unique } (w_{1},\dots,w_{k})\in \Irr(\WMat)^{k} \text{ tels que } w=w_{1}\ast\dots\ast w_{k}.$$
	Donc $$\Delta(Z_{w})=\sum\limits_{i=0}^{k}Z_{w_{1}\ast\dots\ast w_{i}}\otimes Z_{w_{i+1}\ast\dots\ast w_{k}}.$$ 
\end{proof}
	\Ex{}
	\begin{align*}
	\De(Z_{x_{2}x_{1}x_{3}})&=1\ot Z_{x_{2}x_{1}x_{3}}+Z_{x_{2}x_{1}}\ot Z_{x_{1}}+ Z_{x_{2}x_{1}x_{3}}\ot1,\\ \De(Z_{x_{2}x_{1}x_{2}})&=Z_{x_{2}x_{1}x_{2}}\ot 1 + 1\ot Z_{x_{2}x_{1}x_{2}}.
	\end{align*}
	
Examinons maintenant la structure d'algèbre de $\dual{\WMat}$.
Soient $w_{1}, w_{2}\in\WMat$ des mots tassés. On définit les conditions suivantes:
\begin{align*}
	C_{1}~:~& x_{0}\notin \AAlph(w_{1})\cup \AAlph(w_{2}),\\
	C_{2}~:~& \bigg(\AAlph(w_{1})\setminus\{x_{0}\}\bigg)\cap \bigg(\AAlph(w_{2})\setminus\{x_{0}\}\bigg)\neq\emptyset \mbox{ et } x_{0}\in \AAlph(w_{1})\cup \AAlph(w_{2}), \\
	C_{3}~:~& w_{1}=\underbrace{x_{0}\dots x_{0}}_{n_{1} \text{ fois}}, \\
	C_{4}~:~& w_{2}=\underbrace{x_{0}\dots x_{0}}_{n_{2} \text{ fois}}. 
\end{align*}

Définissons maintenant la structure d'algèbre de $\dual{\WMat}$.
\begin{prop}
	Le produit de $\dual{\WMat}$ est l'application suivante:
	$$m:\left\{\begin{array}{rcl} \dual{\WMat}\ot\dual{\WMat} &\longrightarrow & \dual{\WMat}  \\ 
	Z_{w_{1}}\ot Z_{w_{2}} &\longrightarrow & Z_{w_{1}}Z_{w_{2}} \end{array}\right.$$
	telle que $$Z_{w_{1}}Z_{w_{2}}= \begin{cases} 
		\sum\limits_{\substack{\tau\in Bat(s_{1},s_{2}), \\\mu\in Bat(n_{1},n_{2})}}Z_{\tau\circ(w_{1}\ast w_{2})\circ\mu^{-1}} & \mbox{ si } C_{1},\\
		\sum\limits_{\substack{\tau\in Bat(s_{1},s_{2}), \\ \mu\in Bat(n_{1},n_{2})}}Z_{\tau\circ(w_{1}\tilde{w_{2}})\circ\mu^{-1}} & \mbox{ si } C_{2}, \\
		\sum\limits_{\mu\in Bat(n_{1},n_{2})}Z_{(w_{1}\ast w_{2})\circ\mu^{-1}} & \mbox{ si } C_{3}, \\
		\sum\limits_{\mu\in Bat(n_{1},n_{2})}Z_{(w_{1}\tilde{w_{2}})\circ\mu^{-1}} & \mbox{ si } C_{4},
	\end{cases}$$
	où pour $i\in\{1,2\}$, 
	\begin{align*}
	n_{i}=&|w_{i}|,\\
	s_{i}=&\sup(w_{i}),\\
	Bat(n_{1},n_{2})=&\{\sigma\in\Sn{n_{1}+n_{2}}\text{, } \sigma(1)<\dots<\sigma(n_{1}) \text{ et } \sigma(n_{1}+1)<\dots<\sigma(n_{1}+n_{2})\},
	\end{align*}
	$$\tilde{w_{2}}=\sum_{w\in\Gamma_{(w_{1},w_{2})}}w$$
	avec \begin{align*}
	&\Gamma_{(w_{1},w_{2})} = \\
	&\Bigg\{ w\in\WMat \text{, } |w|=n_{2} \text{, } w[\{i\}]= \begin{cases} 
	T_{s_{1}}(w_{2}[\{i\}]) & \mbox{ si } w_{2}[\{i\}]\neq x_{0}, \\
	x_{j} \mbox{ avec } j\in \IAlph(w_{1})\cup\{0\} & \mbox{ sinon} 
	\end{cases} \Bigg\}.
	\end{align*}
	et $w_{1}\tilde{w_{2}}$ désigne la concaténation de $w_{1}$ et de $\tilde{w_{2}}$.
\end{prop}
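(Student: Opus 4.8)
The plan is to obtain the product on $\dual{\WMat}$ as the transpose of the coproduct of $\WMat$ and then to read off the four cases combinatorially. Since $(Z_w)_{w\in\WMat}$ is the dual basis, for any packed words $w_1,w_2,w$ one has
$$\langle Z_{w_1}Z_{w_2},\, w\rangle=\langle Z_{w_1}\ot Z_{w_2},\, \Delta(w)\rangle,$$
so the coefficient of $Z_w$ in $Z_{w_1}Z_{w_2}$ is exactly the multiplicity of the tensor $w_1\ot w_2$ in $\Delta(w)$. By the very definition of $\Delta$, this multiplicity is the number of ordered decompositions $\{1,\dots,|w|\}=I+J$ with $\pack(w[I])=w_1$ and $\pack(w[J]/w[I])=w_2$. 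The whole statement therefore reduces to counting, for fixed $w_1,w_2$ and each $w$, these admissible decompositions, and to matching the count with the indexing set of the announced formula.

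First I would set up the reconstruction dictionary. A decomposition $(I,J)$ of a candidate $w$ is the same data as: a shuffle $\mu\in Bat(n_1,n_2)$ recording the interleaving of the positions of $I$ and $J$, together with the letter values carried by $w$ on $I$ and on $J$. I would split the nonzero alphabet of $w$ into the letters occurring only on $I$, only on $J$, and on both (the \emph{shared} letters), and note that $\pack(w[I])=w_1$ forces $w[I]$ to use exactly $s_1$ distinct nonzero letters, while the contraction $w[J]/w[I]$ turns each shared letter and each genuine $x_0$ on $J$ into $x_0$, so that $\pack(w[J]/w[I])=w_2$ forces the non-shared nonzero letters of $w[J]$ to be $s_2$ in number. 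The heart of the matter is that the $x_0$'s of $w_2$ encode a loss of information — each may come either from an honest $x_0$ of $w$ or from a letter shared with $w_1$ — and that reversing this contraction is precisely what the set $\Gamma_{(w_1,w_2)}$ and the formal sum $\tilde{w_2}$ record.

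Then I would run the four cases. In $C_1$ no $x_0$ occurs, hence no contraction: $\AAlph(w[I])$ and $\AAlph(w[J])$ are disjoint, $w$ has $s_1+s_2$ distinct letters, and a decomposition amounts to choosing, independently, a position shuffle $\mu$ and a letter-value shuffle $\tau\in Bat(s_1,s_2)$ of the two blocks $\{1,\dots,s_1\}$ and $\{s_1+1,\dots,s_1+s_2\}$ of $w_1\ast w_2$; the assignment $(\tau,\mu)\mapsto\tau\circ(w_1\ast w_2)\circ\mu^{-1}$ and its induced decomposition give the sought bijection. In $C_3$, where $w_1=x_0\dots x_0$ has no nonzero letter, no sharing can occur, so only the position shuffle survives and one recovers $\sum_\mu Z_{(w_1\ast w_2)\circ\mu^{-1}}$. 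Case $C_4$, where $w_2=x_0\dots x_0$, is the mirror situation: every position of $w_2$ is an $x_0$ and may be restored to any letter of $\IAlph(w_1)\cup\{0\}$, which is exactly $\Gamma_{(w_1,w_2)}$, and since $s_2=0$ only the position shuffle remains. Finally $C_2$ is the generic case combining all three mechanisms: shift the nonzero letters of $w_2$ by $s_1$ (built into $\tilde{w_2}$ through $T_{s_1}$), restore each $x_0$ of $w_2$ to a letter of $\IAlph(w_1)\cup\{0\}$ (the sum over $\Gamma_{(w_1,w_2)}$), then interleave positions by $\mu$ and the two letter-blocks by $\tau$. I would also check that $C_1,\dots,C_4$ are exhaustive for nonempty $w_1,w_2$ and that on the overlap $C_3\cap C_4$ (both words all-zero) the two formulas agree, so the case distinction is consistent.

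The main obstacle is the verification in $C_2$ that the map $(I,J)\mapsto(\tau,\mu,\gamma)$, with $\gamma\in\Gamma_{(w_1,w_2)}$ the term restoring the contracted letters, is a genuine bijection and not merely a surjection: one must confirm that no admissible decomposition is counted twice and that each triple $(\tau,\mu,\gamma)$ indeed yields a packed word whose decomposition returns that same triple. This needs the check that the restored letters, lying in the block $\{1,\dots,s_1\}$, never create a new nonzero letter, so that the reconstructed word stays packed with nonzero alphabet exactly $\{1,\dots,s_1+s_2\}$, and that the distinction between ``letter shared with $w_1$'' and ``honest $x_0$'' is faithfully recorded by $\gamma$. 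Once this bookkeeping is settled, comparing the coefficients of $Z_w$ on both sides yields the four announced formulas.
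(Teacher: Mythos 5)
Your proposal is correct and follows essentially the same route as the paper: both dualize the coproduct of $\WMat$, identify the coefficient of $Z_{w}$ in $Z_{w_{1}}Z_{w_{2}}$ with the multiplicity of $w_{1}\ot w_{2}$ in $\De(w)$, and then resolve the count into the four cases $C_{1},\dots,C_{4}$. In fact you go further than the paper, whose proof stops at the sum of Kronecker deltas and asserts the case formulas directly; your explicit bijection $(I,J)\leftrightarrow(\tau,\mu,\gamma)$ and the attention to the loss of information in the contraction supply exactly the bookkeeping the paper leaves implicit.
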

\begin{proof}
	 Soient $w_{1}, w_{2},w\in\WMat$ des mots tassés. Pour $i\in\{1,2\}$, on pose: $n_{i}=|w_{i}|$ et $s_{i}=\sup(w_{i})$. 
	 On a alors:\begin{align*}
			Z_{w_{1}}Z_{w_{2}}(w)&=(Z_{w_{1}}\ot Z_{w_{2}})\circ\De(m)\\
			&=\sum_{I+J =\{1,\dots,|w|\}}Z_{w_{1}}(\pack(w[I]))Z_{w_{2}}(\pack(w[J]/w[I]))\\
			&=\sum_{\substack{I+J =\{1,\dots,|w|\},\\ |I|=n_{1},\\ |J|=n_{2}}}Z_{w_{1}}(\pack(w[I]))Z_{w_{2}}(\pack(w[J]/w[I]))\\
			&=\sum_{\substack{I+J =\{1,\dots,|w|\},\\ |I|=n_{1},\\ |J|=n_{2}}}\delta_{w_{1},\pack(w[I])}\delta_{w_{2},\pack(w[J]/w[I])}\\
			&= \begin{cases} 
			\sum\limits_{\substack{\tau\in Bat(s_{1},s_{2}), \\\mu\in Bat(n_{1},n_{2})}}Z_{\tau\circ(w_{1}\ast w_{2})\circ\mu^{-1}}(m) & \mbox{ si } C_{1},\\
			\sum\limits_{\substack{\tau\in Bat(s_{1},s_{2}), \\ \mu\in Bat(n_{1},n_{2})}}Z_{\tau\circ(w_{1}\tilde{w_{2}})\circ\mu^{-1}}(m) & \mbox{ si } C_{2}, \\
			\sum\limits_{\mu\in Bat(n_{1},n_{2})}Z_{(w_{1}\ast w_{2})\circ\mu^{-1}}(m) & \mbox{ si } C_{3}, \\
			\sum\limits_{\mu\in Bat(n_{1},n_{2})}Z_{(w_{1}\tilde{w_{2}})\circ\mu^{-1}}(m) & \mbox{ si } C_{4}.
			\end{cases}
	 \end{align*}
\end{proof}
\Ex{} Voici quelques exemples de produits d'éléments de $\dual{\WMat}$. On illustre toutes les configurations possibles. On les présente dans l'ordre suivant: deux mots vérifiant la condition $C_{1}$, puis la condition $C_{3}$, puis la condition $C_{4}$ et enfin, la  condition la plus complexe, $C_{2}$. Dans le dernier exemple, pour une meilleure lisibilité, on ne développe pas le calcul du fait du nombre de termes dans les produits de battage.
 \begin{align*}
 Z_{x_{1}}Z_{x_{1}x_{1}}&=Z_{x_{1}\shuffle x_{2}x_{2}}+Z_{x_{2}\shuffle x_{1}x_{1}}\\
 &=Z_{x_{1}x_{2}x_{2}}+Z_{x_{2}x_{1}x_{2}}+Z_{x_{2}x_{2}x_{1}}+Z_{x_{2}x_{1}x_{1}}+Z_{x_{1}x_{2}x_{1}}+Z_{x_{1}x_{1}x_{2}},\\
 Z_{x_{0}}Z_{x_{1}}&=Z_{x_{0}\shuffle x_{1}}=Z_{x_{1}x_{0}}+Z_{x_{0}x_{1}},\\
 Z_{x_{1}}Z_{x_{0}}&=Z_{x_{1}\shuffle x_{0}}+Z_{x_{1}\shuffle x_{1}}=Z_{x_{1}x_{0}}+Z_{x_{0}x_{1}}+2Z_{x_{1}x_{1}},\\
 Z_{x_{1}x_{0}}Z_{x_{0}x_{1}}&=Z_{x_{1}x_{0}\shuffle x_{0}x_{2}}+Z_{x_{1}x_{0}\shuffle x_{1}x_{2}}+Z_{x_{2}x_{0}\shuffle x_{0}x_{1}}+Z_{x_{2}x_{0}\shuffle x_{2}x_{1}}.
\end{align*}

Après avoir considéré les algèbres de Hopf $\WMat$ et $\dual{\WMat}$ dans leur globalité, intéressons-nous maintenant à leur structure interne et tirons-en de nouvelles algèbres de Hopf.

\section{Quelques algèbres de Hopf associées}

Dans cette section, on s'intéresse à la mise en évidence d'algèbres de Hopf liées à $\WMat$ en tant que sous-algèbre de Hopf ou algèbre de Hopf quotient.

On commence par considérer l'espace vectoriel engendré par les mots tassés issus de permutations (ex: $x_{2}x_{1}$, $x_{3}x_{2}x_{4}x_{1}$). Il s'agit en réalité d'une sous-algèbre de Hopf de $\WMat$, notée $\SH$. Elle peut également être vue comme algèbre de Hopf quotient de $\WMat$. Par dualité, on obtient une algèbre de Hopf, à la fois quotient et sous-objet de $\dual{\WMat}$, possédant une structure de quadri-algèbre ainsi qu'une double structure d'algèbre dendriforme.  

On considère par la suite l'espace vectoriel engendré par les mots tassés ne contenant pas de lettres d'indice nul. Il forme une sous-algèbre de $\WMat$ mais n'en est pas une sous-cogèbre. Il est toutefois possible de l'interpréter comme une algèbre de Hopf quotient. On s'intéresse à son sous-objet des mots tassés stricts croissants, $\ISPW$. On en détermine des éléments primitifs. On explicite également les opérations de $\dual{\ISPW}$. Ceci permet ainsi d'établir que $\dual{\ISPW}$ est isomorphe à l'algèbre de Hopf des fonctions quasi-symétriques notées $\QSym$. Ainsi $\ISPW$ est isomorphe à l'algèbre de Hopf des fonctions symétriques non commutatives notées $\NSym$.

Le troisième cas d'étude concerne l'espace vectoriel engendré par les mots tassés croissants pouvant posséder des lettres d'indice nul. Dans ce cas, il ne s'agit ni d'une sous-algèbre ni d'une sous-cogèbre, mais il est interprétable comme algèbre de Hopf quotient. On décide de considérer cette dernière sous sa forme d'algèbre de Hopf de compositions étendues $\Cp$. Elle peut être décrite comme coproduit semi-direct d'une algèbre de Hopf tensorielle $C=\tens{(0,n), n\in\N^{*}}$ par une algèbre de Hopf de polynôme $H=\K[(1)]$. Cette construction met en évidence une coaction $\rho$ de $C$ dans $C\ot H$. Cette coaction permet, par transposition, d'obtenir une action de $\dual{H}$ sur $\dual{C}$. Elle fournit également une action du groupe des caractères $Char(H)$ de $H$ sur celui des caractères $Char(C)$ de $C$. On s'intéresse par la suite aux éléments primitifs de $\Cp$. Ils sont, d'une certaine façon, liés à ceux de $\ISPW$. On peut ainsi donner les éléments primitifs de $\Cp$ jusqu'en degré 7. On explicite également les opérations de $\dual{\Cp}$.

\subsection{Algèbre de Hopf de permutations}
\subsubsection{Algèbre de Hopf $\SH$} 
On s'intéresse au sous-espace vectoriel de $\WMat$ suivant: $$\SH= Vect\bigg(w\in\WMat, \IAlph(w)=\{1,\dots,|w|\}\bigg)=Vect\bigg(\underbrace{x_{\sigma(1)}\dots x_{\sigma(n)}}_{\text{noté } w_{\sigma}}, n\in\N, \sigma\in\Sn{n} \bigg).$$
Pour des raisons pratiques on identifiera parfois les éléments de $\SH$ et les permutations, c'est-à-dire : $$w_{\sigma}=x_{\sigma(1)}\dots x_{\sigma(n)}=\sigma(1)\dots\sigma(n)=\begin{pmatrix}1 & \dots & n \\ \sigma(1) & \dots & \sigma(n) \end{pmatrix}=\sigma.$$
Par calcul direct, on obtient que $\SH$ est une sous-algèbre de Hopf de $\WMat$.
\ExCex
$$x_{1}x_{2}x_{3}\in\SH,~x_{3}x_{1}x_{2}+8x_{1}x_{2}\in\SH,~x_{1}x_{1}x_{2}+7x_{2}x_{1}x_{3}\notin\SH,~ x_{5}x_{0}x_{3}x_{1}x_{4}x_{2}-x_{1}\notin\SH.$$

\begin{prop}
	$\SH$ est librement engendrée par $\SH\cap \Irr(\WMat)$. 
\end{prop}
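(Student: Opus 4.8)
The plan is to leverage two facts recalled above: that $\WMat$ is freely generated by $\Irr(\WMat)$, and that every packed word admits a unique factorization $w=w_{1}\ast\dots\ast w_{k}$ into irreducibles. The whole argument reduces to showing that this factorization does not leave $\SH$ when $w$ is a permutation word. I would first describe the generators concretely: a permutation word $x_{\sigma(1)}\dots x_{\sigma(n)}$ factors nontrivially as $u\ast v$ with $u,v\neq1$ exactly when there is an index $j\in\llbracket1,n-1\rrbracket$ with $\{\sigma(1),\dots,\sigma(j)\}=\{1,\dots,j\}$; indeed the prefix $u=x_{\sigma(1)}\dots x_{\sigma(j)}$ must itself be packed, which for a prefix of distinct nonzero letters means its letters are exactly $\{1,\dots,j\}$, and then $v=\pack(x_{\sigma(j+1)}\dots x_{\sigma(n)})$ recovers the suffix through the shift $T_{\sup(u)}$. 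Thus $\SH\cap\Irr(\WMat)$ is precisely the set of indecomposable permutation words, each of which lies in $\SH$.

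The key combinatorial step is to prove that the unique irreducible factorization $w=w_{1}\ast\dots\ast w_{k}$ of a permutation word $w$ consists only of permutation words. Since $0\notin\IAlph(w)$, no factor uses the letter $x_{0}$, and each $w_{i}$ being packed forces $\IAlph(w_{i})=\{1,\dots,\sup(w_{i})\}$. The product $\ast$ then places the letters of $w_{1}$ on $\{1,\dots,\sup(w_{1})\}$, those of $w_{2}$ on $\{\sup(w_{1})+1,\dots,\sup(w_{1})+\sup(w_{2})\}$, and so forth, these blocks being disjoint and exhausting $\IAlph(w)=\{1,\dots,n\}$. Comparing the letter count $|w|=n$ with the total number $\sum_{i}\sup(w_{i})$ of available values forces $|w_{i}|=\sup(w_{i})$ for each $i$, i.e. every $w_{i}$ is a permutation word and hence $w_{i}\in\SH\cap\Irr(\WMat)$. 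I expect this to be the main obstacle, as it is where the bookkeeping of the shifted concatenation must be carried out carefully.

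From here the conclusion is formal. The subalgebra of $\WMat$ generated by $\SH\cap\Irr(\WMat)$ is contained in $\SH$, since its generators lie in the subalgebra $\SH$; conversely, the previous step shows every permutation word is an $\ast$-product of elements of $\SH\cap\Irr(\WMat)$, so that subalgebra contains all permutation words and therefore all of $\SH$, and the two coincide. Freeness is then inherited from $\WMat$: the monomials $w_{i_{1}}\ast\dots\ast w_{i_{m}}$ in the generators $\Irr(\WMat)$ form a basis of the free algebra $\WMat$, and those whose factors all belong to the subset $\SH\cap\Irr(\WMat)$ are a fortiori linearly independent while spanning the generated subalgebra. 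Hence $\SH$ is freely generated by $\SH\cap\Irr(\WMat)$.
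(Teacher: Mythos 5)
Your proof is correct and follows essentially the same route as the paper: both arguments reduce the statement to showing that the unique factorization $w=w_{1}\ast\dots\ast w_{k}$ of a permutation word into irreducibles of $\WMat$ consists only of permutation words, and then inherit freeness from the freeness of $\WMat$ over $\Irr(\WMat)$. The only difference is cosmetic — where the paper rules out a repeated letter $|w_{i_{0}}|_{x_{s}}\geq2$ by exhibiting a repeated letter in $w_{\sigma}$ directly, you use the counting identity $\sum_{i}|w_{i}|=n=\sum_{i}\sup(w_{i})$; you also make explicit the final formal step and the characterization of $\SH\cap\Irr(\WMat)$ as the indecomposable permutations, which the paper leaves implicit.
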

\begin{proof}
	Considérons $\sigma\in \Sn{n}$ et $w_{\sigma}=x_{\sigma_{1}}\dots x_{\sigma_{n}}$.\\
	Alors: $$\text{il existe un unique }k\in\N \text{ et un unique } (w_{1},\dots,w_{k})\in \Irr(\WMat)^{k} \text{ tels que } w=w_{1}\ast\dots\ast w_{k}.$$
	Supposons qu'il existe un entier $i_{0}\in\{1,\dots,k\}$ tel que $w_{i_{0}}\notin \Sn{|w_{i_{0}}|}$. On a alors deux possibilités:
	\begin{enumerate}
		\item soit $x_{0}\in \AAlph(w_{i_{0}})$ et alors $x_{0}\in \AAlph(w_{\sigma})$. Ceci est exclu.
		\item soit il existe un entier $s\in\{1,\dots,|w_{i_{0}}|\}$, tel que $|w_{i_{0}}|_{x_{s}}\geq 2$. Cela implique alors que le mot $w_{\sigma}$ contient au moins deux lettres d'indice $s+\sum\limits_{u=1}^{i_{0}-1}\sup(w_{u})$. Ceci est exclu.
	\end{enumerate}
	On obtient donc que pour tout entier $i\in\{1,\dots,k\}$, $w_{i}\in \Sn{|w_{i}|}$.	
\end{proof}

Décrivons le produit et le coproduit adaptés à la sous-algèbre de Hopf $\SH$. En ce qui concerne le produit, il s'écrit comme restriction de celui de $\WMat$, \emph{i.e.}:
$$\ast:\left\{\begin{array}{rcl} \SH\ot\SH &\longrightarrow & \SH \\ w_{\sigma}\ot w_{\tau} &\longrightarrow &
w_{\sigma}\ast w_{\tau} = w_{\sigma}T_{|w_{\sigma}|}(w_{\tau}).\end{array}\right.$$
\Ex{}
\begin{align*}
	x_{2}x_{1}x_{3}\ast x_{5}x_{1}x_{4}x_{3}x_{2}=&x_{2}x_{1}x_{3}x_{8}x_{4}x_{7}x_{6}x_{5},\\
	x_{5}x_{1}x_{4}x_{3}x_{2}\ast x_{2}x_{1}x_{3}=&x_{5}x_{1}x_{4}x_{3}x_{2}x_{7}x_{6}x_{8}.
\end{align*}
En ce qui concerne le coproduit de $\SH$, la restriction de celui de $\WMat$ s'écrit sous la forme:
$$\De:\left\{\begin{array}{rcl} \SH &\longrightarrow &  \SH\ot\SH \\ w_{\sigma} &\longrightarrow &
\Delta(w_{\sigma})=\sum\limits_{I + J=\{1,\dots,n\}}\pack(w[I])\otimes \pack(w[J]).\end{array}\right.$$
\Ex{1} Pour une meilleure lisibilité, on ne donne que le coproduit réduit dans l'exemple suivant.
$$\tilde{\De}(x_{3}x_{1}x_{2})=2x_{1}\otimes x_{2}x_{1}+x_{1}\otimes x_{1}x_{2}+2x_{2}x_{1}\otimes x_{1}+x_{1}x_{2}\otimes x_{1}.$$

\begin{prop}\label{SHquotient}
	L'algèbre de Hopf $\SH$ peut être vue comme une algèbre de Hopf quotient de $\WMat$.
\end{prop}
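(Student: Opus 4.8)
Le plan est de r�aliser $\SH$ comme le quotient de $\WMat$ par un id�al de Hopf homog�ne. Posons $I=Vect\big(w\in\PX \mid \IAlph(w)\neq\{1,\dots,|w|\}\big)$, le sous-espace engendr� par les mots tass�s qui ne proviennent pas d'une permutation, c'est-�-dire ceux dans lesquels figure la lettre $x_{0}$ ou dans lesquels une lettre d'indice non nul appara�t au moins deux fois; nous appellerons \emph{d�faut} une telle configuration. Les mots tass�s formant une base de $\WMat$ et se r�partissant entre mots de permutation et mots pr�sentant un d�faut, on dispose de la d�composition en espaces vectoriels gradu�s $\WMat=\SH\oplus I$. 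Il suffit alors de prouver que $I$ est un id�al bilat�re et un coid�al: comme $\WMat$ est gradu�e connexe et que $I$ est homog�ne, le quotient $\WMat/I$ sera automatiquement une alg�bre de Hopf, et la projection canonique restreinte � $\SH$ en fournira l'isomorphisme.

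Pour le caract�re d'id�al, on observe que le produit $w_{1}\ast w_{2}=w_{1}T_{\sup(w_{1})}(w_{2})$ laisse inchang�es les lettres de $w_{1}$, tandis que $T_{\sup(w_{1})}$ fixe $x_{0}$ et pr�serve les multiplicit�s des autres lettres. Ainsi, d�s que $w_{1}$ ou $w_{2}$ poss�de un d�faut, celui-ci subsiste dans $w_{1}\ast w_{2}$, qui appartient donc encore � $I$: autrement dit, $I$ est un id�al bilat�re.

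Pour le caract�re de coid�al, on a imm�diatement $\varepsilon(I)=0$ puisque $I$ est engendr� par des mots non vides, et le point central est l'inclusion $\Delta(I)\subseteq I\ot\WMat+\WMat\ot I$. Soit $w\in I$; on montre que, dans chaque terme $\pack(w[K])\ot\pack(w[L]/w[K])$ de $\Delta(w)$, l'un au moins des deux facteurs appartient � $I$, en raisonnant sur la nature du d�faut de $w$. Si $x_{0}$ appara�t en une position de $w$, cette position tombe dans $K$ ou dans $L$, et la r�gle de contraction (qui fixe $x_{0}$ et transforme en $x_{0}$ toute lettre d�j� pr�sente dans $w[K]$) force l'apparition de $x_{0}$ dans le facteur correspondant. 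Si un indice $i\neq 0$ est r�p�t� en deux positions $p$ et $q$, on distingue selon leur r�partition: si elles sont du m�me c�t�, la r�p�tition persiste (�ventuellement sous la forme d'une r�p�tition de $x_{0}$ apr�s contraction); si elles sont s�par�es entre $K$ et $L$, la lettre situ�e du c�t� de $L$ est contract�e en $x_{0}$, cr�ant � nouveau un d�faut. Dans tous les cas, un facteur est dans $I$, d'o� l'inclusion annonc�e.

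La principale difficult� r�side dans ce dernier point, qui demande de contr�ler finement la mani�re dont le coproduit d'extraction-contraction r�partit le d�faut de $w$; le cas d�licat est celui d'un indice r�p�t� s�par� entre $K$ et $L$, o� c'est la contraction en $x_{0}$, et non la r�p�tition initiale, qui produit le d�faut dans le second facteur. Une fois �tabli que $I$ est un id�al de Hopf homog�ne, $\WMat/I$ est une alg�bre de Hopf gradu�e connexe, et la composition $\SH\hookrightarrow\WMat\twoheadrightarrow\WMat/I$, compos�e de deux morphismes d'alg�bres de Hopf et bijective gr�ce � la d�composition $\WMat=\SH\oplus I$, est l'isomorphisme recherch�, ce qui identifie $\SH$ � une alg�bre de Hopf quotient de $\WMat$.
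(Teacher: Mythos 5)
Your proposal is correct and follows essentially the same route as the paper: the paper's proof simply invokes the canonical surjection $\Pi_{\SH}:\WMat\to\SH$ and asserts that its kernel is a Hopf biideal, and that kernel is exactly your space $I$ spanned by the non-permutation packed words. You merely fill in the verification (stability of the defect under $\ast$, and the case analysis showing each term of $\Delta(w)$ has a factor in $I$, including the subtle case where a repeated letter split between $K$ and $L$ is contracted to $x_{0}$), which is a faithful expansion of the paper's one-line argument.
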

\begin{proof}
Il suffit de considérer la surjection canonique $\Pi_{\SH} : \WMat \longrightarrow \SH$. Son noyau est un biidéal de Hopf.
\end{proof}

\subsubsection{Algèbre de Hopf $\dual{\SH}$}
D'après la proposition \ref{SHquotient}, $\dual{\SH}$ est à la fois une algèbre de Hopf quotient et une sous-algèbre de Hopf de $\dual{\WMat}$. Pour la suite, on choisira de considérer $\dual{\SH}$ comme un sous-objet de $\dual{\WMat}$. On notera la base duale des mots tassés issus de permutations soit par $(Z_{w_{\sigma}})_{w_{\sigma}\in \SH}$, soit par $(Z_{\sigma})_{\substack{\sigma\in\Sn{n} \\n\in\N^{*} }}$ où,
$Z_{w_{\sigma}}=Z_{\sigma}$ suivant ce qui est le plus adapté.
\paragraph{Description des opérations de $\dual{\SH}$.} 
Le coproduit de $\dual{\SH}$ est la restriction de celui de $\dual{\WMat}$ à $\dual{\SH}$.
\Ex{1} On considère le mot tassé $x_{3}x_{1}x_{2}x_{5}x_{4}x_{6}=x_{3}x_{1}x_{2}\ast x_{2}x_{1}\ast x_{1}$. Son coproduit réduit vaut:
$$\tilde{\De}(Z_{x_{3}x_{1}x_{2}x_{5}x_{4}x_{6}})=Z_{x_{3}x_{1}x_{2}}\ot Z_{x_{2}x_{1}x_{3}}+Z_{x_{3}x_{1}x_{2}x_{5}x_{4}}\ot Z_{x_{1}}.$$

Le plus intéressant concerne le produit de $\dual{\SH}$. En effet, comme $\SH$ est cocommutative, $\dual{\SH}$ est commutative. De plus, le produit de $\dual{\SH}$ est la restriction à $\dual{\SH}$ de celui de $\dual{\WMat}$, ce qui revient à ne considérer que la condition $C_{1}$ décrite dans \ref{operationsdualWMat}. Il s'écrit donc:  
	$$m:\left\{\begin{array}{rcl} \dual{\SH}\ot\dual{\SH} &\longrightarrow & \dual{\SH}  \\ 
	Z_{\sigma_{1}}\ot Z_{\sigma_{2}} &\longrightarrow & Z_{\sigma_{1}}Z_{\sigma_{2}}=\sum\limits_{\tau,\mu\in Bat(n_{1},n_{2})}Z_{\tau\circ(\sigma_{1}\ast \sigma_{2})\circ\mu^{-1}} \end{array}\right.$$
où $n_{1}$ et $n_{2}$ sont les entiers naturels non nuls tels que $\sigma_{1}\in\Sn{n_{1}}$ et $\sigma_{2}\in\Sn{n_{2}}$.
\Ex{1}
\begin{align*}
Z_{x_{1}}Z_{x_{2}x_{1}}&=Z_{x_{1}\shuffle x_{3}x_{2}}+Z_{x_{2}\shuffle x_{3}x_{1}}+Z_{x_{3}\shuffle x_{2}x_{1}}\\
&=3Z_{x_{3}x_{2}x_{1}}+Z_{x_{1}x_{3}x_{2}}+2Z_{x_{3}x_{1}x_{2}}+2Z_{x_{2}x_{3}x_{1}}+Z_{x_{2}x_{1}x_{3}},\\
\end{align*}
\Rq{1} La structure d'algèbre de Hopf de $\dual{\SH}$ est la même que celle construite par Aguiar et Mahajan  à partir du monoïde de Hopf des ordres linéaires et d'un foncteur de Fock \cite[Partie III, chapitre 15, exemple 15.17]{Aguiar2010}. On retrouve également cette structure dans les travaux de Vargas \cite{Vargas2014} sous le nom d'algèbre de Hopf de super-battages. 

\paragraph{Structure de quadri-algèbre et d'algèbre dendriforme sur $\dual{\SH}$.}
\subparagraph{Rappels.} 
Commençons par quelques rappels sur les quadri-algèbres, les algèbres dendriformes et les algèbres de Zinbiel aussi appelées algèbres dendriformes commutatives ou encore algèbres de Leibniz duales. 

Les algèbres dendriformes ont été introduites par Loday \cite{Loday2001}. Elles permettent d'écrire certains produits associatifs comme somme d'un produit gauche et d'un produit droit. Cela permet donc de casser l'associativité et d'obtenir des relations de compatibilité souvent plus faciles à vérifier. Dans \cite{Loday1998}, Loday et Ronco, grâce à une structure dendriforme, définissent une algèbre de Hopf d'arbres binaires appelée algèbre de Loday et Ronco. Elle est librement engendrée par l'arbre binaire à un seul sommet interne. Foissy démontre \cite[proposition 31]{Foissy2002b} que l'algèbre de Hopf de Loday et Ronco décorée est isomorphe à l'algèbre de Hopf des arbres enracinés plans décorés. Cette démonstration repose sur les caractères dendriformes des deux objets. Aguiar et Sottile \cite{Aguiar2006a} étudient le dual gradué de l'algèbre de Loday et Ronco. La structure bidendriforme est introduite par Foissy \cite{Foissy2007}. En plus de scinder l'associativité, elle scinde également la coassociativité. Foissy explicite également la structure bidendriforme de l'algèbre de Malvenuto et Reutenauer $\FQSym$ ainsi que celle de l'algèbre de Hopf des arbres enracinés plans décorés de Connes et Kreimer.  Des versions analogues au \CQMM{} sont prouvées: par Ronco \cite{Ronco2001} pour les algèbres dendriformes, par Chapoton \cite{Chapoton2002} pour les bigèbres dendriformes et par Foissy \cite{Foissy2007} pour les bigèbres bidendriformes. Le cas bidendriforme implique \cite[théorème 39]{Foissy2007} que $\FQSym$ est isomorphe à une algèbre de Hopf d'arbres enracinés plans décorés.

Les algèbres de Zinbiel ont été définies par Loday \cite{Loday1995,Loday2001}. Elles sont équivalentes aux algèbres dendriformes commutatives et sont liées aux algèbres de Leibniz (les opérades $\Zinb$ et $\Lieb$ son duales dans la dualité de Koszul).  Livernet \cite{Livernet1998} remplace les algèbres de Lie et les algèbres commutatives par les algèbres de Leibniz et les algèbres de Zinbiel pour définir une théorie non commutative des homotopies rationnelles. Chapoton \cite[théorème 5.9]{Chapoton2005} démontre que l'opérade $\Zinb$ est anticyclique (il existe une action du groupe symétrique $\Sn{n+1}$ sur le $\Sn{n}$-module $\mathcal{P}(n)$ pour tout entier naturel $n$). Vallette \cite[théorème 20]{Vallette2008} donne une description de $\Zinb$ en fonction des opérades $\PreLie$ et $\Com$ grâce au produit de Manin.

En considérant une algèbre dendriforme et en scindant les produits gauche et droit en deux lorsque cela est possible, on obtient une quadri-algèbre. Aguiar et Loday introduisent cette notion dans \cite{Aguiar2004}. Ils déterminent une structure de quadri-algèbre sur les algèbres infinitésimales et s'intéressent à la quadri-algèbre libre à un générateur \cite[corollaire 3.3, section 4]{Aguiar2004}.  Vallette \cite[corollaires 48 et 52, théorème 51]{Vallette2008} prouve les conjectures émises par Aguiar et Loday dans \cite[conjectures 4.2, 4.5 et 4.6]{Aguiar2004} concernant les opérades $\Quad$ et $\Quad^{!}$. Foissy présente la quadri-algèbre libre à un générateur comme sous objet de $\FQSym$ \cite[corollaire 7]{Foissy2015} et définit la notion de quadri-bigèbre en détaillant l'exemple de $\FQSym$ \cite[section 3]{Foissy2015}.

\begin{defi}
	Une quadri-algèbre $\mathcal{Q}$ est un espace vectoriel muni de quatre opérations $\searrow$, $\nearrow$, $\nwarrow$ et $\swarrow$, définies sur $\mathcal{Q}\ot\mathcal{Q}$, à valeurs dans $\mathcal{Q}$, et, vérifiant les axiomes suivants: pour tout $x,y,z\in\mathcal{Q}$,
	\begin{align*}
		(x\nwarrow y)\nwarrow z&=x\nwarrow(y\cdot z), & (x\nearrow y)\nwarrow z&=x\nearrow(y\prec z), & (x\wedge y)\nearrow z&=x\nearrow(y\succ z),\\
		(x\swarrow y)\nwarrow z&=x\swarrow(y\wedge z), & (x\searrow y)\nwarrow z&=x\searrow(y\nwarrow z), & (x\vee y)\nearrow z&=x\searrow(y\nearrow z),\\
		(x\prec y)\swarrow z&=x\swarrow(y\vee z), & (x\succ y)\swarrow z&=x\searrow(y\swarrow z), & (x\cdot y)\searrow z&=x\searrow(y\searrow z),
	\end{align*}
	où, pour tout $x,y\in\mathcal{Q}$, \begin{align*}
		x\prec y&=x\nwarrow y+x\swarrow y,& x\wedge y&=x\nearrow y+x\nwarrow y,\\
		x\succ y&=x\nearrow y+x\searrow y,& x\vee y&=x\searrow y+x\swarrow y,\\
	\end{align*}
	et $$x\cdot y= x\nwarrow y+x\swarrow y+x\nearrow y+x\searrow y=x\prec y+x\succ y=x\wedge y+x\vee y.$$
\end{defi}
\begin{defi}
	Une algèbre dendriforme est un espace vectoriel $\mathcal{D}$ muni de deux opérations, un produit gauche $\prec:\mathcal{D}\ot\mathcal{D}\longrightarrow \mathcal{D}$ et un produit droit $\succ:\mathcal{D}\ot\mathcal{D}\longrightarrow \mathcal{D}$, vérifiant les axiomes suivants: $\forall x,y,z\in\mathcal{D}$,
	\begin{align*}
	(x\prec y)\prec z&=x\prec(y\prec z)+x\prec(y\succ z),\\
	(x\succ y)\prec z&=x\succ(y\prec z),\\
	(x\prec y)\succ z+ (x\succ y)\succ z&=x\succ(y\succ z).
	\end{align*}
\end{defi}

\begin{prop}
	Soit $(\mathcal{Q},\searrow, \nearrow, \nwarrow ,\swarrow)$ une quadri-algèbre alors, $(\mathcal{Q},\nwarrow+\swarrow,\nearrow+ \searrow)$, \emph{i.e.}  $(\mathcal{Q},\prec,\succ)$, et $(\mathcal{Q},\nearrow+\nwarrow, \searrow+\swarrow)$, \emph{i.e.} $(\mathcal{Q},\wedge, \vee)$, sont des algèbres dendriformes.
\end{prop}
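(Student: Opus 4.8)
The claim is that if $(\mathcal{Q},\searrow,\nearrow,\nwarrow,\swarrow)$ is a quadri-algebra, then both $(\mathcal{Q},\prec,\succ)$ and $(\mathcal{Q},\wedge,\vee)$ are dendriform algebras. By the stated definitions, $\prec=\nwarrow+\swarrow$, $\succ=\nearrow+\searrow$, $\wedge=\nearrow+\nwarrow$ and $\vee=\searrow+\swarrow$. The plan is a direct verification: expand each of the three dendriform axioms in terms of the four elementary operations and check that every resulting identity is a consequence of the nine quadri-algebra axioms. Since the two structures are formally dual (swapping the roles of the $\nwarrow/\swarrow$ split with the $\nwarrow/\nearrow$ split, i.e. transposing the $2\times 2$ array of operations along its antidiagonal), I would prove the $(\mathcal{Q},\prec,\succ)$ case in full and then invoke this symmetry to obtain the $(\mathcal{Q},\wedge,\vee)$ case.

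Let me sketch the $(\mathcal{Q},\prec,\succ)$ computation. For the first dendriform axiom, $(x\prec y)\prec z = x\prec(y\prec z)+x\prec(y\succ z)$, the right-hand side is $x\prec(y\cdot z)$ since $\prec+\succ$ equals the total product $\cdot$. Expanding $(x\prec y)\prec z=((x\nwarrow y)+(x\swarrow y))\prec z$ into its four pieces and $x\prec(y\cdot z)$ similarly, I expect the needed identities to be exactly the quadri-algebra axioms in the first column of the display, namely $(x\nwarrow y)\nwarrow z=x\nwarrow(y\cdot z)$, $(x\swarrow y)\nwarrow z=x\swarrow(y\wedge z)$, $(x\nwarrow y)\swarrow z$-type relations, together with $(x\prec y)\swarrow z=x\swarrow(y\vee z)$. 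The second axiom $(x\succ y)\prec z=x\succ(y\prec z)$ should reduce to the middle-column relations such as $(x\nearrow y)\nwarrow z=x\nearrow(y\prec z)$ and $(x\searrow y)\nwarrow z=x\searrow(y\nwarrow z)$. The third axiom $(x\prec y)\succ z+(x\succ y)\succ z=x\succ(y\succ z)$, whose left-hand side collapses to $(x\cdot y)\succ z$, should follow from the relations involving $\nearrow$ and $\searrow$ on the outside, in particular $(x\vee y)\nearrow z=x\searrow(y\nearrow z)$, $(x\wedge y)\nearrow z=x\nearrow(y\succ z)$ and $(x\cdot y)\searrow z=x\searrow(y\searrow z)$.

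The mechanics of each check are the same: substitute the definitions of $\prec,\succ,\wedge,\vee,\cdot$ as sums of elementary operations, distribute the bilinear operations over these sums on both sides, and match term-by-term. The nine axioms are precisely arranged so that each elementary product $\nwarrow,\swarrow,\nearrow,\searrow$ appearing on the outermost layer is governed by one row (for $\nwarrow,\swarrow$) handling the $\prec$-structure and one row (for $\nearrow,\searrow$) handling the $\succ$-structure; this is why the splitting works.

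The only real obstacle is bookkeeping: each dendriform axiom becomes a sum of up to four or six quadri-terms per side, and one must pair them correctly against the quadri-axioms without dropping or miscombining a term. There is no conceptual difficulty, only the need to organize the $16$-odd elementary products cleanly. To keep the argument short I would display the expansion of the first axiom fully as a model and then state that the remaining two axioms, and the entire $(\mathcal{Q},\wedge,\vee)$ case by the antidiagonal symmetry, follow by the identical term-matching. This reduces the proof to one carefully written computation plus an appeal to symmetry.
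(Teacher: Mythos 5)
The paper states this proposition without proof --- it is recalled from Aguiar and Loday's article introducing quadri-algebras --- so your direct verification supplies an argument the paper omits, and it is the standard one. Your term-matching is correct: summing the three axioms in the first (resp.\ second, third) column of the displayed $3\times 3$ array yields exactly the first (resp.\ second, third) dendriform axiom for $(\mathcal{Q},\prec,\succ)$; for instance $(x\prec y)\prec z=(x\nwarrow y)\nwarrow z+(x\swarrow y)\nwarrow z+(x\prec y)\swarrow z$ becomes $x\nwarrow(y\cdot z)+x\swarrow(y\wedge z)+x\swarrow(y\vee z)=x\prec(y\cdot z)$, and summing the rows likewise yields the dendriform axioms for $(\mathcal{Q},\wedge,\vee)$. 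One caution on your symmetry shortcut: the involution that exchanges the two dendriform structures is the one fixing $\nwarrow$ and $\searrow$ and swapping $\nearrow\leftrightarrow\swarrow$ --- the main-diagonal transpose of the $2\times 2$ array of operations, which is what your verbal description (exchanging the column split with the row split) actually amounts to --- and not the antidiagonal reflection. The map swapping $\nwarrow\leftrightarrow\searrow$ does not preserve the nine axioms: it sends $(x\nwarrow y)\nwarrow z=x\nwarrow(y\cdot z)$ to $(x\searrow y)\searrow z=x\searrow(y\cdot z)$, which is not among them. With that relabelling your appeal to symmetry is legitimate, since the main-diagonal transpose permutes the nine axioms and carries $(\prec,\succ)$ to $(\wedge,\vee)$; alternatively, the row-sum computation above is just as short as invoking the symmetry.
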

\begin{defi}
	Une algèbre de Zinbiel est un espace vectoriel $\mathcal{Z}$ muni d'une opération binaire définie sur $\mathcal{Z}\ot\mathcal{Z}$, à valeur dans $\mathcal{Z}$ telle que: $$\text{pour tout } x,y,z\in\mathcal{Z},~(x\cdot y)\cdot z=x\cdot(y\cdot z)+x\cdot (z\cdot y).$$
\end{defi}
\begin{prop}
	Soit $(\mathcal{Z},\cdot)$ une algèbre de Zinbiel. Pour tout couple $(x,y)\in Z^{2}$ on pose $x\prec y=x\cdot y$ et $x\succ y=y\cdot x$. On obtient alors que $(\mathcal{Z},\prec,\succ)$ est une algèbre dendriforme. Réciproquement, si $(\mathcal{D},\prec,\succ)$ est une algèbre dendriforme commutative (\emph{i.e.} pour tout élément $(x,y)\in\mathcal{D}^{2}$, $x\succ y=y\prec x$) alors,
	$(\mathcal{D,\prec})$ est une algèbre de Zinbiel.
\end{prop}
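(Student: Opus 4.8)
Le plan est de traiter s\'epar\'ement les deux implications, chacune se ramenant \`a une v\'erification directe des axiomes par r\'e\'ecriture. Pour le sens direct, je partirais d'une alg\`ebre de Zinbiel $(\mathcal{Z},\cdot)$ munie des produits $x\prec y=x\cdot y$ et $x\succ y=y\cdot x$, puis je v\'erifierais les trois axiomes dendriformes l'un apr\`es l'autre. Le premier, $(x\prec y)\prec z=x\prec(y\prec z)+x\prec(y\succ z)$, se r\'e\'ecrit $(x\cdot y)\cdot z=x\cdot(y\cdot z)+x\cdot(z\cdot y)$, qui n'est autre que l'axiome de Zinbiel. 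Pour le deuxi\`eme, $(x\succ y)\prec z=x\succ(y\prec z)$ devient $(y\cdot x)\cdot z=(y\cdot z)\cdot x$; en d\'eveloppant chaque membre gr\^ace \`a l'axiome de Zinbiel, les deux c\^ot\'es valent $y\cdot(x\cdot z)+y\cdot(z\cdot x)$, d'o\`u l'\'egalit\'e. Pour le troisi\`eme, $(x\prec y)\succ z+(x\succ y)\succ z=x\succ(y\succ z)$ devient $z\cdot(x\cdot y)+z\cdot(y\cdot x)=(z\cdot y)\cdot x$, \'egalit\'e qu'on obtient en d\'eveloppant $(z\cdot y)\cdot x$ par l'axiome de Zinbiel.

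Pour la r\'eciproque, je partirais d'une alg\`ebre dendriforme commutative $(\mathcal{D},\prec,\succ)$, c'est-\`a-dire v\'erifiant $x\succ y=y\prec x$, et je montrerais que $(\mathcal{D},\prec)$ satisfait l'axiome de Zinbiel. Il suffit de reprendre le premier axiome dendriforme $(x\prec y)\prec z=x\prec(y\prec z)+x\prec(y\succ z)$ et d'y substituer l'\'egalit\'e $y\succ z=z\prec y$: on obtient imm\'ediatement $(x\prec y)\prec z=x\prec(y\prec z)+x\prec(z\prec y)$, c'est-\`a-dire exactement l'axiome de Zinbiel pour le produit $\prec$.

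La seule \'etape demandant un peu d'attention est le deuxi\`eme axiome dendriforme du sens direct: l'\'egalit\'e n'y est pas une application imm\'ediate de l'axiome de Zinbiel mais n\'ecessite de d\'evelopper les deux membres avant de constater leur co\"incidence. Tout le reste se r\'eduit \`a de simples r\'e\'ecritures, et c'est pr\'ecis\'ement la commutativit\'e $x\succ y=y\prec x$ qui fournit la sym\'etrie requise dans la r\'eciproque.
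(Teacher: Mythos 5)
Votre v\'erification est correcte : les trois axiomes dendriformes se ram\`enent bien \`a l'axiome de Zinbiel (le deuxi\`eme apr\`es d\'eveloppement des deux membres, comme vous le soulignez), et la r\'eciproque s'obtient en substituant $y\succ z=z\prec y$ dans le premier axiome. Le papier \'enonce cette proposition comme un rappel classique d\^u \`a Loday et n'en donne pas de d\'emonstration; votre argument est exactement la v\'erification directe standard attendue.
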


\subparagraph{Cas de $\dual{\SH}$.}
Introduisons d'abord les notations suivantes. Soient $n_{1},n_{2}\in\N^{*}$, on pose
\begin{align*}
Bat_{1}(n_{1},n_{2})=&\{\rho\in Bat(n_{1},n_{2})\text{, } \rho^{-1}(1)=1\},\\
Bat_{2}(n_{1},n_{2})=&\{\rho\in Bat(n_{1},n_{2})\text{, } \rho^{-1}(1)=n_{1}+1\}.  
\end{align*}
Soient $\sigma_{1}\in\Sn{1}$ et $\sigma_{2}\in\Sn{2}$. Définissons sur $\dual{\SH}$ les produits suivants:
\begin{align*}
Z_{\sigma_{1}}\nwarrow Z_{\sigma_{2}}=&\sum_{\substack{\tau\in Bat_{1}(n_{1},n_{2}), \\ \mu\in Bat_{1}(n_{1},n_{2})}}Z_{\tau\circ(\sigma_{1}\ast \sigma_{2})\circ\mu^{-1}},
&Z_{\sigma_{1}}\swarrow Z_{\sigma_{2}}=&\sum_{\substack{\tau\in Bat_{1}(n_{1},n_{2}), \\ \mu\in Bat_{2}(n_{1},n_{2})}}Z_{\tau\circ(\sigma_{1}\ast \sigma_{2})\circ\mu^{-1}},\\
Z_{\sigma_{1}}\nearrow Z_{\sigma_{2}}=&\sum_{\substack{\tau\in Bat_{2}(n_{1},n_{2}), \\ \mu\in Bat_{1}(n_{1},n_{2})}}Z_{\tau\circ(\sigma_{1}\ast \sigma_{2})\circ\mu^{-1}}, 
&Z_{\sigma_{1}}\searrow Z_{\sigma_{2}}=&\sum_{\substack{\tau\in Bat_{2}(n_{1},n_{2}), \\ \mu\in Bat_{2}(n_{1},n_{2})}}Z_{\tau\circ(\sigma_{1}\ast \sigma_{2})\circ\mu^{-1}}.
\end{align*}
\Ex{}
\begin{align*}
Z_{x_{2}x_{1}}\nwarrow Z_{x_{1}}=&Z_{x_{2}x_{1}x_{3}}+Z_{x_{2}x_{3}x_{1}}+Z_{x_{3}x_{1}x_{2}}+Z_{x_{3}x_{2}x_{1}},\\
Z_{x_{2}x_{1}}\swarrow Z_{x_{1}}=&Z_{x_{3}x_{2}x_{1}}+Z_{x_{2}x_{3}x_{1}},\\
Z_{x_{2}x_{1}}\nearrow Z_{x_{1}}=&Z_{x_{3}x_{2}x_{1}}+Z_{x_{3}x_{1}x_{2}},\\
Z_{x_{2}x_{1}}\searrow Z_{x_{1}}=&Z_{x_{1}x_{3}x_{2}}.
\end{align*}
A partir des produits précédents on construit:
\begin{align*}
Z_{\sigma_{1}}\prec Z_{\sigma_{2}}=&Z_{\sigma_{1}}\nwarrow Z_{\sigma_{2}}+Z_{\sigma_{1}}\swarrow Z_{\sigma_{2}}
&Z_{\sigma_{1}}\succ Z_{\sigma_{2}}=&Z_{\sigma_{1}}\nearrow Z_{\sigma_{2}}+Z_{\sigma_{1}}\searrow Z_{\sigma_{2}}\\
=&\sum_{\substack{\tau\in Bat_{1}(n_{1},n_{2}), \\ \mu\in Bat(n_{1},n_{2})}}Z_{\tau\circ(\sigma_{1}\ast \sigma_{2})\circ\mu^{-1}},
&=&\sum_{\substack{\tau\in Bat_{2}(n_{1},n_{2}), \\ \mu\in Bat(n_{1},n_{2})}}Z_{\tau\circ(\sigma_{1}\ast \sigma_{2})\circ\mu^{-1}}.
\end{align*}
\begin{align*}
Z_{\sigma_{1}}\wedge Z_{\sigma_{2}}=&Z_{\sigma_{1}}\nwarrow Z_{\sigma_{2}}+Z_{\sigma_{1}}\nearrow Z_{\sigma_{2}}
&Z_{\sigma_{1}}\vee Z_{\sigma_{2}}=&Z_{\sigma_{1}}\swarrow Z_{\sigma_{2}}+Z_{\sigma_{1}}\searrow Z_{\sigma_{2}}\\
=&\sum_{\substack{\tau\in Bat(n_{1},n_{2}), \\ \mu\in Bat_{1}(n_{1},n_{2})}}Z_{\tau\circ(w_{\sigma_{1}}\ast w_{\sigma_{2}})\circ\mu^{-1}}, 
&=&\sum_{\substack{\tau\in Bat(n_{1},n_{2}), \\ \mu\in Bat_{2}(n_{1},n_{2})}}Z_{\tau\circ(w_{\sigma_{1}}\ast w_{\sigma_{2}})\circ\mu^{-1}}.
\end{align*}
\Ex{}
\begin{align*}
Z_{x_{2}x_{1}}\prec Z_{x_{1}}=&Z_{x_{2}x_{1}x_{3}}+2Z_{x_{2}x_{3}x_{1}}+2Z_{x_{3}x_{2}x_{1}}+Z_{x_{3}x_{1}x_{2}},\\
Z_{x_{2}x_{1}}\succ Z_{x_{1}}=&Z_{x_{3}x_{2}x_{1}}+Z_{x_{3}x_{1}x_{2}}+Z_{x_{1}x_{3}x_{2}},\\
Z_{x_{2}x_{1}}\wedge Z_{x_{1}}=&Z_{x_{2}x_{1}x_{3}}+Z_{x_{2}x_{3}x_{1}}+2Z_{x_{3}x_{1}x_{2}}+2Z_{x_{3}x_{2}x_{1}},\\
Z_{x_{2}x_{1}}\vee Z_{x_{1}}=&Z_{x_{3}x_{2}x_{1}}+Z_{x_{2}x_{3}x_{1}}+Z_{x_{1}x_{3}x_{2}}.
\end{align*}

Par calcul direct, on obtient que $(\SH,\swarrow,\nwarrow,\nearrow,\searrow)$ est une quadri-algèbre. Il s'en suit que $(\SH,\prec,\succ)$ et $(\SH,\wedge,\vee)$ sont des algèbres dendriformes. Il est clair qu'elles sont commutatives et donc que $(\SH,\prec)$ et $(\SH,\wedge)$ sont des algèbres de Zinbiel.

En revanche, les algèbres dendriformes ne respectent pas les compatibilités décrites dans \cite{Foissy2007} vis-à-vis du coproduit; même en utilisant le coproduit co-opposé $\De^{cop}$. Pour s'en convaincre, il suffit de considérer $\Delta(Z_{x_{1}}\prec Z_{x_{1}x_{2}})$, $\Delta(Z_{x_{2}x_{1}}\prec Z_{x_{2}x_{1}x_{3}})$ et $\Delta(Z_{x_{2}x_{1}x_{3}}\wedge Z_{x_{1}})$.

\subsection{Mots tassés stricts croissants}
\subsubsection{Algèbre de Hopf des mots tassés stricts croissants $\ISPW$}
On considère le sous-espace vectoriel de $\WMat$ défini par: $$\Idl{V}=\langle w\in \WMat,~x_{0}\in \AAlph(w)\rangle.$$
Il est immédiat que $\Idl{V}$ est un biidéal de Hopf. On note $\SPW$ l'algèbre de Hopf quotient $\WMat/\Idl{V}$. On s'intéresse au sous-espace vectoriel $\langle \overline{\underbrace{x_{1}\dots x_{1}}_{\alpha_{1} \text{ fois}}\dots\underbrace{x_{n}\dots x_{n}}_{\alpha_{n} \text{ fois}}}\in\SPW,~(n,\alpha_{1},\dots,\alpha_{n})\in(\N^{*})^{n+1} \rangle$. Il s'agit d'une sous-algèbre de Hopf de $\SPW$; on la note $\ISPW$. Par commodité, on écrira toujours les éléments de $\ISPW$ sous forme de combinaison linéaire de mots tassés de la forme $\underbrace{x_{1}\dots x_{1}}_{\alpha_{1} \text{ fois}}\dots\underbrace{x_{n}\dots x_{n}}_{\alpha_{n} \text{ fois}}$ et non sous forme de combinaison linéaire de classes. L'algèbre de Hopf $\ISPW$ sera appelée algèbre de Hopf des mots tassés stricts croissants. 
\Ex{}
$$x_{1}x_{2}x_{2}x_{3}\in\ISPW,~x_{1}x_{0}x_{2}\notin\ISPW,~\overline{x_{2}x_{2}x_{1}-x_{0}x_{1}}=x_{1}x_{2}x_{2}\in\ISPW.$$

\begin{prop}
	Dans $\ISPW$ le produit s'exprime de la façon suivante:
	$$\ast:\left\{\begin{array}{rcl} \ISPW\ot\ISPW &\longrightarrow & \ISPW \\ u \ot v &\longrightarrow &
	uT_{\sup(u)}(v).\end{array}\right.$$	
	Quant au coproduit, il s'exprime de la manière suivante:
	$$\Delta:\left\{\begin{array}{rcl} \ISPW &\longrightarrow & \ISPW\otimes\ISPW \\ \underbrace{x_{1}\dots x_{1}}_{\alpha_{1} \text{ fois}}\dots\underbrace{x_{n}\dots x_{n}}_{\alpha_{n} \text{ fois}} &\longrightarrow &
	\displaystyle\sum_{\substack{0\leq p\leq k \\ I+U=\{1,\dots,k\}\\I=\{i_{1}<\dots<i_{p}\}\\U=\{u_{1}<\dots<u_{s}\}\\ s=k-p}}\underbrace{x_{1}\dots x_{1}}_{\alpha_{i_{1}} \text{ fois}}\dots\underbrace{x_{p}\dots x_{p}}_{\alpha_{i_{p}} \text{ fois}}\ot \underbrace{x_{1}\dots x_{1}}_{\alpha_{u_{1}} \text{ fois}}\dots\underbrace{x_{s}\dots x_{s}}_{\alpha_{u_{s}} \text{ fois}}.\end{array}\right.$$
\end{prop}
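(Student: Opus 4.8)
Le plan est de traiter s\'epar\'ement les deux op\'erations, le produit \'etant imm\'ediat et le coproduit constituant le point central de l'\'enonc\'e. Dans les deux cas, je partirais des op\'erations de $\WMat$ puis je les ferais passer au quotient $\SPW=\WMat/\Idl{V}$, en me rappelant que $\Idl{V}$ est engendr\'e par les mots contenant la lettre $x_0$, et que $\ISPW$ est la sous-alg\`ebre de Hopf correspondante.

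Pour le produit, je consid\'ererais $u=\underbrace{x_1\dots x_1}_{\alpha_1\text{ fois}}\dots\underbrace{x_n\dots x_n}_{\alpha_n\text{ fois}}$ et un second mot croissant strict $v$. Dans $\WMat$ on a $u\ast v=uT_{\sup(u)}(v)$. Comme $v$ ne contient pas $x_0$ et que $T_{\sup(u)}$ ne d\'ecale que les indices non nuls, le mot $T_{\sup(u)}(v)$ ne contient pas $x_0$ non plus; de m\^eme pour $u$. Ainsi $uT_{\sup(u)}(v)$ est un mot tass\'e sans lettre $x_0$ et croissant strict, donc sa classe dans $\SPW$ est repr\'esent\'ee par lui-m\^eme et appartient \`a $\ISPW$. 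Le produit est donc bien la restriction annonc\'ee, ce qui confirme au passage la stabilit\'e de $\ISPW$.

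Pour le coproduit, je partirais de $\De(w)=\sum_{I+J=\{1,\dots,|w|\}}\pack(w[I])\ot\pack(w[J]/w[I])$ dans $\WMat$, avec $w=\underbrace{x_1\dots x_1}_{\alpha_1\text{ fois}}\dots\underbrace{x_n\dots x_n}_{\alpha_n\text{ fois}}$, puis je projetterais sur $\SPW$. L'observation d\'ecisive est que le premier facteur $\pack(w[I])$ ne contient jamais $x_0$ (car $w$ n'en contient pas et $\pack$ pr\'eserve l'absence de la lettre $x_0$), tandis que le second facteur $\pack(w[J]/w[I])$ n'acquiert un $x_0$ que lorsqu'une position de $J$ porte une lettre $x_j$ d\'ej\`a pr\'esente dans $w[I]$, c'est-\`a-dire lorsque $x_j\in\AAlph(w[I])$.

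Le point d\'elicat, et l'obstacle principal, est alors la caract\'erisation des termes survivant \`a la projection. Je montrerais qu'un terme est non nul dans $\SPW$ si et seulement si aucun des $n$ blocs de lettres \'egales de $w$ n'est scind\'e entre $I$ et $J$: d\`es qu'une position du bloc d'indice $j$ tombe dans $I$ et une autre dans $J$, cette derni\`ere devient un $x_0$ et le terme meurt. Les termes survivants sont donc en bijection avec les partitions de l'ensemble des blocs $\{1,\dots,n\}=\{1,\dots,k\}$ en deux sous-ensembles ordonn\'es $I=\{i_1<\dots<i_p\}$ et $U=\{u_1<\dots<u_s\}$ avec $s=k-p$. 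Pour un tel couple, $w[I]=\underbrace{x_{i_1}\dots x_{i_1}}_{\alpha_{i_1}\text{ fois}}\dots\underbrace{x_{i_p}\dots x_{i_p}}_{\alpha_{i_p}\text{ fois}}$, dont le tass\'e renum\'erote les indices en $1,\dots,p$, et $w[J]/w[I]=w[J]$ se renum\'erote de m\^eme en $1,\dots,s$. On retrouve ainsi exactement la formule annonc\'ee, chaque facteur \'etant un mot croissant strict, ce qui \'etablit au passage $\De(\ISPW)\subset\ISPW\ot\ISPW$.
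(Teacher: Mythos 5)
Votre argument est correct et correspond exactement à la vérification directe que le papier laisse implicite (la proposition y est énoncée sans démonstration, comme conséquence immédiate de la construction de $\ISPW$ dans le quotient $\SPW=\WMat/\Idl{V}$). Le point clé — un terme $\pack(w[I])\ot\pack(w[J]/w[I])$ ne survit dans le quotient que si aucun bloc de lettres égales n'est scindé entre $I$ et $J$, puisque sinon le second facteur acquiert un $x_{0}$ — est bien identifié et justifie la formule annoncée, y compris l'absence de multiplicités.
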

\Ex{}
\begin{align*}
x_{1}x_{2}x_{2}x_{3}x_{4}\ast x_{1}x_{2}x_{2}x_{2}&=x_{1}x_{2}x_{2}x_{3}x_{4}x_{5}x_{6}x_{6}x_{6},\\
\De(x_{1}x_{1}x_{1})&=x_{1}x_{1}x_{1}\ot1+ 1\ot x_{1}x_{1}x_{1},\\
\De(x_{1}x_{2}x_{2})&=x_{1}x_{2}x_{2}\ot1 +x_{1}\ot x_{1}x_{1} + x_{1}x_{1}\ot x_{1} +1\ot x_{1}x_{2}x_{2}.
\end{align*}

Il est immédiat que $\ISPW$ est librement engendrée par l'ensemble $\big\{\underbrace{x_{1}\dots x_{1}}_{n \text{ fois}}, n\in\N^{*}\big\}$. Sa cogèbre sous-jacente est trivialement cocommutative; l'algèbre de Hopf $\ISPW$ est donc isomorphe à l'algèbre enveloppante de ses éléments primitifs. On montrera dans la proposition \ref{nsymispw} que $\ISPW$ et l'algèbre de Hopf des fonctions symétriques non commutatives sont isomorphes. Un isomorphisme explicite sera donné dans la proposition \ref{isoNSymISPW} de la section \ref{morphQSym}.

On peut facilement calculer la série formelle de $\ISPW$. En effet, pour tout entier naturel $n$ non nul,
$$\bigg|\big\{w\in(\ISPW)_{n} \text{, w mot tassé}\big\}\bigg|=\bigg|\big\{\alpha\vDash n\big\}\bigg|=2^{n-1}.$$
On obtient donc : $$F_{\ISPW}(h)=1+\sum_{n=1}^{\infty}2^{n-1}h^{n}.$$
Ainsi, la série formelle des primitifs est donnée par la séquence A059966 de \cite{Sloane} : 
$$F_{\Prim{\ISPW}{}}(h)=h+h^{2}+2h^{3}+3h^{4}+6h^{5}+9h^{6}+18h^{7}+30h^{8}+56h^{9}+99h^{10}+\dots$$

L'objectif de la suite de cette sous-section est alors de déterminer quelques familles d'éléments primitifs de $\ISPW$.

\begin{prop}\label{formebaseparticuliere} Soit $n\in\N^{*}$. On définit $d_{n}$ comme étant la dimension de $\Prim{\ISPW}{n}$. Il existe une base $\B=(e_{1},\dots,e_{d_{n}})$ de $\Prim{\ISPW}{n}$ telle que chaque élément de $\B$ soit associé à une seule partition de $n$. Plus précisément, pour tout entier $i\in\{1,\dots,d_{n}\}$, il existe une unique partition $(\alpha_{1}\leq\dots\leq\alpha_{k_{i}})$  de $n$ telle que $$e_{i}\in Vect\Bigg(\underbrace{x_{1}\dots x_{1}}_{\alpha_{\sigma(1)} \text{ fois}}\dots\underbrace{x_{k_{i}}\dots x_{k_{i}}}_{\alpha_{\sigma(k_{i})} \text{ fois}}, \sigma\in\Sn{k_{i}}\Bigg).$$
\end{prop}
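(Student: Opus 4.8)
The plan is to exploit a grading of $\ISPW$ finer than the length grading, namely the grading by the \emph{content} of a packed word. To a word $\underbrace{x_{1}\dots x_{1}}_{\alpha_{1}}\dots\underbrace{x_{k}\dots x_{k}}_{\alpha_{k}}$ I attach the multiset $\{\alpha_{1},\dots,\alpha_{k}\}$ of multiplicities, which is a partition of its length. Writing $A_{\lambda}$ for the span of all packed words whose content is the partition $\lambda$, the length-$n$ part of $\ISPW$ splits as $\bigoplus_{\lambda\vdash n}A_{\lambda}$, and for $\lambda=(\alpha_{1}\leq\dots\leq\alpha_{k})$ the component $A_{\lambda}$ is \emph{exactly} the space $Vect\bigl(\underbrace{x_{1}\dots x_{1}}_{\alpha_{\sigma(1)}}\dots\underbrace{x_{k}\dots x_{k}}_{\alpha_{\sigma(k)}},\ \sigma\in\Sn{k}\bigr)$ appearing in the statement. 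So the proposition amounts to showing that $\Prim{\ISPW}{n}$ respects this decomposition.

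First I would check that the $A_{\lambda}$ make $\ISPW$ into a bialgebra graded by the commutative monoid $\mathcal{M}$ of finite multisets of positive integers, with union $\uplus$ as product. Since $u\ast v$ is the concatenation of the two compositions, its content is the multiset union of the two contents, so $A_{\lambda}\ast A_{\mu}\subseteq A_{\lambda\uplus\mu}$. For the coproduct one reads off from the explicit formula that each term of $\Delta(w)$ simply distributes the parts $\alpha_{1},\dots,\alpha_{k}$ between the two tensor factors; hence the contents of the two factors are complementary sub-multisets of the content of $w$, giving $\Delta(A_{\lambda})\subseteq\bigoplus_{\mu\uplus\nu=\lambda}A_{\mu}\otimes A_{\nu}$. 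Finally the counit vanishes on $A_{\lambda}$ for $\lambda\neq\emptyset$ and $1\in A_{\emptyset}$.

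The decisive step is then the standard homogeneity argument for primitives. Given $x\in\Prim{\ISPW}{n}$, decompose $x=\sum_{\lambda\vdash n}x_{\lambda}$ with $x_{\lambda}\in A_{\lambda}$. By the coproduct compatibility above, $\tilde{\Delta}(x_{\lambda})$ lies in $\bigoplus_{\mu\uplus\nu=\lambda,\ \mu,\nu\neq\emptyset}A_{\mu}\otimes A_{\nu}$, and as $\lambda$ varies these subspaces are in direct sum (because $\mu\uplus\nu$ recovers $\lambda$). Hence $\tilde{\Delta}(x)=0$ forces $\tilde{\Delta}(x_{\lambda})=0$ for every $\lambda$, i.e. each homogeneous piece $x_{\lambda}$ is itself primitive. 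This yields
\[ \Prim{\ISPW}{n}=\bigoplus_{\lambda\vdash n}\bigl(\Prim{\ISPW}{}\cap A_{\lambda}\bigr). \]

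To conclude I would pick a basis of each summand $\Prim{\ISPW}{}\cap A_{\lambda}$ and let $\B$ be their union. Each $e_{i}\in\B$ then lies in a single $A_{\lambda}$, hence is a linear combination of the rearrangements of one partition, which is precisely the required membership; and since the sum is direct, the partition attached to each $e_{i}$ is unique. The only genuinely substantive point is the verification that the content grading is compatible with $\Delta$, so that primitivity decomposes along it; everything afterwards is the routine splitting of a graded vector space, and notably it does not require computing the dimensions $d_{n}$, only the existence of the decomposition.
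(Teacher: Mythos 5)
Your argument is correct, and it is genuinely different from the one in the paper. The paper's proof is a one-liner: it invokes the fact that $\Prim{\ISPW}{}$, with the bracket induced by the shifted concatenation, is the free Lie algebra on the generators $\underbrace{x_{1}\dots x_{1}}_{m \text{ fois}}$, and the desired basis is then the multihomogeneous basis coming from the multigrading of a free Lie algebra by the number of occurrences of each generator. You instead work entirely at the level of the Hopf algebra: you introduce the grading of $\ISPW$ by the monoid of multisets of positive integers (the content $\{\alpha_{1},\dots,\alpha_{k}\}$ of a word), verify that $\ast$ and $\De$ respect it, and deduce by the standard homogeneity argument that $\Prim{\ISPW}{n}=\bigoplus_{\lambda\vdash n}\big(\Prim{\ISPW}{}\cap A_{\lambda}\big)$, each $A_{\lambda}$ being exactly the span named in the statement. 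All the steps check out: the product concatenates contents, each term of the explicit coproduct distributes the blocks between the two tensor factors so that the two contents are complementary sub-multisets, and the spaces $\bigoplus_{\mu\uplus\nu=\lambda,\ \mu,\nu\neq\emptyset}A_{\mu}\ot A_{\nu}$ for distinct $\lambda$ are indeed in direct sum since $\mu\uplus\nu$ determines $\lambda$. What your route buys is self-containedness: it needs neither the Cartier--Quillen--Milnor--Moore identification nor any structure theory of free Lie algebras, only the coassociativity-free observation that $\tilde{\De}$ is homogeneous for the content grading. What the paper's route buys is brevity, plus the extra information (used elsewhere in the article, e.g.\ for the remark following the proposition about partitions of the form $(1,\dots,1)$) that the dimensions of the summands are those of the multigraded pieces of a free Lie algebra. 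Both proofs are valid.
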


\begin{proof}
	On sait que $\Prim{\ISPW}{}$, muni du crochet $\{-,-\}$ associé au produit de concaténation décalé, est l'algèbre de Lie librement engendrée par l'ensemble $\bigg\{\underbrace{x_{1}\dots x_{1}}_{n \text{ fois}}, n\in\N^{*}\bigg\}$.
\end{proof}

\Ex{1} Considérons les mots tassés stricts croissants de degré 3. On obtient facilement que $\Prim{\ISPW}{3}$ est un espace vectoriel de dimension 2 engendré par les vecteurs $x_{1}x_{1}x_{1}$ et $x_{1}x_{1}x_{2}-x_{1}x_{2}x_{2}$. Le nombre 3 possède trois partitions ((3), (1,2) et (1,1,1)) et quatre compositions ((3), (1,2), (2,1) et (1,1,1)). Le mot $x_{1}x_{1}x_{1}$ est formé d'un bloc de trois lettres identiques. Il est donc associé à la partition (3). La somme $x_{1}x_{1}x_{2}-x_{1}x_{2}x_{2}$ est la combinaison linéaire d'un mot dont la taille des blocs de lettres distinctes est régie par la composition (2,1) et d'un mot régi par la composition (1,2). On associe donc l'élément à la partition (1,2). Les partitions du type $(\underbrace{1,\dots,1}_{n\geq2 \text{ fois}})$ ne sont associées à aucun élément primitif non nul.

\begin{prop}\label{prim}
	Soient $n\in\N^{*}$, $(\alpha_{1},\dots,\alpha_{n})\in(\N^{*})^{n}$ et $(\beta_{1},\dots,\beta_{n})\in(\N^{*})^{n}$ tel que pour $i\neq j$ on ait $\beta_{i}\neq\beta_{j}$. Posons $\rho=\alpha_{2}+\dots+\alpha_{n}$ et $\theta=\alpha_{1}+\dots+\alpha_{n}$. Appelons $\gamma$ le $\theta$-uplet $\gamma=(\gamma_{1},\dots,\gamma_{\theta})=(\underbrace{\beta_{1},\dots,\beta_{1}}_{\alpha_{1} \text{ fois}},\dots,\underbrace{\beta_{n},\dots,\beta_{n}}_{\alpha_{n} \text{ fois}})$. Dans $\ISPW$, on définit l'élément
	$$\Pn{\gamma}=\frac{1}{\alpha_{2}!\dots\alpha_{n}!}\sum_{\sigma\in\Sn{\rho}}\sum_{k=1}^{\alpha_{1}}(-1)^{k-1}\binom{\alpha_{1}-1}{k-1}\sum_{s=0}^{\rho}(-1)^{s}\binom{\rho}{s}w_{\gamma,\tilde{\sigma}_{k,s}}$$
	où
	\begin{align*}
	\tilde{\sigma}_{1,0}=&\begin{pmatrix}1 & \dots & \alpha_{1} & \alpha_{1}+1 & \dots & \theta \\1 & \dots & \alpha_{1} & \sigma(1)+\alpha
	_{1} & \dots & \sigma(\rho)+\alpha_{1}\end{pmatrix}\in\Sn{\theta},\\
	\tilde{\sigma}_{k,0}=&\begin{pmatrix}1 & \dots & \alpha_{1}-k+1 & \alpha_{1}-k+2 & \dots & \theta-k+1 & \theta-k+2 & \dots & \theta \\1 & \dots & \alpha_{1}-k+1 & \sigma(1)+\alpha_{1} & \dots & \sigma(\rho)+\alpha_{1} & \alpha_{1}-k+2 & \dots & \alpha_{1}\end{pmatrix}\in\Sn{\theta},\\
	\tau_{k,s}=& \begin{pmatrix}\alpha_{1}-k+s & \alpha_{1}-k+s+1 \end{pmatrix}\in\Sn{\theta} \text{ pour } 1\leq k \leq \alpha_{1} \text{ et } 1\leq s \leq \rho,\\
	\tilde{\sigma}_{k,s}=&\tilde{\sigma}_{k,s-1}\circ\tau_{k,s}\in\Sn{\theta} \text{ pour } 1\leq k \leq \alpha_{1} \text{ et } 1\leq s \leq \rho,\\
	w_{\gamma,\tilde{\sigma}_{k,s}}=&\underbrace{x_{1}\dots x_{1}}_{\gamma_{\tilde{\sigma}_{k,s}(1)} \text{ fois}}\dots\underbrace{x_{\theta}\dots x_{\theta}}_{\gamma_{\tilde{\sigma}_{k,s}(\theta)} \text{ fois}}.
	\end{align*}
	$\Pn{\gamma}$ est un élément primitif.
\end{prop}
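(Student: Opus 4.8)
The plan is to recognise $\Pn{\gamma}$ as an iterated Lie bracket of the block generators $g_{m}:=\underbrace{x_{1}\dots x_{1}}_{m\text{ fois}}$, so that primitivity becomes automatic. From the coproduct formula each $g_{m}$ is primitive ($\De(g_{m})=g_{m}\ot 1+1\ot g_{m}$), and, as recorded in the proof of Proposition~\ref{formebaseparticuliere}, $\Prim{\ISPW}{}$ equipped with the commutator $\{u,v\}=u\ast v-v\ast u$ is the (free) Lie algebra generated by the $g_{m}$. Since the commutator of two primitives is again primitive, it suffices to rewrite $\Pn{\gamma}$ as a Lie polynomial in $g_{\beta_{1}},\dots,g_{\beta_{n}}$.

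First I would note that every word occurring is a $\ast$-product of generators, $w_{\gamma,\pi}=g_{\gamma_{\pi(1)}}\ast\cdots\ast g_{\gamma_{\pi(\theta)}}$. Writing $a:=g_{\beta_{1}}$ and reading off the permutations $\tilde{\sigma}_{k,s}$, one gets $w_{\gamma,\tilde{\sigma}_{k,s}}=a^{\ast(\alpha_{1}-k)}\ast M_{\sigma,s}\ast a^{\ast(k-1)}$, where $M_{\sigma,s}$ is the middle word $g_{\gamma_{\sigma(1)+\alpha_{1}}}\ast\cdots\ast g_{\gamma_{\sigma(\rho)+\alpha_{1}}}$ (a $\sigma$-rearrangement of the $\rho$ middle generators, namely $\alpha_{i}$ copies of $g_{\beta_{i}}$ for $i=2,\dots,n$) with one extra copy of $a$ inserted just after its first $s$ factors. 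The key point is that the element produced by the inner $s$-sum, $C_{\sigma}:=\sum_{s=0}^{\rho}(-1)^{s}\binom{\rho}{s}M_{\sigma,s}$, does not involve $k$.

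The heart of the argument is then the symmetrised identity, with $c_{i}:=g_{\gamma_{i+\alpha_{1}}}$,
\[
\frac{1}{\alpha_{2}!\cdots\alpha_{n}!}\sum_{\sigma\in\Sn{\rho}}C_{\sigma}=\frac{1}{\alpha_{2}!\cdots\alpha_{n}!}\sum_{\sigma\in\Sn{\rho}}[\dots[[a,c_{\sigma(1)}],c_{\sigma(2)}],\dots,c_{\sigma(\rho)}],
\]
stating that the symmetrised alternating insertion sum collapses to a symmetrised left-nested bracket, hence to a Lie element. This is the step I expect to be the main obstacle: it fails term by term (already for $\rho=2$ a single $\sigma$ only yields $[a,c_{1}]\ast c_{2}-c_{1}\ast[a,c_{2}]$, which is not a Lie element), so both the summation over $\Sn{\rho}$ and the normalising factor $1/(\alpha_{2}!\cdots\alpha_{n}!)$ are essential. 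I would prove it by induction on $\rho$, peeling off the outermost bracketing and matching the two recursions via Pascal's rule $\binom{\rho}{s}=\binom{\rho-1}{s}+\binom{\rho-1}{s-1}$.

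Finally the outer $k$-sum is an adjoint power. Setting $C:=\frac{1}{\alpha_{2}!\cdots\alpha_{n}!}\sum_{\sigma}C_{\sigma}$, which is $k$-independent, one has
\[
\Pn{\gamma}=\sum_{k=1}^{\alpha_{1}}(-1)^{k-1}\binom{\alpha_{1}-1}{k-1}\,a^{\ast(\alpha_{1}-k)}\ast C\ast a^{\ast(k-1)}=\operatorname{ad}_{a}^{\,\alpha_{1}-1}(C),
\]
by the standard expansion $\operatorname{ad}_{a}^{\,m}(C)=\sum_{j=0}^{m}(-1)^{j}\binom{m}{j}a^{\ast(m-j)}\ast C\ast a^{\ast j}$ with $m=\alpha_{1}-1$ (and $j=k-1$). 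Since $C$ is a Lie element by the previous step and $\operatorname{ad}_{a}=\{a,-\}$ preserves $\Prim{\ISPW}{}$, the element $\Pn{\gamma}=\operatorname{ad}_{g_{\beta_{1}}}^{\,\alpha_{1}-1}(C)$ is a Lie polynomial in $g_{\beta_{1}},\dots,g_{\beta_{n}}$, hence primitive. As a cross-check one may instead verify $\tilde{\De}(\Pn{\gamma})=0$ directly, expanding $\tilde{\De}(w_{\gamma,\pi})$ into proper splittings of the factors (using that $\De$ is an algebra morphism and the $g_{\beta_{i}}$ are primitive) and cancelling coefficients through $\sum_{s}(-1)^{s}\binom{\rho}{s}=0$; this route is correct but computationally much heavier.
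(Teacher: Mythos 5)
Your proof is correct, and it reaches the conclusion by a genuinely different route from the paper's. The paper also reduces everything to Lie brackets of the primitive generators $g_{m}=\underbrace{x_{1}\dots x_{1}}_{m}$, but it does so in three steps: it first asserts that the multiplicity-free element $\Pn{(1,\dots,\theta)}$ is an iterated bracket, then transports primitivity to the case $\alpha_{1}=1$ with general $\beta$ and general $\alpha_{2},\dots,\alpha_{n}$ via the substitution Hopf-algebra endomorphism $\Lambda_{\beta}$ (which is where the factor $1/(\alpha_{2}!\cdots\alpha_{n}!)$ is absorbed), and finally runs an induction on $\alpha_{1}$ through $\Pn{\gamma_{i+1}}=\{g_{\beta_{1}},\Pn{\gamma_{i}}\}$. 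Your argument replaces the first two steps by a direct combinatorial identity --- the symmetrised alternating-insertion sum equals the symmetrised left-nested bracket $\sum_{\sigma}[\dots[[a,c_{\sigma(1)}],c_{\sigma(2)}],\dots,c_{\sigma(\rho)}]$ --- proved with repeated letters allowed, and it replaces the induction on $\alpha_{1}$ by its closed form, the expansion of $\operatorname{ad}_{a}^{\alpha_{1}-1}$ (your $k$-sum is exactly the paper's repeated bracketing with $g_{\beta_{1}}$, unrolled). I checked your reading of the permutations $\tilde{\sigma}_{k,s}$ (the transpositions $\tau_{k,s}$ do walk the extra copy of $a$ rightward through the middle block, so $w_{\gamma,\tilde{\sigma}_{k,s}}=a^{\ast(\alpha_{1}-k)}\ast c_{\sigma(1)}\ast\dots\ast c_{\sigma(s)}\ast a\ast c_{\sigma(s+1)}\ast\dots\ast c_{\sigma(\rho)}\ast a^{\ast(k-1)}$), and your key identity holds: a clean way to see it, alternative to your induction via Pascal's rule, is to expand each nested bracket as $\sum_{J}(-1)^{|J|}c_{j_{|J|}}\cdots c_{j_{1}}\,a\,c_{i_{1}}\cdots c_{i_{\rho-|J|}}$ and count that each ordered splitting arises from exactly $\binom{\rho}{s}$ pairs $(\sigma,J)$. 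What the paper's route buys is brevity (primitivity is transported for free by $\Lambda_{\beta}$, so the combinatorial identity only needs to be checked in the multiplicity-free case); what yours buys is an explicit, self-contained Lie-polynomial expression $\Pn{\gamma}=\operatorname{ad}_{g_{\beta_{1}}}^{\alpha_{1}-1}(C)$, which moreover matches the paper's worked example $\Pn{(1,1,2,2)}=\{g_{1},\{\{g_{1},g_{2}\},g_{2}\}\}$ and makes precise the shape of the bracket (a symmetrised sum of left-nested brackets, not a single one) that the paper's displayed base-case formula leaves somewhat loose.
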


\begin{proof}
	Soit $n\in\N^{*}$ et notons $\{-,-\}$ le crochet de Lie associé au produit de $\ISPW$. Commençons par montrer que $\Pn{(1,\dots,n)}$ est primitif. Ceci est vrai car $$\Pn{(1,\dots,n)}=\{\{\dots\{x_{1},x_{1}x_{1}\}\dots\},\underbrace{x_{1}\dots x_{1}}_{n\text{ fois}}\}.$$
	
	Soient maintenant $(\alpha_{1},\dots,\alpha_{n})\in(\N^{*})^{n}$ et $(\beta_{1},\dots,\beta_{n})\in(\N^{*})^{n}$ tel que pour $i\neq j$ on ait $\beta_{i}\neq\beta_{j}$. 
	Définissons les entiers $\rho=\alpha_{2}+\dots+\alpha_{n}$. Pour tout entier $i$ de l'intervalle $\llbracket1,\alpha_{1}\rrbracket$ on pose $\theta_{i}=i+\alpha_{2}+\dots+\alpha_{n}$ et on appelle $\gamma_{i}$ le $\theta_{i}$-uplet $\gamma_{i}=(\underbrace{\beta_{1},\dots,\beta_{1}}_{i \text{ fois}},\underbrace{\beta_{2},\dots,\beta_{2}}_{\alpha_{2} \text{ fois}},\dots,\underbrace{\beta_{n},\dots,\beta_{n}}_{\alpha_{n} \text{ fois}})$. 
	
	On considère l'endomorphisme d'algèbres de Hopf 
	$$\Lambda_{\beta}:\left\{\begin{array}{rcl}\ISPW &\longrightarrow &\ISPW \\ \underbrace{x_{1}\dots x_{1}}_{k \text{ fois}} &\longrightarrow & \begin{cases} \underbrace{x_{1}\dots x_{1}}_{\beta_{k} \text{ fois}} & \mbox{ si } 1\leq k\leq n,\\ \underbrace{x_{1}\dots x_{1}}_{k \text{ fois}} & \mbox{ sinon, }\end{cases} \end{array}\right.$$	
	
	Il est immédiat que $\Pn{\gamma_{1}}=\frac{1}{\alpha_{2}!\dots\alpha_{n}!}\Lambda_{\beta}(\Pn{(1,\dots,\theta_{1})})$. Pour tout entier $i$ de  l'intervalle $\llbracket1,\alpha_{1}-1\rrbracket$ on a
	$$\Pn{\gamma_{i+1}}=\{\underbrace{x_{1}\dots x_{1}}_{\beta_{1} \text{ fois}},\Pn{\gamma_{i}} \}.$$
\end{proof}	
\Ex{}
\begin{align*}
\Pn{(1,1,2,2)}=&x_{1}x_{2}x_{3}x_{3}x_{4}x_{4}-2x_{1}x_{2}x_{2}x_{3}x_{4}x_{4}+2x_{1}x_{1}x_{2}x_{3}x_{3}x_{4}-x_{1}x_{1}x_{2}x_{2}x_{3}x_{4},\\
\Pn{(2,2,1,3)}=&x_{1}x_{1}x_{2}x_{2}x_{3}x_{4}x_{4}x_{4}-2x_{1}x_{1}x_{2}x_{3}x_{3}x_{4}x_{4}x_{4}+2x_{1}x_{2}x_{2}x_{3}x_{3}x_{3}x_{4}x_{4}\\-&x_{1}x_{2}x_{2}x_{2}x_{3}x_{3}x_{4}x_{4}+x_{1}x_{1}x_{2}x_{2}x_{3}x_{3}x_{3}x_{4}-2x_{1}x_{1}x_{2}x_{2}x_{2}x_{3}x_{3}x_{4}\\
+&2x_{1}x_{1}x_{1}x_{2}x_{2}x_{3}x_{4}x_{4}+x_{1}x_{1}x_{1}x_{2}x_{3}x_{3}x_{4}x_{4}.
\end{align*}
\Rq{1}	En degré $n\leq6$, la proposition \ref{prim} permet d'obtenir une base $\Prim{\ISPW}{n}$. En degré 7, cette proposition ne donne que 17 vecteurs linéairement indépendants sur les 18 nécessaires pour former une base. 

\begin{prop}\label{prim_Lambda}
	Soient $n\in\N^{*}$, $(\alpha_{1},\dots,\alpha_{n})\in(\N^{*})^{n}$ et $(\beta_{1},\dots,\beta_{n})\in(\N^{*})^{n}$ tel que pour $i\neq j$ on ait $\beta_{i}\neq\beta_{j}$. Posons $\theta=\alpha_{1}+\dots+\alpha_{n}$ et $\gamma=(\gamma_{1},\dots,\gamma_{\theta})=(\underbrace{\beta_{1},\dots,\beta_{1}}_{\alpha_{1} \text{ fois}},\dots,\underbrace{\beta_{n},\dots,\beta_{n}}_{\alpha_{n} \text{ fois}})$. Dans $\ISPW$, on définit l'élément
	$$\Pn{\Lambda_{\gamma}}=\cfrac{1}{\alpha_{1}!\dots\alpha_{n}!}\sum_{\sigma\in\Sn{\theta}}\sum_{i=1}^{\alpha_{1}}(-1)^{\sigma^{-1}(i)-1}\binom{\theta-1}{\sigma^{-1}(i)-1}w_{\gamma,\sigma}$$
	où $$w_{\gamma,\sigma}=\underbrace{x_{1}\dots x_{1}}_{\gamma_{\sigma(1)} \text{ fois}}\dots\underbrace{x_{\theta}\dots x_{\theta}}_{\gamma_{\sigma(\theta)} \text{ fois}}.$$
	L'élément $\Pn{\Lambda_{\gamma}}$ est primitif dans $\ISPW$.
\end{prop}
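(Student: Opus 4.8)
The plan is to realize $\Pn{\Lambda_\gamma}$ as the image, under a morphism of Hopf algebras, of an element of a free tensor bialgebra on \emph{distinct} letters that is visibly a sum of Lie brackets. Recall first, from the proof of Proposition~\ref{formebaseparticuliere}, that the primitive elements of $\ISPW$ are exactly the elements of the free Lie algebra generated by the blocks $\underbrace{x_{1}\dots x_{1}}_{n}$ ($n\in\N^{*}$) for the bracket $\{-,-\}$ attached to the shifted concatenation; in particular every iterated bracket of such generators is primitive, so it suffices to exhibit $\Pn{\Lambda_\gamma}$ as a combination of such brackets. The obstruction to reading the formula for $\Pn{\Lambda_\gamma}$ directly as a bracket is that the generator $\underbrace{x_{1}\dots x_{1}}_{\beta_{1}}$ occurs $\alpha_{1}$ times in each $w_{\gamma,\sigma}$, so I first lift everything to formally distinct letters.

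Let $\tens{z_{1},\dots,z_{\theta}}$ be the free associative algebra on $\theta$ distinct primitive letters, equipped with concatenation and the unshuffle coproduct $\De(z_{j})=z_{j}\ot 1+1\ot z_{j}$; its primitives are again the Lie elements. The combinatorial heart of the argument is the identity, for each fixed $i$,
\begin{equation*}
\sum_{\sigma\in\Sn{\theta}}(-1)^{\sigma^{-1}(i)-1}\binom{\theta-1}{\sigma^{-1}(i)-1}\,z_{\sigma(1)}\dots z_{\sigma(\theta)}=\sum_{\tau}\,[\dots[[z_{i},z_{\tau(1)}],z_{\tau(2)}],\dots,z_{\tau(\theta-1)}],
\end{equation*}
where $\tau$ runs over the orderings of the $\theta-1$ letters distinct from $z_{i}$. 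The right-hand side is a sum of left-normed brackets, hence primitive. To prove the identity I expand a single left-normed bracket as
\begin{equation*}
[\dots[[z_{i},w_{1}],\dots],w_{\theta-1}]=\sum_{I+J=\{1,\dots,\theta-1\}}(-1)^{|I|}\,\overleftarrow{w_{I}}\,z_{i}\,w_{J},
\end{equation*}
the reversal $\overleftarrow{w_{I}}$ coming from the fact that each commutator $[X,w]=Xw-wX$ pushes a prepended letter to the far left. Summing over all orderings $\tau$ and fixing a target word with $z_{i}$ in position $j$, the admissible pairs $(\tau,I)$ are exactly the shuffles of the reversed prefix with the suffix: there are $\binom{\theta-1}{j-1}$ of them, each with sign $(-1)^{j-1}$, and since $\sigma^{-1}(i)=j$ records the position of $z_{i}$ this reproduces the left-hand coefficient. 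This coefficient count is the step I expect to require the most care.

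Summing the identity over $i=1,\dots,\alpha_{1}$ shows that
\begin{equation*}
\Pi=\sum_{i=1}^{\alpha_{1}}\sum_{\sigma\in\Sn{\theta}}(-1)^{\sigma^{-1}(i)-1}\binom{\theta-1}{\sigma^{-1}(i)-1}\,z_{\sigma(1)}\dots z_{\sigma(\theta)}
\end{equation*}
is primitive in $\tens{z_{1},\dots,z_{\theta}}$. Finally let $\phi\colon\tens{z_{1},\dots,z_{\theta}}\to\ISPW$ be the morphism determined by $z_{j}\mapsto\underbrace{x_{1}\dots x_{1}}_{\gamma_{j}}$. Each image is a primitive generator, so $\phi$ is a morphism of Hopf algebras and therefore maps primitives to primitives. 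Since $\phi(z_{\sigma(1)}\dots z_{\sigma(\theta)})=w_{\gamma,\sigma}$, we obtain $\phi(\Pi)=\alpha_{1}!\dots\alpha_{n}!\,\Pn{\Lambda_\gamma}$, whence $\Pn{\Lambda_\gamma}=\frac{1}{\alpha_{1}!\dots\alpha_{n}!}\phi(\Pi)$ is primitive. This parallels the use of the endomorphism $\Lambda_{\beta}$ in Proposition~\ref{prim}, the new point being the detour through distinct letters, which turns the binomial coefficients into shuffle counts.
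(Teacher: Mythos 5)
Your proof is correct, and its key lemma is genuinely different from the one the paper uses. The paper's proof first observes that for $\gamma=(1,\dots,\theta)$ the element $\Pn{\Lambda_{(1,\dots,\theta)}}$ coincides with $\Pn{(1,\dots,\theta)}$, whose primitivity was already established in Proposition \ref{prim}, and then transports this single primitive along the Hopf endomorphism $\Lambda_{\gamma}$ of $\ISPW$ sending $\underbrace{x_{1}\dots x_{1}}_{k}$ to $\underbrace{x_{1}\dots x_{1}}_{\gamma_{k}}$; the only delicate point there is the multiplicity count giving $\Lambda_{\gamma}(\Pn{\Lambda_{(1,\dots,\theta)}})=(\alpha_{1}-1)!\alpha_{2}!\dots\alpha_{n}!\,\Pn{\Lambda_{\gamma}}$, since each word $w_{\gamma,\sigma}$ is hit by a whole coset of the Young subgroup $\Sn{\alpha_{1}}\times\dots\times\Sn{\alpha_{n}}$. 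You avoid both ingredients: your identity $\sum_{\sigma}(-1)^{\sigma^{-1}(i)-1}\binom{\theta-1}{\sigma^{-1}(i)-1}z_{\sigma(1)}\dots z_{\sigma(\theta)}=\sum_{\tau}[\dots[[z_{i},z_{\tau(1)}],\dots],z_{\tau(\theta-1)}]$ is proved from scratch via the standard expansion of a left-normed bracket, and your shuffle count of the pairs $(\tau,I)$ contributing to a word with $z_{i}$ in position $j$ (namely $\binom{\theta-1}{j-1}$ shuffles, each with sign $(-1)^{j-1}$) is exact. The push-forward along $\phi:z_{j}\mapsto\underbrace{x_{1}\dots x_{1}}_{\gamma_{j}}$ is legitimate (primitive generators go to primitive generators, so $\phi$ is a bialgebra morphism), and because you sum over all of $\Sn{\theta}$ and all $i\leq\alpha_{1}$ before specializing, the normalization $\phi(\Pi)=\alpha_{1}!\dots\alpha_{n}!\,\Pn{\Lambda_{\gamma}}$ is immediate from the definition, with no coset bookkeeping. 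What each approach buys: the paper's argument is shorter but leans entirely on Proposition \ref{prim}; yours is self-contained, and as a bonus it produces an explicit expression of $\alpha_{1}!\dots\alpha_{n}!\,\Pn{\Lambda_{\gamma}}$ as a sum of left-normed Lie brackets of the generators, which is finer information than mere primitivity.
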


\begin{proof}
	Soient $n\in\N^{*}$, $(\alpha_{1},\dots,\alpha_{n})\in(\N^{*})^{n}$ et $(\beta_{1},\dots,\beta_{n})\in(\N^{*})^{n}$ tel que pour $i\neq j$ on ait $\beta_{i}\neq\beta_{j}$. Posons $\theta=\alpha_{1}+\dots+\alpha_{n}$ et $\gamma=(\gamma_{1},\dots,\gamma_{\theta})=(\underbrace{\beta_{1},\dots,\beta_{1}}_{\alpha_{1} \text{ fois}},\dots,\underbrace{\beta_{n},\dots,\beta_{n}}_{\alpha_{n} \text{ fois}})$. 
	Considérons l'élément $\Pn{\Lambda_{(1,\dots,\theta)}}$.
	on a: $$\Pn{\Lambda_{(1,\dots,\theta)}}=\sum_{\sigma\in\Sn{\theta}}(-1)^{\sigma^{-1}(1)-1}\binom{\theta-1}{\sigma^{-1}(1)-1}\underbrace{x_{1}\dots x_{1}}_{\sigma(1) \text{ fois}}\dots\underbrace{x_{\theta}\dots x_{\theta}}_{\sigma(\theta) \text{ fois}}=\Pn{(1,\dots,\theta)}.$$
	Ainsi $\Pn{\Lambda_{(1,\dots,\theta)}}$ est un élément primitif de $\ISPW$.

	Grâce à l'application $\Lambda_{\gamma}$ on a: $\Pn{\Lambda_{\gamma}}=\cfrac{1}{(\alpha_{1}-1)!\alpha_{2}!\dots\alpha_{n}!}\Lambda_{\gamma}(\Pn{\Lambda_{(1,\dots,\theta)}})$. L'élément $\Pn{\Lambda_{\gamma}}$ est donc primitif dans $\ISPW$.
\end{proof}

\Ex{1} Posons $\gamma=(1,1,3,3,3)$. L'élément $\Pn{\Lambda_{\gamma}}$ est primitif dans $\ISPW$. 
\begin{align*}
	\Pn{\Lambda_{\gamma}}=&-3x_{1}x_{2}x_{3}x_{3}x_{3}x_{4}x_{4}x_{4}x_{5}x_{5}x_{5}+7x_{1}x_{2}x_{2}x_{2}x_{3}x_{4}x_{4}x_{4}x_{5}x_{5}x_{5}+2x_{1}x_{1}x_{1}x_{2}x_{3}x_{4}x_{4}x_{4}x_{5}x_{5}x_{5}\\
	-&3x_{1}x_{2}x_{2}x_{2}x_{3}x_{3}x_{3}x_{4}x_{5}x_{5}x_{5}-8x_{1}x_{1}x_{1}x_{2}x_{3}x_{3}x_{3}x_{4}x_{5}x_{5}x_{5}+2x_{1}x_{1}x_{1}x_{2}x_{2}x_{2}x_{3}x_{4}x_{5}x_{5}x_{5}\\
	+&2x_{1}x_{2}x_{2}x_{2}x_{3}x_{3}x_{3}x_{4}x_{4}x_{4}x_{5}-3x_{1}x_{1}x_{1}x_{2}x_{3}x_{3}x_{3}x_{4}x_{4}x_{4}x_{5}+7x_{1}x_{1}x_{1}x_{2}x_{2}x_{2}x_{3}x_{4}x_{4}x_{4}x_{5}\\
	-&3x_{1}x_{1}x_{1}x_{2}x_{2}x_{2}x_{3}x_{3}x_{3}x_{4}x_{5}.
\end{align*}

\Rq{} \begin{enumerate}
	\item Soient $n\in\N^{*}$, $(\alpha_{1},\dots,\alpha_{n})\in(\N^{*})^{n}$ et $(\beta_{1},\dots,\beta_{n})\in(\N^{*})^{n}$ tel que pour $i\neq j$ on ait $\beta_{i}\neq\beta_{j}$. Posons $\gamma=(\underbrace{\beta_{1},\dots,\beta_{1}}_{\alpha_{1} \text{ fois}},\dots,\underbrace{\beta_{n},\dots,\beta_{n}}_{\alpha_{n} \text{ fois}})$.
	\begin{enumerate}
		\item Si $\alpha_{1}=1$, les éléments $\Pn{\Lambda_{\gamma}}$ et $\Pn{\gamma}$ sont égaux.
		\item Si ($n=2$ et $\alpha_{1}=\alpha_{2}=2$) ou ($n=3$, $\alpha_{1}=2$ et $\alpha_{2}=\alpha_{3}=1$), \emph{ i.e.} $\gamma=(\beta_{1},\beta_{1},\beta_{2},\beta_{2})$ ou $\gamma=(\beta_{1},\beta_{1},\beta_{2},\beta_{3})$, on a alors: $\Pn{\Lambda_{\gamma}}=-2\Pn{\gamma}$.
		\item Si $n=2$ et $\alpha_{2}=1$, \emph{ i.e.} $\gamma=(\underbrace{\beta_{1},\dots,\beta_{1}}_{\alpha_{1} \text{ fois}},\beta_{2})$, on a alors $\Pn{\Lambda_{\gamma}}=(-1)^{\alpha_{1}+1}\Pn{\gamma}$.
		\item Si ($n=2$, $\alpha_{1}\geq2$ et $\alpha_{2}\geq3$) ou ($n=3$, $\alpha_{1}\geq2$ et $\alpha_{2}+\alpha_{3}\neq2$) ou ($n\geq4$ et $\alpha_{1}\geq2$), il n'existe pas de scalaire $k\in\K^{*}$ tel que $\Pn{\Lambda_{\gamma}}=k\Pn{\gamma}$. En effet, en posant $\alpha_{3}=0$ dans le cas $n=2$ la multiplicité du mot tassé 
		$$\underbrace{x_{1}\dots x_{1}}_{\beta_{2} \text{ fois}}\ast(\underbrace{x_{1}\dots x_{1}}_{\beta_{1} \text{ fois}})^{\alpha_{1}}\ast(\underbrace{x_{1}\dots x_{1}}_{\beta_{2} \text{ fois}})^{\alpha_{2}-1}\ast(\underbrace{x_{1}\dots x_{1}}_{\beta_{3} \text{ fois}})^{\alpha_{3}}\ast\dots\ast(\underbrace{x_{1}\dots x_{1}}_{\beta_{n} \text{ fois}})^{\alpha_{n}}$$
		est nulle dans $\Pn{\gamma}$ alors quelle est non nulle, puisqu'égale à $\displaystyle(-1)^{\alpha_{1}}\binom{\theta-2}{\alpha_{1}}-1$, dans $\Pn{\Lambda_{\gamma}}$.
		\item Si ($n=2$, $\alpha_{1}\geq3$ et $\alpha_{2}=2$) ou ($n=3$, $\alpha_{1}\geq3$ et $\alpha_{2}=\alpha_{3}=1$), il n'existe pas de scalaire $k\in\K^{*}$ tel que $\Pn{\Lambda_{\gamma}}=k\Pn{\gamma}$. En effet, en posant $\alpha_{3}=0$ dans le cas $n=2$, la multiplicité du mot tassé 
		$$\underbrace{x_{1}\dots x_{1}}_{\beta_{1} \text{ fois}}\ast\underbrace{x_{1}\dots x_{1}}_{\beta_{2} \text{ fois}}\ast(\underbrace{x_{1}\dots x_{1}}_{\beta_{1} \text{ fois}})^{\alpha_{1}-1}\ast(\underbrace{x_{1}\dots x_{1}}_{\beta_{1} \text{ fois}})^{\alpha_{2}-1}\ast(\underbrace{x_{1}\dots x_{1}}_{\beta_{3} \text{ fois}})^{\alpha_{3}}$$
		est nulle dans $\Pn{\gamma}$ alors quelle est non nulle, puisqu'égale à $\theta-1+(-1)^{\theta}$, dans $\Pn{\Lambda_{\gamma}}$.
	\end{enumerate}
	\item La famille définie dans la proposition \ref{prim_Lambda} permet d'engendrer $\Prim{\ISPW}{n}$ pour $n$ inférieur ou égal à $6$ mais ne permet pas d'engendrer $\Prim{\ISPW}{7}$. Il en est de même si l'on considère la réunion des familles définies dans les propositions \ref{prim} et \ref{prim_Lambda}.
\end{enumerate}

\subsubsection{Algèbres de Hopf $\dual{\ISPW}$}
L'objectif ici est de présenter les opérations de l'algèbre de Hopf $\dual{\ISPW}$.
Pour cela, notons $(Z_{\underbrace{x_{1}\dots x_{1}}_{\alpha_{1} \text{ fois}}\dots\underbrace{x_{n}\dots x_{n}}_{\alpha_{n} \text{ fois}}})_{(n,\alpha_{1},\dots,\alpha_{n})\in(\N^{*})^{n+1}}$ la base duale des mots tassés stricts croissants.
\begin{prop}
	Le coproduit de la cogèbre $\dual{\ISPW}$ est donné par l'application suivante:
	$$\De:\left\{\begin{array}{rcl} \dual{\ISPW} &\longrightarrow & \dual{\ISPW}\ot\dual{\ISPW}  \\ 
	Z_{\underbrace{x_{1}\dots x_{1}}_{\alpha_{1} \text{ fois}}\dots\underbrace{x_{n}\dots x_{n}}_{\alpha_{n} \text{ fois}}} &\longrightarrow & \sum\limits_{i=0}^{n}Z_{\underbrace{x_{1}\dots x_{1}}_{\alpha_{1} \text{ fois}}\dots\underbrace{x_{i}\dots x_{i}}_{\alpha_{i} \text{ fois}}}\otimes Z_{\underbrace{x_{1}\dots x_{1}}_{\alpha_{i+1} \text{ fois}}\dots\underbrace{x_{n-i}\dots x_{n-i}}_{\alpha_{n} \text{ fois}}}. \end{array}\right.$$	
\end{prop}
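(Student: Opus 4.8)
The plan is to obtain the coproduct of $\dual{\ISPW}$ as the transpose of the product of $\ISPW$, exactly as was done for $\dual{\WMat}$. Writing $w=\underbrace{x_{1}\dots x_{1}}_{\alpha_{1} \text{ fois}}\dots\underbrace{x_{n}\dots x_{n}}_{\alpha_{n} \text{ fois}}$ and evaluating $\De(Z_{w})$ against an arbitrary tensor $m_{1}\ot m_{2}$ of basis elements, the defining duality gives
$$\De(Z_{w})(m_{1}\ot m_{2})=(Z_{w}\circ\ast)(m_{1}\ot m_{2})=Z_{w}(m_{1}\ast m_{2}).$$
So the whole computation reduces to understanding the product $m_{1}\ast m_{2}$ on basis elements.

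First I would record that on this distinguished basis the product is nothing but concatenation of compositions: if $m_{1}$ and $m_{2}$ are associated with $(\beta_{1},\dots,\beta_{p})$ and $(\gamma_{1},\dots,\gamma_{q})$, then the shifted concatenation $m_{1}\ast m_{2}=m_{1}T_{\sup(m_{1})}(m_{2})$ is the single basis element associated with $(\beta_{1},\dots,\beta_{p},\gamma_{1},\dots,\gamma_{q})$. The key contrast with $\dual{\WMat}$ is that here the product of two basis vectors is always a single basis vector (no shuffle appears, since the letters are strictly increasing and all nonzero), so no nontrivial multiplicity enters the dual.

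Next, evaluating $Z_{w}$ on this single basis element yields a Kronecker delta: $Z_{w}(m_{1}\ast m_{2})$ equals $1$ precisely when the composition of $w$, namely $(\alpha_{1},\dots,\alpha_{n})$, factors as the concatenation of the compositions of $m_{1}$ and $m_{2}$, and $0$ otherwise. Such factorizations are in bijection with the cut points $i\in\llbracket 0,n\rrbracket$: the pair $(m_{1},m_{2})$ must consist of the basis elements attached to $(\alpha_{1},\dots,\alpha_{i})$ and $(\alpha_{i+1},\dots,\alpha_{n})$ respectively (the value $i=0$ giving $m_{1}=1$, the value $i=n$ giving $m_{2}=1$). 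Summing over these factorizations yields exactly the deconcatenation coproduct of the statement, once the right-hand factor $(\alpha_{i+1},\dots,\alpha_{n})$ is rewritten as the increasing strict packed word $\underbrace{x_{1}\dots x_{1}}_{\alpha_{i+1} \text{ fois}}\dots\underbrace{x_{n-i}\dots x_{n-i}}_{\alpha_{n} \text{ fois}}$ obtained by relabelling its blocks from $1$ to $n-i$.

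There is no genuine obstacle here; the only point deserving care is the claim that $\ast$ is monomial on the basis, i.e. sends a pair of basis vectors to a single basis vector with coefficient $1$. I would justify this by recalling that $\ISPW$ is freely generated by the blocks $\underbrace{x_{1}\dots x_{1}}_{k \text{ fois}}$, $k\in\N^{*}$, so its underlying algebra is the free associative algebra on these generators and its product is literally concatenation of words in this alphabet. Dualizing a concatenation (tensor) algebra always produces the deconcatenation coproduct, which is precisely the formula we have established.
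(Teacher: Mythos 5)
Your argument is correct and is essentially the approach the paper takes: the paper states this proposition without a separate proof, but its proof of the analogous coproduct formula for $\dual{\WMat}$ is exactly your computation (evaluate $\De(Z_{w})$ on $m_{1}\ot m_{2}$, reduce to $Z_{w}(m_{1}\ast m_{2})$, and enumerate the factorizations of $w$), and restricted to $\ISPW$ the irreducible factors are precisely the blocks $\underbrace{x_{1}\dots x_{1}}_{\alpha_{j}}$, so the factorizations correspond to the cut points $i\in\llbracket 0,n\rrbracket$ as you say. Your observation that $\ISPW$ is free on these blocks, so that the product is monomial on the basis and the dual is the deconcatenation coproduct, is exactly the justification the paper relies on.
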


\Ex{1}
$$\De(Z_{x_{1}x_{1}x_{2}x_{3}x_{3}x_{3}})=Z_{x_{1}x_{1}x_{2}x_{3}x_{3}x_{3}}\ot1 +Z_{x_{1}x_{1}}\ot Z_{x_{1}x_{2}x_{2}x_{2}}+Z_{x_{1}x_{1}x_{2}}\ot Z_{x_{1}x_{1}x_{1}}+1\ot Z_{x_{1}x_{1}x_{2}x_{3}x_{3}x_{3}}.$$

\begin{prop}
	Le produit de l'algèbre $\dual{\ISPW}$ est défini par l'application: 
	$$m:\left\{\begin{array}{rcl} \dual{\ISPW}\ot\dual{\ISPW} &\longrightarrow & \dual{\ISPW}  \\ 
	Z_{\underbrace{x_{1}\dots x_{1}}_{\alpha_{1} \text{ fois}}\dots\underbrace{x_{p}\dots x_{p}}_{\alpha_{p} \text{ fois}}}\ot Z_{\underbrace{x_{1}\dots x_{1}}_{\beta_{1} \text{ fois}}\dots\underbrace{x_{q}\dots x_{q}}_{\beta_{q} \text{ fois}}} &\longrightarrow & \sum\limits_{\tau\in Bat(p,q)}Z_{\underbrace{x_{1}\dots x_{1}}_{\gamma_{\tau^{-1}(1)} \text{ fois}}\dots\underbrace{x_{p+q}\dots x_{p+q}}_{\gamma_{\tau^{-1}(p+q)} \text{ fois}}} \end{array}\right.$$
	où $$\gamma_{i}=\begin{cases}
	\alpha_{i} & \mbox{ si } i\in\{1,\dots,p\}, \\
	\beta_{i-p} & \mbox{ si } i\in\{p+1,\dots,p+q\}. 
	\end{cases}$$
\end{prop}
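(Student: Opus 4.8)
Le plan est de transposer le coproduit de $\ISPW$ afin d'obtenir le produit de $\dual{\ISPW}$, en exploitant pour tout mot tass� strict croissant $w$ la dualit� $\langle Z_{w_{1}}Z_{w_{2}},w\rangle=\langle Z_{w_{1}}\ot Z_{w_{2}},\De(w)\rangle$. J'�cris $w_{1}$ (resp. $w_{2}$) sous la forme du mot de tailles de blocs $(\alpha_{1},\dots,\alpha_{p})$ (resp. $(\beta_{1},\dots,\beta_{q})$) et je teste contre un mot $w$ dont les tailles de blocs sont $(\delta_{1},\dots,\delta_{m})$. Pour un ensemble d'indices $J$, notons $\pi_{J}(w)$ le mot tass� strict croissant dont les tailles de blocs sont les $\delta_{j}$, $j\in J$, rang�es dans l'ordre croissant de $j$. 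En injectant la formule explicite du coproduit de $\ISPW$, qui r�partit les $m$ blocs de $w$ selon une union disjointe $I+U=\{1,\dots,m\}$, on obtient $\langle Z_{w_{1}}Z_{w_{2}},w\rangle=\sum_{I+U=\{1,\dots,m\}}\langle Z_{w_{1}},\pi_{I}(w)\rangle\,\langle Z_{w_{2}},\pi_{U}(w)\rangle$. Chaque facteur �tant un symbole de Kronecker, ce nombre compte exactement les partitions $I+U$ telles que $\pi_{I}(w)=w_{1}$ et $\pi_{U}(w)=w_{2}$.

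Je montrerais ensuite que ce coefficient est nul d�s que $m\neq p+q$, puisqu'une partition admissible force $|I|=p$ et $|U|=q$; et que, lorsque $m=p+q$, elle impose que les tailles de blocs aux positions de $I$ (resp. de $U$) soient exactement $\alpha_{1},\dots,\alpha_{p}$ (resp. $\beta_{1},\dots,\beta_{q}$). L'�tape centrale est alors l'�tablissement d'une bijection entre les partitions admissibles $I+U=\{1,\dots,p+q\}$ et les battages $\tau\in Bat(p,q)$: � la partition $I=\{i_{1}<\dots<i_{p}\}$, $U=\{u_{1}<\dots<u_{q}\}$, j'associe la permutation $\tau$ d�finie par $\tau(j)=i_{j}$ pour $1\leq j\leq p$ et $\tau(p+j)=u_{j}$ pour $1\leq j\leq q$. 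Cette permutation est croissante sur $\{1,\dots,p\}$ et sur $\{p+1,\dots,p+q\}$, c'est donc bien un battage, et la construction r�ciproque $I=\{\tau(1),\dots,\tau(p)\}$, $U=\{\tau(p+1),\dots,\tau(p+q)\}$ lui est clairement inverse.

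Il resterait � identifier le mot $w$ produit par cette bijection. En posant $\gamma=(\alpha_{1},\dots,\alpha_{p},\beta_{1},\dots,\beta_{q})$, les conditions $\pi_{I}(w)=w_{1}$ et $\pi_{U}(w)=w_{2}$ se traduisent par $\delta_{i_{j}}=\alpha_{j}=\gamma_{j}$ et $\delta_{u_{j}}=\beta_{j}=\gamma_{p+j}$, ce qui se condense en $\delta_{i}=\gamma_{\tau^{-1}(i)}$ pour tout $i$: le mot $w$ est donc exactement le mot $w^{(\tau)}$ de tailles de blocs $\gamma_{\tau^{-1}(1)},\dots,\gamma_{\tau^{-1}(p+q)}$ figurant dans l'�nonc�. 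En sommant sur tous les mots $w$, le coefficient de $Z_{w}$ dans $Z_{w_{1}}Z_{w_{2}}$ �gale le nombre de battages $\tau$ donnant $w^{(\tau)}=w$, d'o� $Z_{w_{1}}Z_{w_{2}}=\sum_{\tau\in Bat(p,q)}Z_{w^{(\tau)}}$. L'obstacle principal est ici davantage de rigueur que de fond: il faut manier avec soin l'ordre croissant induit sur les blocs par $I$ et par $U$ dans la d�finition de $\pi_{I}$ et $\pi_{U}$, de fa�on � garantir que la bijection respecte l'appariement des tailles de blocs et ne permute pas les r�les de $w_{1}$ et $w_{2}$.
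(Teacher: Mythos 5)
Your proposal is correct and coincides with the paper's approach: the paper's proof is simply the phrase \og calcul direct \fg, and your argument (dualizing the coproduct of $\ISPW$, counting the admissible partitions $I+U$ with $\pi_{I}(w)=w_{1}$ and $\pi_{U}(w)=w_{2}$, and putting them in bijection with $Bat(p,q)$ via $\tau(j)=i_{j}$, $\tau(p+j)=u_{j}$) is exactly the computation being alluded to. The identification $\delta_{i}=\gamma_{\tau^{-1}(i)}$ matches the stated formula, so nothing is missing.
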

\begin{proof}
	Ceci se montre par calcul direct.
\end{proof}
\Ex{}
\begin{align*}
Z_{x_{1}x_{1}}Z_{x_{1}x_{1}}=&2Z_{x_{1}x_{1}x_{2}x_{2}},\\
Z_{x_{1}x_{2}x_{2}}Z_{x_{1}x_{1}}=&2Z_{x_{1}x_{2}x_{2}x_{3}x_{3}}+Z_{x_{1}x_{1}x_{2}x_{3}x_{3}}.
\end{align*}

\paragraph{Isomorphisme entre $\dual{\ISPW}$ et $\QSym$.}\label{QSymdef} L'algèbre des fonctions quasi-symétriques, notée $\QSym$, a été introduite  par Gessel \cite{Gessel1984} suite au développement de la théorie des P-partitions par Stanley \cite{Stanley1972}. Malvenuto et Reutenauer \cite[chapitre 4]{Malvenuto1994} \cite{Malvenuto1995} étudient la structure d'algèbre de Hopf de $\QSym$. Ils explicitent l'antipode \cite[chapitre 4, corollaire 4.20]{Malvenuto1994} \cite[corollaire 2.3]{Malvenuto1995} également déterminée par Ehrenborg \cite[section 3, proposition 3.4]{Ehrenborg1996}. L'algèbre de Hopf des fonctions symétriques non commutatives $\NSym$ est introduite dans \cite[section 3.1]{Gelfand1995} et étudiée \cite{Gelfand1995,Krob1997,Duchamp1997,Krob1997a,Krob1999,Duchamp2002,Duchamp2011}. On peut la voir comme le dual gradué de $\QSym$. Malvenuto et Reutenauer s'intéressent également à $\dual{\QSym}$.  En effet, les auteurs définissent les fonctions quasi-symétriques libres \cite[chapitre 5, section 5.2]{Malvenuto1994} \cite[section 3]{Malvenuto1995}, les munissent de deux structures d'algèbres de Hopf isomorphes et duales, expriment l'algèbre $\Sigma$ des descentes de Solomon \cite{Solomon1976,Garsia1989} comme sous-algèbre de Hopf et montrent que $\Sigma$ et le dual gradué de $\QSym$ sont isomorphes \cite[chapitre 5, théorème 5.18]{Malvenuto1994} \cite[théorème 3.3]{Malvenuto1995}. 

Rappelons ici la construction de $\QSym$ détaillée dans \cite{Malvenuto1994,Malvenuto1995}.  On considère $Y=\{y_{1}<y_{2}<\dots\}$ un ensemble infini, dénombrable et totalement ordonné d'indéterminées commutant deux à deux. On pose $\K[[Y]]$ l'algèbre des séries formelles sur $Y$. On considère dans $\K[[Y]]$ le sous-espace vectoriel $$\QSym=Vect\bigg(M_{(\alpha_{1},\dots,\alpha_{k})}=\sum_{y_{i_{1}}<\dots<y_{i_{k}}}y_{i_{1}}^{\alpha_{1}}\dots y_{i_{k}}^{\alpha_{k}}, k\in\N^{*},(\alpha_{1},\dots,\alpha_{k})\in(\N^{*})^{k}\bigg).$$
L'espace $\QSym$ est une sous-algèbre de $\K[[Y]]$ pour le produit habituel des séries formelles. Dans le cas de $\QSym$ il s'écrit comme un produit de battage contractant.
\Ex{1}
Le coproduit de déconcaténation $\De$ fait de $\QSym$ une cogèbre colibre compatible avec la structure d'algèbre. Ainsi, $\QSym$ est une algèbre de Hopf.
\Ex{}
\begin{align*}
\De(M_{(1)})=&M_{(1)}\ot1+ 1\ot M_{(1)},\\
\De(M_{(2,1,3)})=&M_{(2,1,3)}\ot1 +M_{(2)}\ot M_{(1,3)} +M_{(2,1)}\ot M_{(3)} +1\ot M_{(2,1,3)}.
\end{align*} 

Réécrivons maintenant les opérations de $\dual{\ISPW}$ en terme de compositions. Pour tout entier $n\in\N^{*}$ et tout $n$-uplet $(k_{1},\dots,k_{n})\in(\N^{*})^{n}$, posons $Z_{(k_{1},\dots,k_{n})}=Z_{\underbrace{x_{1}\dots x_{1}}_{k_{1} \text{ fois}}\dots\underbrace{x_{n}\dots x_{n}}_{k_{n}}}$. Le produit et le coproduit de $\dual{\ISPW}$ se réécrivent de la façon suivante:
pour tous les entiers $n,m\in\N^{*}$, tout $n$-uplet $(k_{1},\dots,k_{n})\in(\N^{*})^{n}$ et tout $m$-uplet $(l_{1},\dots,l_{m})\in(\N^{*})^{m}$,
\begin{align*}
	Z_{(k_{1},\dots,k_{n})}Z_{(l_{1},\dots,l_{m})}=&Z_{(k_{1},\dots,k_{n})\shuffle (l_{1},\dots,l_{m})},\\
	\De(Z_{(k_{1},\dots,k_{n})})=&Z_{(k_{1},\dots,k_{n})}\ot 1 + 1\ot Z_{(k_{1},\dots,k_{n})} +\sum_{i=1}^{n-1}Z_{(k_{1},\dots,k_{i})}\ot Z_{(k_{i+1},\dots,k_{n})}.
\end{align*}

\begin{prop}\label{qsymispwdual}
	Les algèbres de Hopf $\QSym$ et $\dual{\ISPW}$ sont isomorphes.
\end{prop}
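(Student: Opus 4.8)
The plan is to exploit the reformulation in terms of compositions established just above. With that description the coalgebra structures of $\dual{\ISPW}$ and $\QSym$ literally coincide: in both the coproduct is the deconcatenation of compositions, so the two Hopf algebras differ only in their products. Consider the graded alphabet $A=\{(1),(2),(3),\dots\}$, where $(a)$ has degree $a$, equipped with the commutative associative contraction $(a)\diamond(b)=(a+b)$. Then $\dual{\ISPW}$ is exactly the shuffle Hopf algebra on $A$, whereas $\QSym$ in its monomial basis is the quasi-shuffle (contracting shuffle) Hopf algebra on $A$ attached to $\diamond$, both carrying deconcatenation as coproduct. The statement is therefore a special case of Hoffman's isomorphism between shuffle and quasi-shuffle Hopf algebras in characteristic zero, and the plan is to build that isomorphism by hand.

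For a composition $(a_{1},\dots,a_{n})$ and a composition $(i_{1},\dots,i_{l})\vDash n$, write $b_{j}=a_{i_{1}+\dots+i_{j-1}+1}+\dots+a_{i_{1}+\dots+i_{j}}$ for the sum of the entries of the $j$-th block. I would define $\Phi:\dual{\ISPW}\longrightarrow\QSym$ by
$$\Phi(Z_{(a_{1},\dots,a_{n})})=\sum_{(i_{1},\dots,i_{l})\vDash n}\frac{1}{i_{1}!\dots i_{l}!}\,M_{(b_{1},\dots,b_{l})},$$
and claim it is the desired Hopf algebra isomorphism. Three things then have to be checked: that $\Phi$ is a coalgebra morphism, that $\Phi$ is an algebra morphism, and that $\Phi$ is bijective. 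The coalgebra compatibility is immediate once one notices that a deconcatenation cut of $M_{(b_{1},\dots,b_{l})}$ must fall between two consecutive blocks: the pairs (global block decomposition, cut between blocks) are in weight-preserving bijection with the triples (arbitrary cut of $(a_{1},\dots,a_{n})$, block decomposition of the left part, block decomposition of the right part), which yields $(\Phi\ot\Phi)\circ\De=\De\circ\Phi$ at once. Bijectivity is elementary: $\Phi$ preserves the degree and is unitriangular for the refinement order on compositions (the term $M_{(a_{1},\dots,a_{n})}$ occurs with coefficient $1$), and an explicit inverse is given by Hoffman's logarithm $\Psi(M_{(a_{1},\dots,a_{n})})=\sum_{(i_{1},\dots,i_{l})\vDash n}\frac{(-1)^{n-l}}{i_{1}\dots i_{l}}\,Z_{(b_{1},\dots,b_{l})}$.

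The main obstacle, and the real content of the proof, is to show that $\Phi$ is an algebra morphism, i.e. the identity $\Phi(u\shuffle v)=\Phi(u)\,\Phi(v)$ where the right-hand product is the contracting shuffle of $\QSym$. I would establish it by induction on the total length $|u|+|v|$ using the recursive definitions of the shuffle and of the quasi-shuffle, or alternatively by an exponential generating function argument (the weights $1/i_{1}!\dots i_{l}!$ being exactly those of an exponential, which is what makes $\Phi$ convert a plain shuffle into a contracting one). Once these three points are in hand, $\Phi$ is a Hopf algebra isomorphism and $\dual{\ISPW}\cong\QSym$ follows. I note that a second, fully explicit isomorphism can instead be produced in the final section from the terminal-object characterization of $\QSym$ due to Aguiar, Bergeron and Sottile, which is the route the paper ultimately favours.
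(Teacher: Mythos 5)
Your proof is correct, but it takes a genuinely different route from the paper's. The paper's own argument for the proposition \ref{qsymispwdual} is a two-line abstract one: $\QSym$ and $\dual{\ISPW}$ are graded, connected, commutative, cofree and have the same s\'erie de \PH{}, hence are isomorphic by the standard rigidity theorem for such Hopf algebras; no isomorphism is exhibited at this stage. You instead observe that, in the composition notation introduced just before the statement, $\dual{\ISPW}$ is the shuffle Hopf algebra on the alphabet $\{(1),(2),\dots\}$ and $\QSym$ is the corresponding quasi-shuffle (battage contractant) Hopf algebra, both with deconcatenation coproduct, and you produce Hoffman's exponential
$$\Phi(Z_{(a_{1},\dots,a_{n})})=\sum_{(i_{1},\dots,i_{l})\vDash n}\frac{1}{i_{1}!\dots i_{l}!}\,M_{(b_{1},\dots,b_{l})}$$
as an explicit isomorphism. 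This is in fact exactly the map $\Psi$ that the paper only constructs later, in the proposition \ref{isoNSymISPW}'s companion \ref{isoISPWdQSym} of section \ref{morphQSym}, via the Aguiar--Bergeron--Sottile terminal-object property of $\QSym$ -- a point you correctly flag at the end. What each approach buys: the paper's argument is shorter but non-constructive and leans on two unproved assertions (cofreeness of $\dual{\ISPW}$ and the classification of commutative cofree graded connected Hopf algebras by their \PH{} series), whereas yours delivers the explicit isomorphism and its inverse (Hoffman's logarithm) immediately. The one place where your write-up is only a sketch is the multiplicativity $\Phi(u\shuffle v)=\Phi(u)\Phi(v)$, which you rightly identify as the real content; you defer it to an induction or to Hoffman's theorem on quasi-shuffle products in characteristic zero, which does cover precisely this situation (a quasi-shuffle algebra over the commutative graded semigroup $(a)\diamond(b)=(a+b)$), so the gap is one of detail rather than of substance -- and it is no larger than the gap in the paper's own one-line proof.
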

\begin{proof}
	On sait que les algèbres de Hopf $\QSym$ et $\dual{\ISPW}$ ont même série de \PH{} et qu'elles sont graduées, connexes, colibres et commutatives. Elles sont donc isomorphes.
\end{proof}

\Rq{1}  Un isomorphisme explicite entre $\dual{\ISPW}$ et $\QSym$ sera donné dans la section \ref{morphQSym} par la proposition \ref{isoISPWdQSym}.\\

Rappelons maintenant les opérations de l'algèbre de Hopf des fonctions symétriques non-commutatives $\NSym$ obtenues par dualité avec celles de $\QSym$. Notons $(M_{\alpha\models n}^{*})_{n\in\N^{*}}$ la base duale de la base $(M_{\alpha\models n})_{n\in\N^{*}}$ de $\QSym$.
Le produit de $\NSym$ est donné par la concaténation des compositions.
\Ex{1}
$$M_{(1,3,2,2,1)}^{*}M_{(4,1,4)}^{*}=M_{(1,3,2,2,1,4,1,4)}^{*}.$$
Pour déterminer le coproduit de $\NSym$, il suffit de connaître sa valeur pour les éléments de la famille  $(M_{n}^{*})_{n\in\N^{*}}$. Soit donc $n$ un entier naturel non nul. La valeur de $\De(M_{n}^{*}))$ est donnée par: 
$$\De(M_{(n)}^{*})=M_{(n)}^{*}\ot 1 + 1\ot M_{(n)}^{*} +\sum_{s=1}^{n-1}M_{(s)}^{*}\ot M_{(n-s)}^{*}.$$

\Ex{} Considérons pour exemples les éléments $M_{(3)}^{*}$ et $M_{(1,2)}^{*}$. Calculons leur coproduit réduit.
\begin{align*}
\tilde{\De}(M_{(3)}^{*})=&M_{(1)}^{*}\ot M_{(2)}^{*}+M_{(2)}^{*}\ot M_{(1)}^{*},\\
\tilde{\De}(M_{(1,2)}^{*})=&M_{(1)}^{*}\ot M_{(2)}^{*} + M_{(2)}^{*}\ot M_{(1)}^{*} + M_{(1,1)}^{*}\ot M_{(1)}^{*}+  M_{(1)}^{*}\ot M_{(1,1)}^{*}.
\end{align*}
\begin{prop}\label{nsymispw}
	Les algèbres de Hopf $\NSym$ et $\ISPW$ sont isomorphes.
\end{prop}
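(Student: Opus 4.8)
The plan is to obtain the isomorphism purely by graded duality, leveraging Proposition \ref{qsymispwdual}. Every Hopf algebra in play is graded, connected and of finite type: the homogeneous component of degree $n$ of $\ISPW$ has dimension $2^{n-1}$, and the same holds for $\QSym$, $\NSym$ and $\dual{\ISPW}$, so all \PH{} coefficients are finite. Under this finite-type hypothesis the graded dual is a contravariant involution on graded connected Hopf algebras: it exchanges product with coproduct, unit with counit, sends the antipode to its transpose, and the canonical pairing identifies the bidual $\dual{(\dual{\ISPW})}$ with $\ISPW$. Recall moreover that, by construction, $\NSym=\dual{\QSym}$.

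First I would take the Hopf algebra isomorphism $\phi\colon\QSym\to\dual{\ISPW}$ supplied by Proposition \ref{qsymispwdual}. Since $\phi$ is a morphism of graded connected Hopf algebras of finite type, its transpose $\phi^{t}\colon\dual{(\dual{\ISPW})}\to\dual{\QSym}$ is again a morphism of graded Hopf algebras: it intertwines the products of $\ISPW$ and $\NSym$, which are the transposes of the coproducts of $\dual{\ISPW}$ and $\QSym$, and simultaneously their coproducts, which are the transposes of the products; it also swaps unit and counit and commutes with the antipodes. As $\phi$ is bijective, so is $\phi^{t}$.

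It then remains to rewrite the source and target of $\phi^{t}$ through the canonical identifications $\dual{(\dual{\ISPW})}\cong\ISPW$ and $\dual{\QSym}=\NSym$. Composing $\phi^{t}$ with these yields a Hopf algebra isomorphism $\ISPW\to\NSym$, which is exactly the assertion.

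The main difficulty is not conceptual but bookkeeping: one must check that transposition respects all the structure maps at once (this is precisely where the finite-type hypothesis is essential, for otherwise the transpose of a coproduct need not land inside the algebraic tensor square), and that the bidual pairing recovers $\ISPW$ on the nose rather than merely up to an unspecified isomorphism. Alternatively, and avoiding any explicit transposition, one can argue exactly as for Proposition \ref{qsymispwdual}: both $\NSym$ and $\ISPW$ are graded, connected, free as algebras and cocommutative --- $\ISPW$ being free on $\{\underbrace{x_{1}\dots x_{1}}_{n \text{ fois}},\ n\in\N^{*}\}$ and cocommutative, while $\NSym$ is free on $(M_{(n)}^{*})_{n\in\N^{*}}$ and cocommutative as the graded dual of the commutative algebra $\QSym$ --- and they share the \PH{} series $1+\sum_{n\geq1}2^{n-1}h^{n}$; by the dual of the uniqueness statement invoked for Proposition \ref{qsymispwdual} they are therefore isomorphic.
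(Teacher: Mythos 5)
Your proof is correct and follows essentially the same route as the paper, whose entire argument for this proposition is ``il suffit de dualiser la proposition pr\'ec\'edente'': you simply spell out the transposition of the isomorphism of Proposition \ref{qsymispwdual}, together with the finite-type and bidual identifications that make the dualization legitimate. The extra care you take (and the alternative direct argument via the dual uniqueness statement) is welcome but not a departure from the paper's method.
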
 
\begin{proof}
	Il suffit de dualiser la proposition précédente.
\end{proof}

\Rq{1}  Un isomorphisme explicite entre $\NSym$ et $\ISPW$ sera donné dans la proposition \ref{isoNSymISPW} de la section  \ref{morphQSym}.

\subsection{Algèbre de Hopf des compositions étendues}

\subsubsection{Algèbre de Hopf $\Cp$}
\paragraph{Construction de $\Cp$.}
On appelle espace vectoriel des compositions étendues, l'espace $\Cp$ défini par:
$$\Cp= Vect\Bigg((\alpha_{0},\alpha_{1},\dots,\alpha_{k})\in\N\times(\N^{*})^{k} ,k\in\N\Bigg).$$ On définit l'application linéaire surjective suivante:
$$\Pi:\left\{\begin{array}{rcl}\WMat &\longrightarrow & \Cp \\ w=x_{i_{1}}\dots x_{i_{s}} &\longrightarrow & (|w|_{x_{0}},\dots,|w|_{x_{\sup(w)}}). \end{array}\right. $$
 Pour $w=x_{i_{1}}\dots x_{i_{s}}\in \WMat$ et $\sigma\in\Sn{s}$ on pose $w_{\sigma}= x_{i_{\sigma(1)}}\dots x_{i_{\sigma(s)}}$.
Le noyau de l'application $\Pi$ est l'espace vectoriel $\Idl{J}= \ker(\Pi)=\langle w-w_{\sigma}, w\in\WMat,\sigma\in\Sn{|w|}\rangle$. 

\begin{prop}
	$\Idl{J}$ est un biidéal de Hopf de $\WMat$.
\end{prop}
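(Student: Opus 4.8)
The plan is to check separately that $\Idl{J}$ is a two-sided ideal and a coideal of $\WMat$; since $\WMat$ is graded connected and every generator $w-w_\sigma$ is homogeneous of degree $|w|$, stability under the antipode $S$ will then be automatic, so $\Idl{J}$ will be a Hopf bi-ideal and $\Cp=\WMat/\Idl{J}$ will inherit a Hopf structure. Note first that if $w$ is a packed word then so is $w_\sigma$, since permuting letters leaves $\IAlph(w)$ unchanged; moreover $\varepsilon(w-w_\sigma)=0$ because $w$ and $w_\sigma$ have the same length.

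For the ideal property I would take a generator $w-w_\sigma$ and an arbitrary packed word $u$. As $w$ and $w_\sigma$ share the same alphabet, $\sup(w)=\sup(w_\sigma)$, so with $u'=T_{\sup(w)}(u)$ one has $w\ast u=wu'$ and $w_\sigma\ast u=w_\sigma u'$ (concatenations); hence $w_\sigma\ast u$ is $(w\ast u)$ with its first $|w|$ letters permuted by $\sigma$, giving $(w-w_\sigma)\ast u\in\Idl{J}$. For left multiplication, since the shift $T_{\sup(u)}$ acts letterwise it commutes with permutation of positions, so $u\ast w_\sigma=u\,(T_{\sup(u)}(w))_\sigma$ is $(u\ast w)$ with its last $|w|$ letters permuted, whence $u\ast(w-w_\sigma)\in\Idl{J}$. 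Thus $\Idl{J}$ is a two-sided ideal.

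The heart of the proof is the coideal property $\De(\Idl{J})\subseteq\Idl{J}\ot\WMat+\WMat\ot\Idl{J}$, which is equivalent (because $\Pi$ is surjective, so $\ker(\Pi\ot\Pi)=\Idl{J}\ot\WMat+\WMat\ot\Idl{J}$) to the identity $(\Pi\ot\Pi)\De(w)=(\Pi\ot\Pi)\De(w_\sigma)$ for all $w$ and all $\sigma\in\Sn{|w|}$. I would first isolate the key observation that, for any word $v$, the tuple $\Pi(\pack(v))$ depends only on the \emph{multiset} of letters of $v$: indeed it records the number of occurrences of $x_0$, followed by the occurrence counts of the distinct nonzero letters of $v$ listed by increasing index, and all of this is read off from the multiset alone. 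Then I would compare the two coproducts term by term through the bijection $I\mapsto\sigma(I)$ on subsets of $\{1,\dots,|w|\}$: the subword $w_\sigma[I]$ has the same multiset of letters as $w[\sigma(I)]$, and $\AAlph(w_\sigma[I])=\AAlph(w[\sigma(I)])$, so the contracted words $w_\sigma[J]/w_\sigma[I]$ and $w[\sigma(J)]/w[\sigma(I)]$ also carry the same multiset. By the observation, applying $\Pi\circ\pack$ to each tensor slot yields equal results, so the term of $(\Pi\ot\Pi)\De(w_\sigma)$ indexed by $I$ equals the term of $(\Pi\ot\Pi)\De(w)$ indexed by $\sigma(I)$; summing over $I$ gives the claimed equality.

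I expect the coideal step to be the main obstacle, and the delicate point within it is the contraction: the word $w[J]/w[I]$ depends not only on the multiset at positions $J$ but also on the \emph{support} $\AAlph(w[I])$, so one must verify that both pieces of data transform correctly under the position permutation $\sigma$. This is precisely where reducing everything to multisets, together with the bijection $I\mapsto\sigma(I)$ and the equality $\AAlph(w_\sigma[I])=\AAlph(w[\sigma(I)])$, carries the argument. Having established that $\Idl{J}$ is a graded bi-ideal of the graded connected bialgebra $\WMat$, one concludes it is stable under $S$, hence a Hopf bi-ideal.
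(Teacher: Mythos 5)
Your proof is correct and follows essentially the same route as the paper: the ideal property via block-permutation of the letters of the concatenated word, and the coideal property via the pairing of the term of $\De(w)$ indexed by $I$ with the term of $\De(w_{\sigma})$ indexed by $\sigma^{-1}(I)$ (your bijection $I\mapsto\sigma(I)$ is the same reindexing the paper performs with $U_{I}=\sigma^{-1}(I)$). Your explicit reduction to the observation that $\Pi\circ\pack$ depends only on the multiset of letters, together with $\AAlph(w_{\sigma}[I])=\AAlph(w[\sigma(I)])$ to handle the contraction, simply makes precise the final membership in $\Idl{J}\ot\WMat+\WMat\ot\Idl{J}$ that the paper asserts without further detail.
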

\begin{proof}
	Soit $v,w\in\WMat$ deux mots tassés, $\sigma\in\Sn{|w|}$. On appelle $\alpha$, $\beta$, $k$ et $s$ les entiers respectifs $\sup(v)$, $\sup(w)$, $|v|$ et $|w|$. On a :
	$$ v\ast (w-w_{\sigma})= v\ast w - v\ast w_{\sigma}=v T_{\alpha}(w)-v T_{\alpha}(w_{\sigma}) = u-u_{\tilde{\sigma}}$$
	où $$u=v T_{\alpha}(w) \text{ et } \tilde{\sigma}=\begin{pmatrix}1 & \dots & k & k+1 & \dots & k+s \\ 1 & \dots & k & k+\sigma(1) & \dots & k+\sigma(s) \end{pmatrix}$$ 
	et $$ (w-w_{\sigma})\ast v= u'-u'_{\tilde{\sigma'}}$$ 
	où 
	$$u'=w T_{\beta}(v) \text{ et } \tilde{\sigma'}=\begin{pmatrix}1 & \dots & s & s+1 & \dots & s+k \\ \sigma(1) & \dots & \sigma(s) & s+1 & \dots & s+k \end{pmatrix}.$$
	On obtient donc que $\Idl{J}$ est un idéal de $\WMat$. 

	Considérons maintenant $\De(w-w_{\sigma})$.
	\begin{align*}
	\De&(w-w_{\sigma})=\De(w)-\De(w_{\sigma})\\
	=&\sum_{I+J=\{1,\dots,|w|\}}\bigg(\pack(w[I])\otimes \pack(w[J]/w[I])-\pack(w_{\sigma}[I])\otimes \pack(w_{\sigma}[J]/w[I])\bigg)\\
	=&\sum_{I+J=\{1,\dots,|w|\}}\bigg(\pack(w[I])\otimes \pack(w[J]/w[I])-\pack(w_{\sigma}[U_{I}])\otimes \pack(w_{\sigma}[V_{J}]/w[U_{I}])\bigg)\\
	\end{align*}
	où $U_{I}=\sigma^{-1}(I)$ et $V_{J}=\sigma^{-1}(J)$.
	On a donc $\De(w-w_{\sigma})\subset \Idl{J}\ot\WMat + \WMat\ot\Idl{J}$. Ainsi, $\Idl{J}$ est un coïdéal.
	
	Au final, $\Idl{J}$ est un biidéal de Hopf de $\WMat$.
\end{proof}

 $\WMat/\Idl{J}$ est donc une bigèbre graduée, c'est donc une algèbre de Hopf quotient de $\WMat$. En identifiant maintenant $\WMat/\Idl{J}$ et $\Cp$, on peut munir $\Cp$ d'une structure d'algèbre de Hopf. Reste alors à décrire ses opérations.
 
 \begin{prop}
 	Le produit de l'algèbre $\Cp$ est donné par l'application:
 	$$\ast:\left\{\begin{array}{rcl} \Cp\ot\Cp &\longrightarrow & \Cp \\ (\alpha_{0},\dots,\alpha_{s})\ot(\beta_{0},\dots,\beta_{k}) &\longrightarrow &(\alpha_{0}+\beta_{0},\alpha_{1},\dots,\alpha_{s},\beta_{1},\dots,\beta_{k}).\end{array}\right.$$	
 \end{prop}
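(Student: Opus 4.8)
Le plan est d'exploiter l'identification $\Cp=\WMat/\Idl{J}$, licite puisque la proposition pr�c�dente �tablit que $\Idl{J}$ est un biid�al de Hopf. Le produit de $\Cp$ est alors celui induit par le produit $\ast$ de $\WMat$ via la surjection $\Pi$ : autrement dit, pour n'importe quels repr�sentants $w_{\alpha}$ et $w_{\beta}$ des deux compositions �tendues, on a $(\alpha_{0},\dots,\alpha_{s})\ast(\beta_{0},\dots,\beta_{k})=\Pi(w_{\alpha}\ast w_{\beta})$. La bonne d�finition de ce produit sur le quotient est d�j� garantie par le caract�re de biid�al de $\Idl{J}$, si bien qu'il suffira de mener le calcul sur un choix commode de repr�sentants.

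Je commencerais donc par prendre les repr�sentants canoniques croissants
$$w_{\alpha}=\underbrace{x_{0}\dots x_{0}}_{\alpha_{0} \text{ fois}}\underbrace{x_{1}\dots x_{1}}_{\alpha_{1} \text{ fois}}\dots\underbrace{x_{s}\dots x_{s}}_{\alpha_{s} \text{ fois}}, \qquad w_{\beta}=\underbrace{x_{0}\dots x_{0}}_{\beta_{0} \text{ fois}}\underbrace{x_{1}\dots x_{1}}_{\beta_{1} \text{ fois}}\dots\underbrace{x_{k}\dots x_{k}}_{\beta_{k} \text{ fois}},$$
de sorte que $\Pi(w_{\alpha})=(\alpha_{0},\dots,\alpha_{s})$, $\Pi(w_{\beta})=(\beta_{0},\dots,\beta_{k})$ et $\sup(w_{\alpha})=s$ (les $\alpha_{j}$ d'indice $\geq1$ �tant non nuls, la plus grande lettre de $w_{\alpha}$ est bien $x_{s}$).

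Le c�ur du calcul consiste ensuite � expliciter $w_{\alpha}\ast w_{\beta}=w_{\alpha}\,T_{s}(w_{\beta})$. Le point d�cisif, et le seul demandant un soin particulier, est le comportement dissym�trique de l'op�rateur $T_{s}$ vis-�-vis de la lettre $x_{0}$ : par d�finition $T_{s}$ fixe $x_{0}$ mais envoie $x_{j}$ sur $x_{j+s}$ pour tout $j\geq1$. Ainsi, dans $T_{s}(w_{\beta})$, les $\beta_{0}$ occurrences de $x_{0}$ demeurent des $x_{0}$, tandis que les lettres $x_{1},\dots,x_{k}$ deviennent respectivement $x_{s+1},\dots,x_{s+k}$. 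Par cons�quent, dans le mot tass� $w_{\alpha}\ast w_{\beta}$, la lettre $x_{0}$ appara�t $\alpha_{0}+\beta_{0}$ fois, chaque lettre $x_{j}$ avec $1\leq j\leq s$ appara�t $\alpha_{j}$ fois, chaque lettre $x_{s+j}$ avec $1\leq j\leq k$ appara�t $\beta_{j}$ fois, et aucune autre lettre n'intervient.

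Il ne reste alors qu'� appliquer $\Pi$ pour lire le $(s+k+1)$-uplet des fr�quences, � savoir $(\alpha_{0}+\beta_{0},\alpha_{1},\dots,\alpha_{s},\beta_{1},\dots,\beta_{k})$, ce qui est exactement la formule annonc�e. On voit ainsi que toute la subtilit� se r�sume au contraste entre l'\emph{addition} des fr�quences de $x_{0}$ (cons�quence du fait que $T_{s}$ fixe $x_{0}$) et la simple \emph{concat�nation} des fr�quences des lettres d'indice non nul (cons�quence du d�calage de ces indices). Les cas limites, par exemple $s=0$ o� $w_{\alpha}=x_{0}\dots x_{0}$ et $T_{0}=\mathrm{Id}$, se v�rifient sans difficult� et redonnent la m�me formule.
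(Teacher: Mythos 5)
Votre preuve est correcte et suit exactement la d�marche que l'article sous-entend (celui-ci �nonce la proposition sans d�monstration, apr�s avoir identifi� $\Cp$ � $\WMat/\Idl{J}$) : bonne d�finition sur le quotient via le biid�al $\Idl{J}$, calcul sur les repr�sentants croissants, et lecture des fr�quences en exploitant le fait que $T_{s}$ fixe $x_{0}$ et d�cale les autres lettres. Rien � redire.
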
 
 \Ex{}
 \begin{align*}
	 (0,1,4,2)\ast(3,2,2)=&(3,1,4,2,2,2),\\
	 (3,2,2)\ast(0,1,4,2)=&(3,2,2,1,4,2),\\
	 (2,3,4,1,2)\ast(12,3,14,4)=&(14,3,4,1,2,3,14,4).
 \end{align*}
 \Rq{1} L'algèbre $\Cp$ ne peut être libre du fait de l'existence de la lettre $x_{0}$ dans l'alphabet de $\WMat$. Les éléments $(\alpha_{0})$ pour $\alpha_{0}\in\N$ sont centraux. Si on considère la sous-algèbre $\mathcal{C}$ engendrée par les éléments de la forme $(0,\alpha_{1},\dots,\alpha_{k})\in(\N^{*})^{k}$ avec $k\in\N$, celle-ci est alors libre: il s'agit de l'algèbre tensorielle $T\langle (0,n),n\in\N^{*} \rangle$.
 
 \begin{prop}
 	Le coproduit de la cogèbre $\Cp$ est donné par l'application:
 	$$\De:\left\{\begin{array}{rcl} \Cp &\longrightarrow & \Cp\ot\Cp \\ (\alpha_{0},\dots,\alpha_{p}) &\longrightarrow &\sum\limits_{a=0}^{\alpha_{0}}\sum\limits_{n=0}^{p}\sum\limits_{1\leq i_{i}<\dots<i_{n}\leq p}\sum\limits_{s=n}^{\alpha_{i_{1}}+\dots+\alpha_{i_{n}}}\sum\limits_{\substack{k_{i_{1}}+\dots+k_{i_{n}}=s\\1\leq k_{i_{j}}\leq \alpha_{i_{j}}}}c_{\alpha,a,n,I,s,K_{I}}t_{\alpha,a,n,I,s,K_{I}}\end{array}\right.$$	
 	où 
 	\begin{align*}
 	c_{\alpha,a,n,I,s,K_{I}}=&\binom{\alpha_{0}}{a}\binom{\alpha_{i_{1}}}{k_{i_{1}}}\dots\binom{\alpha_{i_{n}}}{k_{i_{n}}},\\
 	t_{\alpha,a,n,I,s,K_{I}}=&(a,k_{i_{1}},\dots,k_{i_{n}})\ot(\alpha_{0}-a+\sum_{j=1}^{n}(\alpha_{i_{j}}-k_{i_{j}}),\alpha_{u_{1}},\dots, \alpha_{u_{p-n}}),\\
 	\{u_{1}<\dots<u_{p-n}\}=&\llbracket1,p\rrbracket\setminus\{i_{1},\dots,i_{n}\}.
 	\end{align*}
 \end{prop}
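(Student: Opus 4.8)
The plan is to compute the coproduct through the quotient map $\Pi$. By the previous proposition $\Idl{J}=\ker(\Pi)$ is a Hopf bi-ideal, hence in particular a coideal, so $\Pi$ is a morphism of coalgebras and $\De_{\Cp}\circ\Pi=(\Pi\ot\Pi)\circ\De_{\WMat}$. It is therefore enough to choose one packed word representing the class $(\alpha_{0},\dots,\alpha_{p})$, compute its $\WMat$-coproduct, and push it through $\Pi\ot\Pi$. I would take the grouped increasing representative $w_{\alpha}=\underbrace{x_{0}\dots x_{0}}_{\alpha_{0}}\underbrace{x_{1}\dots x_{1}}_{\alpha_{1}}\dots\underbrace{x_{p}\dots x_{p}}_{\alpha_{p}}$, for which $\Pi(w_{\alpha})=(\alpha_{0},\dots,\alpha_{p})$, and whose $|w_{\alpha}|$ positions split into consecutive blocks $P_{0},\dots,P_{p}$ with $|P_{i}|=\alpha_{i}$, the block $P_{i}$ carrying the letter $x_{i}$.

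First I would examine a single term of $\De_{\WMat}(w_{\alpha})=\sum_{I+J=\{1,\dots,|w_{\alpha}|\}}\pack(w_{\alpha}[I])\ot\pack(w_{\alpha}[J]/w_{\alpha}[I])$. Since the letters inside each block coincide and the blocks appear in increasing order, the only information about $I$ that survives packing and then $\Pi$ is the multiplicity vector $(a,k_{1},\dots,k_{p})$ with $a=|I\cap P_{0}|$ and $k_{i}=|I\cap P_{i}|$. Writing $i_{1}<\dots<i_{n}$ for the indices with $k_{i}\geq1$, the subword $w_{\alpha}[I]$ is the increasing word formed by $a$ letters $x_{0}$ and $k_{i_{j}}$ letters $x_{i_{j}}$, so its packing has class $(a,k_{i_{1}},\dots,k_{i_{n}})$ in $\Cp$; this is exactly the left tensor factor of $t_{\alpha,a,n,I,s,K_{I}}$.

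Next I would track the contraction on the right factor, which is the only delicate point. In $w_{\alpha}[J]/w_{\alpha}[I]$ a letter of $J$ is turned into $x_{0}$ exactly when it lies in $\AAlph(w_{\alpha}[I])$, that is, when its block index belongs to $\{i_{1},\dots,i_{n}\}$. Consequently the letters of the blocks $P_{i_{j}}$ remaining in $J$, totalling $\sum_{j=1}^{n}(\alpha_{i_{j}}-k_{i_{j}})$, all collapse onto $x_{0}$ and join the $\alpha_{0}-a$ original zero letters of $J$, while the blocks indexed by $\{u_{1}<\dots<u_{p-n}\}=\llbracket1,p\rrbracket\setminus\{i_{1},\dots,i_{n}\}$ remain untouched with full multiplicities $\alpha_{u_{1}},\dots,\alpha_{u_{p-n}}$. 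After packing, the class of the right factor is thus $(\alpha_{0}-a+\sum_{j=1}^{n}(\alpha_{i_{j}}-k_{i_{j}}),\alpha_{u_{1}},\dots,\alpha_{u_{p-n}})$, matching the right tensor factor of $t_{\alpha,a,n,I,s,K_{I}}$. The main obstacle is precisely this bookkeeping for the contraction: one has to verify that a block survives in the right factor exactly when it is untouched by $I$ (i.e. $k_{i}=0$) and collapses entirely to $x_{0}$ otherwise, so that no spurious letter is created or lost.

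Finally I would count and reindex. The number of subsets $I$ producing a fixed multiplicity vector $(a,k_{1},\dots,k_{p})$ is $\binom{\alpha_{0}}{a}\prod_{i=1}^{p}\binom{\alpha_{i}}{k_{i}}$, which equals $\binom{\alpha_{0}}{a}\prod_{j=1}^{n}\binom{\alpha_{i_{j}}}{k_{i_{j}}}$ because $\binom{\alpha_{i}}{0}=1$ for $i\notin\{i_{1},\dots,i_{n}\}$; this is $c_{\alpha,a,n,I,s,K_{I}}$. Grouping the sum over multiplicity vectors according to $a\in\llbracket0,\alpha_{0}\rrbracket$, the support size $n$, the support $\{i_{1}<\dots<i_{n}\}$, the total $s=k_{i_{1}}+\dots+k_{i_{n}}$ (which runs from $n$ to $\alpha_{i_{1}}+\dots+\alpha_{i_{n}}$ since each $k_{i_{j}}\geq1$), and the tuples $(k_{i_{1}},\dots,k_{i_{n}})$ with $k_{i_{1}}+\dots+k_{i_{n}}=s$ and $1\leq k_{i_{j}}\leq\alpha_{i_{j}}$, gives a bijective reparametrization of all admissible vectors and reproduces exactly the announced quintuple sum, which completes the proof.
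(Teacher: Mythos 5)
Your argument is correct: since $\Idl{J}=\ker(\Pi)$ is a coideal, computing $\De_{\WMat}$ on the non-decreasing representative $w_{\alpha}$ and pushing through $\Pi\ot\Pi$ is exactly the right procedure, and your bookkeeping of the contraction (blocks meeting $I$ collapse entirely onto $x_{0}$, untouched blocks keep their multiplicities) together with the count $\binom{\alpha_{0}}{a}\prod_{j}\binom{\alpha_{i_{j}}}{k_{i_{j}}}$ of subsets $I$ per multiplicity vector reproduces the stated quintuple sum; one checks it against the example $\tilde{\De}((1,1,1))$. The paper states this proposition without any proof, so there is nothing to compare against, but your derivation is the natural (and surely intended) one given that $\Cp$ is defined as the quotient $\WMat/\Idl{J}$.
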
 
 \Ex{} Donnons quelques exemples pour le calcul du coproduit de la cogèbre $\Cp$. On ne donne que les coproduits réduits.
 \begin{align*}
	 \tilde{\De}((1,1,1))=&(1)\ot(0,1,1)+2(0,1)\ot(1,1)+(0,1,1)\ot(1)+2(1,1)\ot(0,1),\\
	 \tilde{\De}((0,2,1))=&(0,1)\ot(0,2) +2(0,1)\ot(1,1)+(0,2)\ot(0,1)+2(0,1,1)\ot(1).
 \end{align*}
 \Rq{} 
 \begin{enumerate}
 	\item La cogèbre $\Cp$ n'est pas colibre. En effet, 
 	\begin{align*}
 	\Prim{\Cp}{1}=& Vect((1),(0,1)),\\
 	\Prim{\Cp}{2}=& (0),\\
 	\Prim{\Cp}{3}=& Vect((0,1,2)-(0,2,1))
 	\end{align*}
 	Or, la série formelle de $\Cp$ étant égale à $F_{\Cp}(h)=\sum\limits_{n=0}^{\infty}2^{n}h^{n}=\frac{1}{1-2h}$, on a $1-\frac{1}{F_{\Cp}(h)}=2h$.
 	Donc, si $\Cp$ était colibre, on aurait $\Prim{\Cp}{3}=(0)$; ce qui n'est pas le cas.
 	\item Les seules compositions étendues primitives sont celles de degré 1, \emph{i.e.} $(1)$ et $(0,1)$.
 \end{enumerate}

\paragraph{Structure de coproduit semi-direct.}
L'objectif de ce paragraphe est de décrire la structure d'algèbre de Hopf de $\Cp$ en fonction de celle de l'algèbre de polynômes $\K[(1)]$ et de celle de l'algèbre tensorielle $\tens{(0,n),n\in\N^{*}}$. Pour cela, nous allons établir un isomorphisme entre $\Cp$  et le coproduit semi-direct d'algèbres de Hopf $\K[(1)]\rtimes\tens{(0,n),n\in\N^{*}}$.

\subparagraph{Rappels.} Commençons par quelques rappels sur le coproduit semi-direct d'algèbres de Hopf (\emph{cf.} \cite{Molnar1977}).
Considérons donc $(H,m_{H},\eta_{H},\De_{H},\varepsilon_{H})$ et $(C,m_{C},\eta_{C},\De_{C},\varepsilon_{C})$ deux algèbres de Hopf. On suppose que $C$ possède une structure de $H$-comodule à droite donnée par le morphisme: $$\rho:\left\{\begin{array}{rcl} C &\longrightarrow & C\ot H \\ c &\longrightarrow & \sum c_{(1)}\ot c_{(2)}.\end{array}\right.$$
On définit la volte $\nu$ par:
$$\nu:\left\{\begin{array}{rcl} H\ot C &\longrightarrow & C\ot H \\ h\ot c &\longrightarrow & c\ot h.\end{array}\right.$$
On appelle $Id_{H}$ (respectivement $Id_{C}$) la fonction identité de $H$ (respectivement $C$).
\begin{defi}
	\begin{enumerate}
		\item $C$ est une comodule-algèbre si $m_{C}$ et $\eta_{C}$ sont des morphismes de $H$-comodules \emph{i.e.} $$\rho\circ\eta_{C}=(\eta_{C}\ot\eta_{H})\circ \rho_{|_{\K}} \text{ et } \rho\circ m_{C}=(m_{C}\ot m_{H})\circ(Id_{C}\ot\nu\ot Id_{H})\circ(\rho\ot\rho).$$
		\item $C$ est une comodule-cogèbre si $\De_{C}$ et $\varepsilon_{C}$ sont des morphismes de $H$-comodules \emph{i.e.} $$(\varepsilon_{C}\ot Id_{H})\circ\rho=\eta_{H}\circ\varepsilon_{C} \text{ et } (\De_{C}\ot Id_{H})\circ\rho=(Id_{C}\ot Id_{C}\ot m_{H})\circ(Id_{C}\ot\nu\ot Id_{H})\circ(\rho\ot\rho)\circ\De_{C}.$$
		\item $C$ est une comodule-bigèbre si elle est à la fois une comodule-algèbre et une comodule-cogèbre.
	\end{enumerate}
\end{defi}
Sur $H\ot C$ on définit les morphismes suivants:
\begin{align*}
\overline{m}=&(m_{H}\ot m_{C})\circ(Id_{H}\ot\nu^{-1}\ot Id_{C}),\\
\overline{\eta}=&\eta_{H}\ot\eta_{C},\\
\overline{\varepsilon}=&\varepsilon_{H}\ot \varepsilon_{C},\\
\overline{\De}=&(Id_{H}\ot Id_{C} \ot m_{H} \ot Id_{C})\circ (Id_{H}\ot \nu \ot Id_{H} \ot Id_{C})\circ (\De_{H}\ot\rho \ot Id_{C} )\circ(Id_{H}\ot \De_{C}).
\end{align*}
\begin{theo}\label{coprodsemidirect}Supposons que $H$ soit une algèbre de Hopf commutative et que l'algèbre de Hopf $C$ soit une comodule-bigèbre. On obtient alors que $(H\ot C,\overline{m},\overline{\eta},\overline{\De},\overline{\varepsilon})$ est une algèbre de Hopf notée $H\rtimes C$ et appelée coproduit semi-direct de $C$ par $H$.
\end{theo}
\begin{proof}
	Il s'agit du théorème 2.14 énoncé et démontré par Molnar dans \cite{Molnar1977}.
\end{proof}

\subparagraph{Interpretation de $\Cp$ en terme d'un coproduit semi-direct d'algèbres de Hopf.}
Interprétons $\Cp$ comme le coproduit semi-direct d'une algèbre de Hopf $C$ par une algèbre de Hopf $H$. Pour cela posons $H=\K[(1)]$ et $C=\tens{(0,n),n\in\N^{*}}$. Commençons par expliciter les opérations de ces deux algèbres de Hopf.
\begin{description}
	\item[Cas de $H$.] L'algèbre de Hopf $H$ est isomorphe à l'algèbre de polynômes $\K[X]$. Son produit $\ast_{H}$ est donc donné par le produit habituel des polynômes \emph{i.e.}:
	$$\ast_{H}:\left\{\begin{array}{rcl} H\ot H &\longrightarrow & H \\ (k)\ot(l) &\longrightarrow& (k)\ast_{H}(l)=(1)^{k}\ast_{H}(1)^{l}=(k+l).\end{array}\right.$$
	Le coproduit $\De_{H}$ rend $(1)$ primitif. Il est ainsi complètement déterminé.
	\Ex{}
	\begin{align*}
		[3(2)-4(1)+6]\ast_{H}[(1)-1]=&[3(1)^{2}-4(1)+6]\ast_{H}[(1)-1]=3(3)-7(2)+10(1)-6,\\
		\De_{H}((3)-2(1)+4)=&[(3)-2(1)]\ot1_{H}+ 1_{H}\ot[(3)-2(1)]\\
		+& 4~1_{H}\ot 1_{H}+ 3(1)\ot(2)+3(2)\ot(1).
	\end{align*}
	\item[Cas de $C$.] L'algèbre de Hopf $C$ est isomorphe à celle des fonctions symétriques non commutatives $\NSym$. Son produit $\ast_{C}$ est donc donné par concaténation \emph{i.e.}:
	$$\ast_{C}:\left\{\begin{array}{rcl} C\ot C &\longrightarrow & C \\ (0,n_{1},\dots,n_{k})\ot(0,m_{1},\dots,m_{s}) &\longrightarrow& (0,n_{1},\dots,n_{k},m_{1},\dots,m_{s}).\end{array}\right.$$
	Son coproduit $\De_{C}$ est complètement déterminé en rendant primitifs ses générateurs \emph{i.e.}:
	$$\De_{C}:\left\{\begin{array}{rcl} C &\longrightarrow & C\ot C \\ (0,n), n\in\N^{*} &\longrightarrow& (0,n)\ot1_{C}+1_{C}\otimes(0,n).\end{array}\right.$$
	\Ex{}
	\begin{align*}
		(0,2,3,1)\ast_{C}(0,9,4,6)=&(0,2,3,1,9,4,6),\\
		\De_{C}((0,1,2))=&(0,1,2)\ot1_{C}+(0,1)\ot(0,2)+ (0,2)\ot(0,1)+1_{C}\otimes(0,1,2).
	\end{align*}
\end{description}
\begin{prop}\label{Hcomodule}
	On définit l'application suivante:
	$$\rho:\left\{\begin{array}{rcl} C &\longrightarrow & C\ot H \\(0,n_{1},\dots,n_{q}) &\longrightarrow& \displaystyle\sum\limits_{s=q}^{n_{1}+\dots+n_{q}-1}\sum\limits_{\substack{ k_{1}+\dots+k_{q}=s \\ 1\leq k_{j}\leq n_{j}}}\binom{n_{1}}{k_{1}}\dots\binom{n_{q}}{k_{q}}(0,k_{1},\dots,k_{q})\ot(\sum\limits_{j=1}^{q}n_{j}-k_{j})\\
	&&+(0,n_{1},\dots,n_{q})\ot1_{H}.\end{array}\right.$$
	$(C,\rho)$ est un $H$-comodule à droite.
\end{prop}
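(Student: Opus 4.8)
The plan is to exploit that $C=\tens{(0,n),n\in\N^{*}}$ is a free (tensor) algebra, so that any algebra morphism out of $C$ is determined by its values on the generators $(0,n)$. The first and decisive step is to recognise that $\rho$ is an \emph{algebra} morphism from $C$ to $C\ot H$, the latter equipped with its tensor-product algebra structure. Writing $(0,n_{1},\dots,n_{q})=(0,n_{1})\ast_{C}\dots\ast_{C}(0,n_{q})$ and expanding the product $\rho((0,n_{1}))\ast_{C\ot H}\dots\ast_{C\ot H}\rho((0,n_{q}))$, each factor being $\rho((0,n_{j}))=\sum_{k_{j}=1}^{n_{j}}\binom{n_{j}}{k_{j}}(0,k_{j})\ot(n_{j}-k_{j})$, one recovers exactly the defining formula of $\rho((0,n_{1},\dots,n_{q}))$: concatenation of the $C$-parts gives $(0,k_{1},\dots,k_{q})$, multiplication of the $H$-parts gives $(\sum_{j}n_{j}-k_{j})$, and the coefficients multiply to $\prod_{j}\binom{n_{j}}{k_{j}}$; the summand $(0,n_{1},\dots,n_{q})\ot 1_{H}$ singled out in the statement is precisely the top term $s=n_{1}+\dots+n_{q}$. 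Hence $\rho$ is the unique algebra morphism with $\rho((0,n))=\sum_{k=1}^{n}\binom{n}{k}(0,k)\ot(n-k)$, using $(0)=1_{H}$ in $H=\K[(1)]$.

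Once this is established, verifying the two comodule axioms reduces to checking them on the generators $(0,n)$, because $H$ is a bialgebra: $\De_{H}$ and $\varepsilon_{H}$ are algebra morphisms, so $(\rho\ot Id_{H})\circ\rho$, $(Id_{C}\ot\De_{H})\circ\rho$ and $(Id_{C}\ot\varepsilon_{H})\circ\rho$ are all algebra morphisms (being composites and tensor products of such). Two algebra morphisms out of the free algebra $C$ that agree on the generators coincide everywhere.

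For the counit axiom I would compute $(Id_{C}\ot\varepsilon_{H})(\rho((0,n)))=\sum_{k=1}^{n}\binom{n}{k}(0,k)\,\varepsilon_{H}((n-k))$; since $(1)$ is primitive, $\varepsilon_{H}((m))=0$ for $m\geq 1$ and $\varepsilon_{H}(1_{H})=1$, so only $k=n$ survives and yields $(0,n)$, as required. For coassociativity I would expand both sides on $(0,n)$:
$$(\rho\ot Id_{H})\circ\rho((0,n))=\sum_{k=1}^{n}\sum_{l=1}^{k}\binom{n}{k}\binom{k}{l}\,(0,l)\ot(k-l)\ot(n-k),$$
$$(Id_{C}\ot\De_{H})\circ\rho((0,n))=\sum_{k=1}^{n}\sum_{j=0}^{n-k}\binom{n}{k}\binom{n-k}{j}\,(0,k)\ot(j)\ot(n-k-j),$$
using $\De_{H}((m))=\sum_{j=0}^{m}\binom{m}{j}(j)\ot(m-j)$. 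Matching a generic term $(0,a)\ot(b)\ot(c)$ with $a+b+c=n$ and $a\geq 1$, the first sum contributes $\binom{n}{a+b}\binom{a+b}{a}$ and the second $\binom{n}{a}\binom{n-a}{b}$; both equal the multinomial coefficient $\tfrac{n!}{a!\,b!\,c!}$, and the index ranges coincide, so the two sides agree.

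The main obstacle is really the first step: seeing that the somewhat forbidding explicit formula for $\rho$ is nothing but the multiplicative extension of the simple binomial rule on generators. Everything afterwards is the standard reduction-to-generators argument combined with the elementary identity $\binom{n}{a+b}\binom{a+b}{a}=\binom{n}{a}\binom{n-a}{b}=\tfrac{n!}{a!\,b!\,c!}$.
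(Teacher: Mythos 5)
Your proposal is correct and follows essentially the same route as the paper: the paper likewise first shows that $\rho((0,n_{1},\dots,n_{q}))$ equals the product $\rho((0,n_{1}))\dots\rho((0,n_{q}))$ in $C\ot H$, deduces that $\rho$ is an algebra morphism, reduces the two comodule axioms to the generators $(0,n)$, and verifies them there by the same binomial computation. Your explicit matching of a generic term $(0,a)\ot(b)\ot(c)$ via $\binom{n}{a+b}\binom{a+b}{a}=\binom{n}{a}\binom{n-a}{b}$ is just a tidier presentation of the paper's term-by-term comparison.
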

\Ex{}
\begin{align*}
\rho((0,n))=&\sum_{k=1}^{n-1}\binom{n}{k}(0,k)\ot(n-k)+(0,n)\ot1_{H} \text{ pour } n\in\N^{*},\\
\rho((0,2,2))=&4(0,1,1)\ot(2)+2(0,1,2)\ot(1)+2(0,2,1)\ot(1)+ (0,2,2)\ot1_{H}.
\end{align*}
\begin{proof}
	Soit $(0,n_{1},\dots,n_{q})\in C\setminus\{1_{C}\}$. Calculons $\rho((0,n_{1},\dots,n_{q}))$ à partir de $\rho((0,n_{j}))$, pour $j\in\{1,\dots,q\}$. Posons d'abord $\theta=n_{1}+\dots+n_{q}$. On a:
	\begin{align*}
	\rho((0,n_{1}))\dots\rho((0,n_{q}))=&\prod_{j=1}^{\underrightarrow{q}}\left[\sum_{k_{j}=1}^{n_{j}-1}\binom{n_{j}}{k_{j}}(0,k_{j})\ot(n_{j}-k_{j})+(0,n_{j})\ot1_{H}\right]\\
	=&\sum_{s=q}^{\theta-1}\sum_{\substack{k_{1}+\dots+k_{q}=s\\ 1\leq k_{j}\leq n_{j}}}\binom{n_{1}}{k_{1}}\dots\binom{n_{q}}{k_{q}}(0,k_{1},\dots,k_{q})\ot(\theta-s)\\
	&+(0,n_{1},\dots,n_{q})\ot1_{H}\\
	=&\rho((0,n_{1},\dots,n_{q})).
	\end{align*}
	Ainsi $\rho\circ m_{C}=(m_{C}\ot m_{H})\circ(Id_{C}\ot\nu\ot Id_{H})\circ(\rho\ot\rho)$ \emph{i.e.} $\rho$ est un morphisme d'algèbres de $C$ dans $C\ot H$. Donc, pour montrer que $(Id_{C}\ot \De_{H})\circ\rho=(\rho\ot Id_{H})\circ\rho$ et que $(Id_{C}\ot \varepsilon_{H})\circ\rho=Id_{C}$ il suffit de le vérifier pour les éléments $(0,n)$ tels que $n\in\N^{*}$.
	Soit $(0,n)\in C\setminus\{1_{C}\}$. On a alors:
	\begin{align*}
	(Id_{C}\ot \De_{H})\circ\rho((0,n))=&\sum_{k=1}^{n-2}\sum_{l=1}^{n-k-1}\binom{n}{k}\binom{n-k}{l}(0,k)\ot(l)\ot(n-k-l)\\
	+&\sum_{k=1}^{n-1}\binom{n}{k}(0,k)\ot1_{H}\ot(n-k)+\sum_{k=1}^{n-1}\binom{n}{k}(0,k)\ot(n-k)\ot1_{H}\\
	+&(0,n)\ot1_{H}\ot1_{H}
	\end{align*}
	et  
	\begin{align*}
	(\rho\ot Id_{H})\circ\rho((0,n))=&\sum_{k=2}^{n-1}\sum_{l=1}^{k-1}\binom{k}{l}\binom{n}{k}(0,l)\ot(k-l)\ot(n-k)\\
	+&\sum_{k=1}^{n-1}\binom{n}{k}(0,k)\ot1_{H}\ot(n-k)+\sum_{s=1}^{n-1}\binom{n}{s}(0,s)\ot(n-s)\ot1_{H}\\
	+&(0,n)\ot1_{H}\ot1_{H}.
	\end{align*}
	On obtient donc $(\rho\ot Id_{H})\circ\rho=(Id_{C}\otimes\De_{H})\circ\rho$.
	
	De plus,
	$$(Id_{C}\ot \varepsilon_{H})\circ\rho((0,n))=\sum_{k=1}^{n-1}\binom{n}{k}(0,k)\varepsilon_{H}((n-k))+(0,n)\varepsilon_{H}(1_{H})
	=(0,n);$$
	donc $(Id_{C}\ot \varepsilon_{H})\circ\rho=Id_{C}$.
\end{proof}

\begin{prop}\label{bHcomodule}
	$C$ est une comodule-bigèbre.
\end{prop}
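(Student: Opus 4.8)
The plan is to verify the two halves of the definition of a comodule-bigèbre separately, leaning on the fact — already obtained inside the proof of Proposition \ref{Hcomodule} — that $\rho$ is a morphism of algebras from $C$ to $C\ot H$. That multiplicativity is exactly the identity $\rho\circ m_{C}=(m_{C}\ot m_{H})\circ(Id_{C}\ot\nu\ot Id_{H})\circ(\rho\ot\rho)$, so the comodule-algèbre axiom for $m_{C}$ comes for free. The companion axiom $\rho\circ\eta_{C}=(\eta_{C}\ot\eta_{H})\circ\rho_{|_{\K}}$ reduces to $\rho(1_{C})=1_{C}\ot 1_{H}$, which is just the value of $\rho$ on the empty word ($q=0$) read off its definition. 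Hence $C$ is a comodule-algèbre, and the whole task comes down to the comodule-cogèbre axioms.

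Among these, the counit condition $(\varepsilon_{C}\ot Id_{H})\circ\rho=\eta_{H}\circ\varepsilon_{C}$ is immediate: on a word $(0,n_{1},\dots,n_{q})$ with $q\geq1$ every term of $\rho$ carries a left factor $(0,k_{1},\dots,k_{q})$ annihilated by $\varepsilon_{C}$, while $\eta_{H}\circ\varepsilon_{C}$ also vanishes, and on $1_{C}$ both sides return $1_{H}$. The real content is the coproduct compatibility $(\De_{C}\ot Id_{H})\circ\rho=(Id_{C}\ot Id_{C}\ot m_{H})\circ(Id_{C}\ot\nu\ot Id_{H})\circ(\rho\ot\rho)\circ\De_{C}$.

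My strategy for this last identity is to notice that both sides are morphisms of algebras from $C$ to $C\ot C\ot H$, and then to test them only on generators. On the left, $\rho$ and $\De_{C}\ot Id_{H}$ are algebra morphisms (the latter because $C$ is a bigèbre). On the right, $\De_{C}$ and $\rho\ot\rho$ are algebra morphisms, the middle twist $Id_{C}\ot\nu\ot Id_{H}$ is an algebra isomorphism between the two tensor-product algebras (it merely permutes tensor slots, and the products are componentwise), and $Id_{C}\ot Id_{C}\ot m_{H}$ is an algebra morphism precisely because $H$ is commutative — which is exactly the hypothesis feeding Théorème \ref{coprodsemidirect}. Since $C=\tens{(0,n),n\in\N^{*}}$ is freely generated as an algebra by the elements $(0,n)$, it therefore suffices to check the identity on these generators.

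On a generator the computation is short because $(0,n)$ is primitive for $\De_{C}$. The left side expands to $\sum_{k=1}^{n-1}\binom{n}{k}\big[(0,k)\ot 1_{C}+1_{C}\ot(0,k)\big]\ot(n-k)+\big[(0,n)\ot 1_{C}+1_{C}\ot(0,n)\big]\ot 1_{H}$, and on the right $\De_{C}((0,n))=(0,n)\ot 1_{C}+1_{C}\ot(0,n)$, so applying $\rho\ot\rho$ followed by the twist and $m_{H}$ — using $\rho(1_{C})=1_{C}\ot 1_{H}$ and $m_{H}(h\ot 1_{H})=m_{H}(1_{H}\ot h)=h$ — reproduces the same expression term by term. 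The two sides thus agree on every generator, hence on all of $C$ by multiplicativity, which gives the coproduct compatibility and finishes the proof. The only delicate point is bookkeeping the interaction of the twist $\nu$ with the merge $m_{H}$, but once the verification is localised at the primitive generators it is entirely routine.
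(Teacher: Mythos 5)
Your proposal is correct and follows essentially the same route as the paper: the comodule-algèbre half is inherited from the multiplicativity of $\rho$ established in the proof of la proposition \ref{Hcomodule}, and the coproduct compatibility is reduced to the generators $(0,n)$ because all the maps involved are morphisms d'algèbres, then checked by the same direct computation. Your added remark that $Id_{C}\ot Id_{C}\ot m_{H}$ is multiplicative only because $H$ is commutative makes explicit a point the paper leaves implicit, but the argument is the same.
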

\begin{proof}
	\begin{description}
		\item[Structure de comodule-algèbre.] 
		$\rho(1_{C})=1_{C}\ot1_{H}$ par définition. Dans la démonstration de la proposition \ref{Hcomodule}, nous avons déjà prouvé l'égalité entre les morphismes $\rho\circ m_{C}$ et $(m_{C}\ot m_{H})\circ(Id_{C}\ot\nu\ot Id_{H})\circ(\rho\ot\rho)$.
		\item[Structure de comodule-cogèbre.] 
		Il est immédiat que $(\varepsilon_{C}\ot Id_{H})\circ\rho=\eta_{H}\circ\varepsilon_{C}$. Soit $(0,n)\in C\setminus\{1_{C}\}$. On a alors:
		\begin{align*}
			(\De_{C}\ot Id_{H})\circ\rho((0,n))=&\sum_{k=1}^{n-1}\binom{n}{k}(0,k)\ot1_{C}\ot(n-k)+\sum_{k=1}^{n-1}\binom{n}{k}1_{C}\ot(0,k)\ot(n-k)\\
			+&(0,n)\ot1_{C}\ot1_{H}+1_{C}\ot(0,n)\ot1_{H}
		\end{align*}
		et
		\begin{align*}
			g((0,n))=&(Id_{C}\ot Id_{C}\ot m_{H})\circ(Id_{C}\ot\nu\ot Id_{H})\circ(\rho\ot\rho)\circ\De_{C}((0,n))\\
			=&\sum_{k=1}^{n-1}\binom{n}{k}(0,k)\ot1_{C}\ot(n-k)+(0,n)\ot1_{C}\ot1_{H}\\
			+&\sum_{k=1}^{n-1}\binom{n}{k}1_{C}\ot(0,k)\ot(n-k)+1_{C}\ot(0,n)\ot1_{H}.
		\end{align*}
		Donc, $(\De_{C}\ot Id_{H})\circ\rho((0,n))=(Id_{C}\ot Id_{C}\ot m_{H})\circ(Id_{C}\ot\nu\ot Id_{H})\circ(\rho\ot\rho)\circ\De_{C}((0,n))$. Comme $\rho$ et $\De_{C}$ sont des morphismes d'algèbres, l'égalité précédente suffit pour montrer l'égalité entre $(\De_{C}\ot Id_{H})\circ\rho$ et $(Id_{C}\ot Id_{C}\ot m_{H})\circ(Id_{C}\ot\nu\ot Id_{H})\circ(\rho\ot\rho)\circ\De_{C}$.
	\end{description}
\end{proof}
D'après le théorème \ref{coprodsemidirect} page \pageref{coprodsemidirect}, on peut considérer l'algèbre de Hopf coproduit semi-direct $H\rtimes C$.
\begin{theo}\label{isomCoprodSemidirect}
	L'application 
	$$\Upsilon:\left\{\begin{array}{rcl} \Cp &\longrightarrow & H\rtimes C \\(\alpha_{0},\alpha_{1},\dots,\alpha_{k}) &\longrightarrow& (\alpha_{0})\otimes(0,\alpha_{1},\dots,\alpha_{k})\end{array}\right.$$ est un isomorphisme d'algèbres de Hopf.
\end{theo}
\begin{proof}
	Le fait que $\Upsilon$ soit un isomorphisme d'algèbres est immédiat. Dans cette démonstration notons $\De_{\Cp}$ (respectivement $\overline{\De}$) le coproduit de $\Cp$ (respectivement le coproduit de $H\rtimes C$). Montrons que $(\Upsilon\otimes\Upsilon)\circ\De_{\Cp}=\overline{\De}\circ\Upsilon$. Comme $\Upsilon$, $\De_{\Cp}$ et $\overline{\De}$ sont des morphismes d'algèbres, il suffit de le montrer pour $(1)$ et tout élément de $\Cp$ de la forme $(0,n)$ où $n\in\N^{*}$. Pour $(1)$, c'est immédiat. Soit maintenant $n$ un entier naturel non nul. 
	\begin{align*}
		\overline{\De}\circ\Upsilon((0,n))=&\sum_{k=1}^{n-1}\binom{n}{k}[1_{H}\otimes(0,k)]\ot[(n-k)\otimes1_{C}]\\
		+&[1_{H}\otimes(0,n)]\ot[1_{H}\otimes1_{C}]+[1_{H}\otimes1_{C}]\ot[1_{H}\otimes(0,n)]\\
		=&\sum_{k=1}^{n-1}\binom{n}{k}\Upsilon\otimes\Upsilon((0,k)\ot(n-k))+\Upsilon\otimes\Upsilon((0,n)\ot1_{\Cp})+\Upsilon\otimes\Upsilon(1_{\Cp}\ot(0,n))\\
		=&(\Upsilon\otimes\Upsilon)\circ\De_{\Cp}((0,n)).
	\end{align*}
\end{proof}

\paragraph{Actions liées à la structure de coproduit semi-direct.}
Pour définir la structure de coproduit semi-direct de $C=\tens{(0,n),n\in\N^{*}}$ sur $H=\K[(1)]$, nous avons eu recours à une coaction à droite $\rho$, à savoir:
$$\rho:\left\{\begin{array}{rcl} C &\longrightarrow & C\ot H \\(0,n_{1},\dots,n_{q}) &\longrightarrow& \displaystyle\sum\limits_{s=q}^{n_{1}+\dots+n_{q}-1}\sum\limits_{\substack{ k_{1}+\dots+k_{q}=s \\ 1\leq k_{j}\leq n_{j}}}\binom{n_{1}}{k_{1}}\dots\binom{n_{q}}{k_{q}}(0,k_{1},\dots,k_{q})\ot(\sum\limits_{j=1}^{q}n_{j}-k_{j})\\
&&+(0,n_{1},\dots,n_{q})\ot1_{H}.\end{array}\right.$$
Cette coaction permet de mettre à jour deux actions: l'une de $\dual{H}$ sur $\dual{C}$, l'autre des caractères de $H$ sur ceux de $C$.

\subparagraph{Action duale de $\rho$.}
Transposons l'opération $\rho$ afin d'obtenir une action à gauche de $\dual{H}$ sur $\dual{C}$. Avant de décrire cette action, notons d'abord $(Z_{(0,n_{1},\dots,n_{s})})_{(s,(n_{1},\dots,n_{s}))\in\N^{*}\times(\N^{*})^{s}}$ et $(Z_{(k)})_{k\in\N^{*}}$ les bases duales respectives de $\dual{C}$ et $\dual{H}$.
\begin{prop}\label{actionDuale}
	L'action duale de $\rho$ est donnée par le morphisme:
	$$\rho^{*}:\left\{\begin{array}{rcl} \dual{C}\ot\dual{H}  &\longrightarrow & \dual{C}\\  Z_{(0,n_{1},\dots,n_{s})} \ot Z_{(k)} &\longrightarrow &\displaystyle\sum\limits_{\substack{\delta_{1}+\dots+\delta_{s}=k\\ \delta_{i}\in\N}}\binom{n_{1}+\delta_{1}}{n_{1}}\dots\binom{n_{s}+\delta_{s}}{n_{s}}Z_{(0,n_{1}+\delta_{1},\dots,n_{s}+\delta_{s})},\\ 
	1_{\dual{C}}\ot Z_{(k)} &\longrightarrow &0. 
	\end{array}\right.$$
\end{prop}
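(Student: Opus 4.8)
Le plan est d'obtenir $\rho^{*}$ par simple transposition de la coaction $\rho$ de la proposition \ref{Hcomodule}, relativement au couplage de dualit\'e identifiant $(C\ot H)^{\circledast}$ \`a $\dual{C}\ot\dual{H}$. Par d\'efinition de l'action duale, pour toutes formes $\phi\in\dual{C}$ et $\psi\in\dual{H}$ et tout \'el\'ement $c\in C$, on a $\langle\rho^{*}(\phi\ot\psi),c\rangle=\langle\phi\ot\psi,\rho(c)\rangle$. Il suffit donc d'\'evaluer $\rho^{*}(Z_{(0,n_{1},\dots,n_{s})}\ot Z_{(k)})$ sur un \'el\'ement de base $(0,m_{1},\dots,m_{t})$ de $C$ et de lire le coefficient obtenu.

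Je commencerais par d\'evelopper $\rho((0,m_{1},\dots,m_{t}))$ via la formule de la proposition \ref{Hcomodule}, puis par y appliquer $Z_{(0,n_{1},\dots,n_{s})}\ot Z_{(k)}$. Le couplage ne retient que les termes dont la premi\`ere composante tensorielle vaut $(0,n_{1},\dots,n_{s})$ et dont la seconde vaut $(k)$. Dans la sommation interne de $\rho$, la premi\`ere condition force $t=s$ et $k_{j}=n_{j}$ pour tout $j$, et la seconde impose $\sum_{j}m_{j}-\sum_{j}k_{j}=k$. En posant $\delta_{j}=m_{j}-n_{j}$, l'in\'egalit\'e $1\leq k_{j}\leq m_{j}$ devient $\delta_{j}\geq 0$ (la condition $n_{j}\geq 1$ \'etant automatique), et la contrainte sur la seconde composante se r\'eduit \`a $\delta_{1}+\dots+\delta_{s}=k$. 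Le coefficient $\binom{m_{1}}{k_{1}}\dots\binom{m_{s}}{k_{s}}$ se r\'e\'ecrit alors $\binom{n_{1}+\delta_{1}}{n_{1}}\dots\binom{n_{s}+\delta_{s}}{n_{s}}$. Comme $k\geq 1$, le terme isol\'e $(0,m_{1},\dots,m_{t})\ot1_{H}$ de $\rho$ ne contribue pas, puisque $Z_{(k)}(1_{H})=0$; en regroupant selon les uplets $(\delta_{1},\dots,\delta_{s})\in\N^{s}$ de somme $k$, on retrouve exactement la formule annonc\'ee.

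Pour le cas de l'unit\'e, j'observerais que $\rho(1_{C})=1_{C}\ot1_{H}$ et que, dans $\rho((0,m_{1},\dots,m_{t}))$, aucune premi\`ere composante tensorielle n'est \'egale \`a l'unit\'e $1_{C}=(0)$ d\`es que $t\geq 1$, de sorte que le couplage avec $1_{\dual{C}}=Z_{(0)}$ est nul; la seule possibilit\'e restante conduit \`a \'evaluer $Z_{(k)}$ en $1_{H}$, d'o\`u $\rho^{*}(1_{\dual{C}}\ot Z_{(k)})=0$. La difficult\'e est combinatoire plut\^ot que conceptuelle: le point d\'elicat est de traduire fid\`element les contraintes de sommation de $\rho$ (bornes de $s$, in\'egalit\'es $1\leq k_{j}\leq m_{j}$, terme isol\'e en $1_{H}$) en les contraintes $\delta_{j}\geq 0$ et $\sum_{j}\delta_{j}=k$, et de v\'erifier que l'identit\'e binomiale mise en jeu est bien $\binom{m_{j}}{n_{j}}=\binom{n_{j}+\delta_{j}}{n_{j}}$. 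Ce dictionnaire combinatoire une fois pos\'e, le r\'esultat s'obtient par lecture directe des coefficients.
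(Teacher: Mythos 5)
Your computation is correct, and it coincides with what the paper does: the paper's proof of this proposition is literally the one-line remark that it follows by direct computation, and your transposition argument $\langle\rho^{*}(\phi\ot\psi),c\rangle=\langle\phi\ot\psi,\rho(c)\rangle$, with the substitution $\delta_{j}=m_{j}-n_{j}$ and the observation that the isolated term $(0,m_{1},\dots,m_{t})\ot 1_{H}$ is killed by $Z_{(k)}$ for $k\geq1$, is exactly that computation carried out in full.
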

\begin{proof}
	On le démontre par calcul direct.
\end{proof}

\Ex{}
\begin{align*}
	\rho^{*}(Z_{(0,n_{1},\dots,n_{s})}\ot 1_{\dual{H}})=&Z_{(0,n_{1},\dots,n_{s})},\\
	\rho^{*}(Z_{(0,5,23,4)}\ot Z_{(1)})=&Z_{(0,6,23,4)}+Z_{(0,5,24,4)}+Z_{(0,5,23,5)},\\
	\rho^{*}(Z_{(0,5,23,4)}\ot Z_{(2)})=&Z_{(0,7,23,4)}+Z_{(0,5,25,4)}+Z_{(0,5,23,6)}+Z_{(0,6,24,4)}+Z_{(0,6,23,5)}+Z_{(0,5,24,5)}.
\end{align*}

\subparagraph{Action des caractères de $\K[(1)]$ sur ceux de $\tens{(0,n),n\in\N^{*}}$.}
Notons $Char(H)$ et $Char(C)$ les caractères de $H$ et $C$ respectivement. On désigne par $\K[[X]]_{+}$ l'espace vectoriel des séries formelles sans terme constant.
\begin{prop}
	Définissons les morphismes:
		$$\omega_{H}:\left\{\begin{array}{rcl} (\K,+) &\longrightarrow & (Char(H),+) \\ \lambda &\longrightarrow & \left\{\begin{array}{rcl} H &\longrightarrow & \K \\ (1) &\longrightarrow &\lambda,\end{array}\right. \end{array}\right.$$
		
		$$\omega_{C}:\left\{\begin{array}{rcl} (\K[[X]]_{+},+) &\longrightarrow & (Char(C),+) \\ \sum\limits_{n=1}^{\infty}a_{n}X^{n} &\longrightarrow & \left\{\begin{array}{rcl} C &\longrightarrow & \K \\ (0,s) &\longrightarrow & s!a_{s},\end{array}\right. \end{array}\right.$$
		et 
		$$\overline{\omega}_{C}:\left\{\begin{array}{rcl} (\K[[X]]_{+},+) &\longrightarrow & (Char(C),+) \\ \sum\limits_{n=1}^{\infty}a_{n}X^{n} &\longrightarrow & \left\{\begin{array}{rcl} C &\longrightarrow & \K \\ (0,s) &\longrightarrow & \displaystyle\frac{a_{s}}{s!}.\end{array}\right. \end{array}\right.$$
	Ces trois applications sont des isomorphismes de groupes.
\end{prop}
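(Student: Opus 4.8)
The plan is to treat all three maps uniformly, exploiting that their targets are character groups of Hopf algebras whose generators are primitive. Recall that for any Hopf algebra $A$ the set $Char(A)$ of algebra morphisms $A \to \K$ is a group under the convolution product $\chi \star \psi = m_{\K} \circ (\chi \ot \psi) \circ \De_A$, with unit $\varepsilon_A$ and inverse $\chi \circ S_A$; this convolution is precisely the operation written additively in the statement. Since $(1)$ is primitive in $H$ and each generator $(0,n)$ is primitive in $C$, both $H$ and $C$ are cocommutative (each being an enveloping-type Hopf algebra on primitive generators), so $Char(H)$ and $Char(C)$ are abelian, in agreement with the additive source groups $(\K,+)$ and $(\K[[X]]_{+},+)$.

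For $\omega_{H}$: the algebra $H = \K[(1)]$ is the polynomial algebra on the single generator $(1)$, so an algebra morphism $H \to \K$ is determined freely and uniquely by the scalar $\lambda$ it assigns to $(1)$; thus $\lambda \mapsto \omega_{H}(\lambda)$ is a bijection onto $Char(H)$. It is a group morphism because $(1)$ is primitive: from $\De_{H}((1)) = (1)\ot 1 + 1 \ot (1)$ one gets $(\omega_{H}(\lambda) \star \omega_{H}(\mu))((1)) = \lambda + \mu$, whence $\omega_{H}(\lambda) \star \omega_{H}(\mu) = \omega_{H}(\lambda + \mu)$. So $\omega_{H}$ is an isomorphism of groups.

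For $\omega_{C}$ and $\overline{\omega}_{C}$: since $C = \tens{(0,n), n \in \N^{*}}$ is the free associative algebra on the generators $(0,n)$, an algebra morphism $C \to \K$ is determined freely and uniquely by its values on these generators, so $Char(C)$ is in bijection with $\prod_{n \geq 1}\K$. The space $\K[[X]]_{+}$ is likewise in bijection with $\prod_{n \geq 1}\K$ through the coefficient sequence $(a_{n})_{n \geq 1}$. The maps $\omega_{C}$ and $\overline{\omega}_{C}$ are the composites of these two bijections with, respectively, the rescalings $a_{s} \mapsto s!\,a_{s}$ and $a_{s} \mapsto a_{s}/s!$ on each factor; as $\K$ has characteristic $0$ we have $s! \neq 0$, so each rescaling is a bijection of $\K$ and both $\omega_{C}$, $\overline{\omega}_{C}$ are bijections onto $Char(C)$. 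They are group morphisms for the same reason as before: each $(0,n)$ is primitive, hence $(\chi \star \psi)((0,n)) = \chi((0,n)) + \psi((0,n))$, which on the power-series side is exactly the addition $f + g$ of coefficients, giving $\omega_{C}(f) \star \omega_{C}(g) = \omega_{C}(f+g)$ and the analogous identity for $\overline{\omega}_{C}$.

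The only genuine point to pin down is the evaluation of the convolution on a single generator, which is where primitivity is used; everything else reduces to the universal property of free (polynomial, resp. tensor) algebras and to the invertibility of the factorials in characteristic $0$. I do not expect a serious obstacle, the main care being to match the additively-written group law on characters with the convolution product and to keep the two normalizations $s!$ and $1/s!$ distinct between $\omega_{C}$ and $\overline{\omega}_{C}$.
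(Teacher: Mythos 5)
Your proof is correct and is exactly the elaboration of what the paper leaves implicit: its own proof of this proposition is the single word \og Direct \fg{}, and the intended argument is precisely the one you give (universal property of the polynomial, resp.\ tensor, algebra for bijectivity; primitivity of the generators to identify the convolution with addition of values; invertibility of $s!$ in characteristic $0$ for the two rescalings). Nothing is missing.
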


\begin{proof}
	Direct.
\end{proof}

Grâce aux isomorphismes $\omega_{H}$, $\omega_{C}$ et $\overline{\omega}_{C}$, donner une action de $Char(H)$ sur $Char(C)$ revient à donner une action de $\K$ sur $\K[[X]]_{+}$.

\begin{prop}\label{actionCaracteres}
	Soit $\Omega$ l'application:
	$$\Omega:\left\{\begin{array}{rcl} \K\times\K[[X]]_{+} &\longrightarrow & \K[[X]]_{+} \\ (\lambda,a) &\longrightarrow & \displaystyle\sum_{s=1}^{\infty}\overline{\omega}_{C}\Bigg(\sum\limits_{n=1}^{\infty}[(\omega_{C}(a)\ot\omega_{H}(\lambda))\circ \rho]((0,n))X^{n}\Bigg)((0,s))X^{s} \end{array}\right.$$ où $\rho$ désigne la coaction $\rho: C \longrightarrow C\ot H$ définie dans le paragraphe précédent.
	Il s'agit de l'application $$\Omega:\left\{\begin{array}{rcl} \K\times\K[[X]]_{+} &\longrightarrow & \K[[X]]_{+} \\ (\lambda,a) &\longrightarrow & ae^{\lambda X}. \end{array}\right.$$ Elle définit donc une action de $\K$ sur $\K[[X]]_{+}$ et, par suite, une action de $Char(H)$ sur $Char(C)$.
\end{prop}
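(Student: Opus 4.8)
The plan is to prove the identity $\Omega(\lambda,a)=ae^{\lambda X}$ by directly unwinding the definition of $\Omega$, and then to read off the action axioms from the resulting closed formula. Since $\Omega$, $\omega_C$, $\omega_H$ and $\overline{\omega}_C$ are all built additively/multiplicatively from the generators, and since the outer expression $\overline{\omega}_C(\,\cdot\,)((0,s))$ simply extracts the coefficient $b_s/s!$ of a power series $b=\sum_n b_nX^n$, everything reduces to computing the scalars $b_n=[(\omega_C(a)\otimes\omega_H(\lambda))\circ\rho]((0,n))$ for $n\in\N^*$. First I would recall from proposition \ref{Hcomodule} the value of the coaction on a single generator, namely $\rho((0,n))=\sum_{k=1}^{n-1}\binom{n}{k}(0,k)\otimes(n-k)+(0,n)\otimes1_{H}$.

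Next I would evaluate the two characters on each tensor factor. By definition $\omega_C(a)((0,k))=k!\,a_k$ when $a=\sum_{k\geq1}a_kX^k$, while $\omega_H(\lambda)$ sends $(1)$ to $\lambda$ and is multiplicative, so $\omega_H(\lambda)((m))=\lambda^m$ and $\omega_H(\lambda)(1_H)=1$. Applying $\omega_C(a)\otimes\omega_H(\lambda)$ to the formula above, the last summand $(0,n)\otimes1_H$ becomes exactly the missing $k=n$ term of the principal sum, so
\[
b_n=\sum_{k=1}^{n-1}\binom{n}{k}(k!\,a_k)\,\lambda^{n-k}+n!\,a_n=\sum_{k=1}^{n}\binom{n}{k}k!\,a_k\,\lambda^{n-k}=\sum_{k=1}^{n}\frac{n!}{(n-k)!}\,a_k\,\lambda^{n-k}.
\]
Dividing by $n!$ through $\overline{\omega}_C$ gives the $X^n$-coefficient $\sum_{k=1}^{n}\frac{\lambda^{n-k}}{(n-k)!}\,a_k$ of $\Omega(\lambda,a)$. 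This is precisely the Cauchy product coefficient of $\big(\sum_{k\geq1}a_kX^k\big)\big(\sum_{m\geq0}\frac{\lambda^m}{m!}X^m\big)$, that is of $a\,e^{\lambda X}$. Each coefficient being a finite sum, the series lies in $\K[[X]]_+$, and we conclude $\Omega(\lambda,a)=ae^{\lambda X}$.

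Finally I would check the action axioms on the clean formula. We have $\Omega(0,a)=a\,e^{0}=a$, and for $\lambda,\mu\in\K$, using $e^{\mu X}e^{\lambda X}=e^{(\lambda+\mu)X}$ in $\K[[X]]$, $\Omega(\lambda,\Omega(\mu,a))=(a\,e^{\mu X})e^{\lambda X}=a\,e^{(\lambda+\mu)X}=\Omega(\lambda+\mu,a)$. Hence $\Omega$ is an action of the additive group $(\K,+)$ on $\K[[X]]_+$; transporting it through the group isomorphisms $\omega_H$, $\omega_C$ (or $\overline{\omega}_C$) then yields the announced action of $Char(H)$ on $Char(C)$.

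The bulk of the work is the index bookkeeping in the second paragraph; the only genuine subtlety is recognizing that the term $(0,n)\otimes1_H$ completes the binomial sum to range over $k=1,\dots,n$, and that the division by $s!$ built into $\overline{\omega}_C$ is exactly what turns the purely algebraic coefficients $b_n$ into the exponential series. Once the formula $\Omega(\lambda,a)=ae^{\lambda X}$ is established, the action property is immediate from the functional equation of the exponential, so no further obstacle remains.
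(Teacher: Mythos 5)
Your proposal is correct and follows essentially the same route as the paper: both compute the scalar $b_n=[(\omega_C(a)\ot\omega_H(\lambda))\circ\rho]((0,n))=\sum_{k=1}^{n}\binom{n}{k}k!\,a_k\lambda^{n-k}=n!\sum_{k=1}^{n}a_k\frac{\lambda^{n-k}}{(n-k)!}$ and identify $b_n/n!$ with the $X^n$-coefficient of $ae^{\lambda X}$. The only difference is cosmetic: you spell out that the term $(0,n)\ot 1_H$ completes the binomial sum and you verify the action axioms $\Omega(0,a)=a$ and $\Omega(\lambda,\Omega(\mu,a))=\Omega(\lambda+\mu,a)$ explicitly, which the paper leaves implicit.
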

\begin{proof}
	Soient $\lambda$ un élément de $\K$, $a$ une série formelle sans terme constant et $n$ un entier naturel non nul. Posons $b=\sum\limits_{s=1}^{\infty}b_{s}X^{s}=\Omega(\lambda,a)$. On a alors:
	$$(\omega_{C}(a)\ot\omega_{H}(\lambda))\circ \rho((0,n))=\sum_{k=1}^{n}\binom{n}{k}k!a_{k}\lambda^{n-k}=n!\sum_{k=1}^{n}a_{k}\frac{\lambda^{n-k}}{(n-k)!}.$$
	Or, $$(\omega_{C}(a)\ot\omega_{H}(\lambda))\circ \rho((0,n))=n!b_{n}.$$
	On obtient donc que $b_{n}=\langle ae^{\lambda X},X^{n}\rangle$. Ainsi $b=ae^{\lambda X}$ et l'application $\Omega$ définit une action.
\end{proof}

Intéressons-nous maintenant aux éléments primitifs de $\Cp$ et déterminons-en quelques propriétés. 
\paragraph{Etude des éléments primitifs de $\Cp$.}
\begin{prop}\label{primCPzero}
	Soient $n\in\N$ avec $n\geq2$, $k\in\N$ et $u=\sum\limits_{s=1}^{k}\mu_{s}(\beta_{s,0},\dots,\beta_{s,p_{s}})$ un élément primitif de $(\Cp)_{n}$. Supposons qu'il existe un entier $s\in\{1,\dots,k\}$ tel que $(\beta_{s,0}\neq 0\text{ et } p_{s}\geq1)$ ou $(\beta_{s,0}\geq 2\text{ et } p_{s}=0)$. On a alors: $\mu_{s}=0$.
\end{prop}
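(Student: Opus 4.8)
The plan is to read off primitivity directly from the explicit coproduct of $\Cp$. Since $u$ lies in $(\Cp)_{n}$ and is primitive, $\tilde{\De}(u)=0$; writing $u=\sum_{s=1}^{k}\mu_{s}v_{s}$ with the $v_{s}=(\beta_{s,0},\dots,\beta_{s,p_{s}})$ distinct extended compositions of degree $n$, this means that for every tensor $L\ot R$ with $L$ and $R$ both of positive degree, the total coefficient of $L\ot R$ in $\sum_{s}\mu_{s}\De(v_{s})$ must vanish. My strategy is to produce, for the distinguished index $s$ of the statement, one such tensor $L\ot R$ occurring in $\De(v_{s})$ that appears in $\De(v_{t})$ for no other basis vector $v_{t}$ of degree $n$; its coefficient in $\tilde{\De}(u)$ is then a nonzero scalar multiple of $\mu_{s}$, which forces $\mu_{s}=0$.

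The key observation, read straight off the coproduct formula, is a rigidity property of the \emph{left-hand} tensor factors. A general summand of $\De\big((\gamma_{0},\dots,\gamma_{q})\big)$ has left factor $(a,k_{i_{1}},\dots,k_{i_{m}})$, which has exactly $m+1$ parts, where $m$ is the number of selected indices $i_{1}<\dots<i_{m}$. Consequently a tensor whose left factor is a \emph{one-part} extended composition $(a)$ can only arise from terms with $m=0$, i.e.\ with $I=\emptyset$; and for such terms the summand is exactly $(a)\ot(\gamma_{0}-a,\gamma_{1},\dots,\gamma_{q})$ with coefficient $\binom{\gamma_{0}}{a}$, for $0\le a\le\gamma_{0}$. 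In particular the right factor then determines $(\gamma_{1},\dots,\gamma_{q})$, and together with $a$ it determines $\gamma_{0}$; so the pair $\big((a),(\gamma_{0}-a,\gamma_{1},\dots,\gamma_{q})\big)$ recovers the whole extended composition $(\gamma_{0},\dots,\gamma_{q})$ uniquely.

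I then treat the two cases of the hypothesis separately, writing $\beta_{0}=\beta_{s,0}$ and $p=p_{s}$. If $\beta_{0}\neq0$ and $p\ge1$ (so $\beta_{0}\ge1$), I take $L\ot R=(\beta_{0})\ot(0,\beta_{s,1},\dots,\beta_{s,p})$: by the rigidity above it comes only from $v_{s}$, with coefficient $\binom{\beta_{0}}{\beta_{0}}=1$, and both factors have positive degree ($R$ because $p\ge1$), so it is a genuine reduced term. If instead $\beta_{0}\ge2$ and $p=0$, so $v_{s}=(\beta_{0})$, I take $L\ot R=(1)\ot(\beta_{0}-1)$, which occurs in $\De\big((\beta_{0})\big)$ with coefficient $\binom{\beta_{0}}{1}=\beta_{0}\neq0$, again comes from no other composition by rigidity, and is reduced because $\beta_{0}\ge2$. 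In either case the coefficient of $L\ot R$ in $\tilde{\De}(u)$ equals a nonzero multiple of $\mu_{s}$, and the vanishing of $\tilde{\De}(u)$ gives $\mu_{s}=0$.

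The point to get right is the rigidity claim, namely that no summand coming from some other $v_{t}$, and in particular none with $m\ge1$ selected indices, can accidentally reproduce the chosen tensor $L\ot R$. This is exactly what the one-part shape of $L$ rules out, since $m\ge1$ forces at least two parts in the left factor; once that is secured, the remainder is routine bookkeeping with the binomial coefficients appearing in the coproduct of $\Cp$.
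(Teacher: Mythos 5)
Your proof is correct and follows essentially the same strategy as the paper: isolate one tensor in the reduced coproduct whose total coefficient is a nonzero multiple of $\mu_{s}$, and conclude from primitivity. The only difference is cosmetic --- in the first case the paper extracts a single letter $x_{0}$ (left factor $(1)$, coefficient $\beta_{s,0}\mu_{s}$) where you extract all of them (left factor $(\beta_{s,0})$, coefficient $\mu_{s}$) --- and your explicit ``rigidity'' argument for one-part left factors is exactly the step the paper leaves implicit.
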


\begin{proof}
	Soient $n\in\N$ avec $n\geq2$, $k\in\N$ et $u=\sum\limits_{s=1}^{k}\mu_{s}(\beta_{s,0},\dots,\beta_{s,p_{s}})$ un élément de $\Prim{\Cp}{n}$. 
	\begin{description}
		\item[Cas $\beta_{s,0}\neq 0\text{ et } p_{s}\geq1$.] Le coefficient $c_{s}$ de $(1)\ot(\beta_{s,0}-1,\beta_{s,1},\dots,\beta_{s,p_{s}})$ dans $\De(u)$ est donc nul. Or $c_{s}=\beta_{s,0}\mu_{s}$. Donc $\mu_{s}=0$.
		\item[Cas $\beta_{s,0}\geq 2\text{ et } p_{s}=0$.] Le coefficient $c_{s}$ de $(1)\ot(\beta_{s,0}-1)$ dans $\De(u)$ est donc nul. Or $c_{s}=\beta_{s,0}\mu_{s}$. Donc $\mu_{s}=0$.
	\end{description}
\end{proof}

Pour obtenir des informations supplémentaires sur $Prim(\Cp)$, regardons, pour la proposition suivante, $\Cp$ sous sa version $\WMat/\Idl{J}$. Dans $\WMat/\Idl{J}$, on identifie les classes avec les mots tassés $\underbrace{x_{0}\dots x_{0}}_{\alpha_{0} \text{ fois}}\dots\underbrace{x_{k}\dots x_{k}}_{\alpha_{k} \text{ fois}}$ ($k,\alpha_{0}\in\N$, $\alpha_{1},\dots,\alpha_{k}\in\N^{*}$).

\begin{prop}\label{lienCpISPW}
	Soient $k\in\N^{*}$ et $u=\sum\limits_{s=1}^{k}\mu_{s}\underbrace{x_{1}\dots x_{1}}_{\beta_{s,1} \text{ fois}}\dots\underbrace{x_{p_{s}}\dots x_{p_{s}}}_{\beta_{s,p_{s}} \text{ fois}}$ un élément de $\Cp$.
	$$u\in Prim(\WMat/\Idl{J})\Rightarrow u\in Prim(\ISPW).$$
\end{prop}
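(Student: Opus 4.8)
The plan is to compare the two comultiplications at play---the comultiplication $\De$ of $\Cp$ and the comultiplication of $\ISPW$---on the common space of increasing strict packed words. Both sides of the implication concern the same linear combination $u$ of words of the form $\underbrace{x_1\dots x_1}_{\beta_{s,1}}\dots\underbrace{x_{p_s}\dots x_{p_s}}_{\beta_{s,p_s}}$; such words carry no letter $x_0$, so as an element of $\Cp$ the vector $u$ lies in the subalgebra spanned by the extended compositions $(0,\alpha_1,\dots,\alpha_k)$, i.e. those with $\alpha_0=0$. I would introduce the linear projection $\pi_0:\Cp\longrightarrow\Cp$ that fixes every basis composition with $\alpha_0=0$ and kills every basis composition with $\alpha_0\geq1$, together with $P=\pi_0\ot\pi_0$ on $\Cp\ot\Cp$.

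The heart of the argument is the identity $P\circ\De=\De_{\ISPW}$ on increasing strict packed words, where on the left $\De$ is the comultiplication of $\Cp$ and on the right $\De_{\ISPW}$ is that of $\ISPW$. I would check this directly from the explicit formula for $\De$ on $\Cp$: for $w=(0,\alpha_1,\dots,\alpha_p)$ the constant $a$ is forced to be $0$, so the left-hand tensor factor $(0,k_{i_1},\dots,k_{i_n})$ always has $x_0$-frequency $0$, whereas the right-hand factor has $x_0$-frequency $\sum_{j}(\alpha_{i_j}-k_{i_j})\geq0$. Applying $P$ therefore keeps exactly the terms for which $\sum_j(\alpha_{i_j}-k_{i_j})=0$, that is $k_{i_j}=\alpha_{i_j}$ for every $j$ (no block is split). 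For these terms all binomial coefficients equal $1$ and one is left with $\sum_{I+U=\{1,\dots,p\}}(0,\alpha_{i_1},\dots)\ot(0,\alpha_{u_1},\dots)$, which is precisely $\De_{\ISPW}(w)$, the sum over all ways of sending whole blocks to the left set $I$ or to the right set $U$. Extending by linearity gives $P(\De(u))=\De_{\ISPW}(u)$.

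With this identity in hand the conclusion is immediate. Assuming $u\in Prim(\Cp)$, i.e. $\De(u)=u\ot1+1\ot u$ in $\Cp\ot\Cp$, I would apply $P$ to both sides: the left-hand side becomes $\De_{\ISPW}(u)$ by the identity above, while the right-hand side is unchanged because $u$ and the unit $1=(0)$ both have $\alpha_0=0$ and are thus fixed by $\pi_0$. Hence $\De_{\ISPW}(u)=u\ot1+1\ot u$, that is $u\in Prim(\ISPW)$.

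The only real obstacle is the bookkeeping in the key identity: one must match the surviving terms of the $\Cp$-coproduct (those that create no $x_0$ on either side, equivalently those that split no block) with the block-partition terms of $\De_{\ISPW}$, and confirm that the boundary terms $n=0$ and $n=p$ reproduce $1\ot w$ and $w\ot1$ correctly. Everything else is formal linear algebra; in particular $P$ is used merely as a linear map, never as a coalgebra morphism.
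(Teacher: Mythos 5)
Your proof is correct and takes essentially the same route as the paper: the paper's argument extracts the coefficients of the block-partition (i.e.\ $x_{0}$-free) tensors in $\De_{\WMat/\Idl{J}}(u)$, notes they must vanish by primitivity, and observes that these tensors are exactly the terms of $\De_{\ISPW}(u)$ --- which is precisely your identity $P\circ\De=\De_{\ISPW}$ stated in coefficient form. Your projection $\pi_{0}\ot\pi_{0}$ is just a cleaner packaging of that same coefficient comparison, and the verification that only the unsplit-block terms survive (forcing $k_{i_{j}}=\alpha_{i_{j}}$ and trivializing the binomial coefficients) matches the paper's bookkeeping.
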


\begin{proof} Dans la démonstration, les coproduits de $\WMat/\Idl{J}$ et $\ISPW$ seront notés $\De_{\WMat/\Idl{J}}$ et $\De_{\ISPW}$ respectivement.
Soient $k\in\N^{*}$ et $u=\sum\limits_{s=1}^{k}\mu_{s}\underbrace{x_{1}\dots x_{1}}_{\beta_{s,1} \text{ fois}}\dots\underbrace{x_{p_{s}}\dots x_{p_{s}}}_{\beta_{s,p_{s}} \text{ fois}}$ un élément de $\Cp$. Supposons que  $u$ soit primitif dans $\WMat/\Idl{J}$. Alors, pour tout $s\in\{1,\dots,k\}$ et tout $k$-uplet $(\epsilon_{1},\dots,\epsilon_{k})\in(\{0,1\})^{k}\setminus\{(0,\dots,0),(1,\dots,1)\}$, le coefficient du tenseur $$(\underbrace{x_{1}\dots x_{1}}_{\beta_{s,1} \text{ fois}})^{\epsilon_{1}}\ast\dots\ast(\underbrace{x_{1}\dots x_{1}}_{\beta_{s,p_{s}} \text{ fois}})^{\epsilon_{p_{s}}}\ot (\underbrace{x_{1}\dots x_{1}}_{\beta_{s,1} \text{ fois}})^{1-\epsilon_{1}}\ast\dots\ast(\underbrace{x_{1}\dots x_{1}}_{\beta_{s,p_{s}} \text{ fois}})^{1-\epsilon_{p_{s}}}$$ 
dans $\De_{\WMat/\Idl{J}}(w)$ est nul. Or, dans $\ISPW$, le coproduit de $u$ est égal à : 
$$\De_{\ISPW}(u)=\sum_{s=1}^{k}\sum_{l=0}^{p_{s}}\sum_{1\leq i_{1}<\dots<i_{l}\leq p_{s}}\mu_{s}\underbrace{x_{1}\dots x_{1}}_{\beta_{s,i_{1}} \text{ fois}}\dots\underbrace{x_{l}\dots x_{l}}_{s,\beta_{i_{l}} \text{ fois}}\ot \underbrace{x_{1}\dots x_{1}}_{\beta_{s,u_{1}} \text{ fois}}\dots\underbrace{x_{p_{s}-l}\dots x_{p_{s}-l}}_{\beta_{s,u_{p_{s}-l}} \text{ fois}}$$ où $\{u_{1}<\dots<u_{p_{s}-l}\}=\llbracket1,p_{s}\rrbracket\setminus\{i_{1},\dots,i_{l}\}$.
$u$ est donc primitif dans $\ISPW$.
\end{proof}
\Ex{1} Considérons l'élément $u=(0,1,1,2)-2(0,1,2,1)+(0,2,1,1)$. Il est primitif dans $\Cp$. Dans $\ISPW$, $u$ correspond à l'élément $w=x_{1}x_{2}x_{3}x_{3}-2x_{1}x_{2}x_{2}x_{3}+x_{1}x_{1}x_{2}x_{3}$. D'après la proposition \ref{prim}, $w$ est bien primitif dans $\ISPW$.\\ 

L'objectif maintenant est donc d'étudier la réciproque de la proposition précédente. Elle s'avère être fausse. En effet, si l'on considère les éléments primitifs de $\ISPW$ établis dans les propositions \ref{prim} et \ref{prim_Lambda}, seule une sous-famille particulière permet d'obtenir des éléments primitifs dans $\Cp$. Pour le démontrer, définissons les éléments de $\Cp$ induits par les propositions \ref{prim} et \ref{prim_Lambda}.

\begin{defi}
	Soient $n\in\N^{*}$, $(\alpha_{1},\dots,\alpha_{n})\in(\N^{*})^{n}$ et $(\beta_{1},\dots,\beta_{n})\in(\N^{*})^{n}$ un $n$-uplet d'entiers deux à deux distincts. On note $\rho=\alpha_{2}+\dots+\alpha_{n}$ et $\theta=\alpha_{1}+\dots+\alpha_{n}$. On appelle $\gamma$ le $\theta$-uplet $\gamma=(\underbrace{\beta_{1},\dots,\beta_{1}}_{\alpha_{1} \text{ fois}},\dots,\underbrace{\beta_{n},\dots,\beta_{n}}_{\alpha_{n} \text{ fois}})$. 
	
	On note $\Gamma_{\gamma}$ et $\Gamma_{\Lambda_{\gamma}}$ les éléments de $\Cp$ définis par:
	
	$$\Gamma_{\gamma}=\frac{1}{\alpha_{2}!\dots\alpha_{n}!}\sum_{\sigma\in\Sn{\rho}}\sum_{k=1}^{\alpha_{1}}(-1)^{k-1}\binom{\alpha_{1}-1}{k-1}\sum_{s=0}^{\rho}(-1)^{s}\binom{\rho}{s}u_{\gamma,\tilde{\sigma}_{k,s}}$$
	où
	\begin{align*}
	\tilde{\sigma}_{1,0}=&\begin{pmatrix}1 & \dots & \alpha_{1} & \alpha_{1}+1 & \dots & \theta \\1 & \dots & \alpha_{1} & \sigma(1)+\alpha
	_{1} & \dots & \sigma(\rho)+\alpha_{1}\end{pmatrix}\in\Sn{\theta},\\
	\tilde{\sigma}_{k,0}=&\begin{pmatrix}1 & \dots & \alpha_{1}-k+1 & \alpha_{1}-k+2 & \dots & \theta-k+1 & \theta-k+2 & \dots & \theta \\1 & \dots & \alpha_{1}-k+1 & \sigma(1)+\alpha_{1} & \dots & \sigma(\rho)+\alpha_{1} & \alpha_{1}-k+2 & \dots & \alpha_{1}\end{pmatrix}\in\Sn{\theta},\\
	\tau_{k,s}=& \begin{pmatrix}\alpha_{1}-k+s & \alpha_{1}-k+s+1 \end{pmatrix}\in\Sn{\theta} \text{ pour } 1\leq k \leq \alpha_{1} \text{ et } 1\leq s \leq \rho,\\
	\tilde{\sigma}_{k,s}=&\tilde{\sigma}_{k,s-1}\circ\tau_{k,s}\in\Sn{\theta} \text{ pour } 1\leq k \leq \alpha_{1} \text{ et } 1\leq s \leq \rho,\\
	u_{\gamma,\tilde{\sigma}_{k,s}}=&(0,\gamma_{\tilde{\sigma}_{k,s}(1)},\dots,\gamma_{\tilde{\sigma}_{k,s}(\theta)}),
	\end{align*}
	
	et

 $$\Gamma_{\Lambda_{\gamma}}=\frac{1}{\alpha_{1}!\dots\alpha_{n}!}\sum_{\sigma\in\Sn{\theta}}\sum_{i=1}^{\alpha_{1}}(-1)^{\sigma^{-1}(i)-1}\binom{\theta-1}{\sigma^{-1}(i)-1}(0,\gamma_{\sigma(1)},\dots,\gamma_{\sigma(\theta)}).$$ 
\end{defi}

Pour énoncer la proposition suivante introduisons une nouvelle notation. Soient $\kappa$ un entier non nul et $\gamma=(0,\gamma_{1},\dots,\gamma_{\kappa})$ une composition étendue. On définit l'ensemble d'éléments distincts $L_{\gamma}$ par: $$L_{\gamma}=\big\{ (0,\gamma_{\sigma(1)},\dots,\gamma_{\sigma(\kappa)}), \sigma\in\Sn{\kappa}\big \}.$$

\begin{prop}\label{conditionEcartMax}
	Soient $k$ un entier naturel non nul, $(\alpha_{1},\dots,\alpha_{k})$ et $(\beta_{1}<\dots<\beta_{k})$ deux $k$-uplets d'entiers naturels non nuls, $\beta=(0,\underbrace{\beta_{1},\dots ,\beta_{1}}_{\alpha_{1} \text{ fois}},\dots,\underbrace{\beta_{k},\dots ,\beta_{k}}_{\alpha_{k} \text{ fois}})$ une composition étendue et $u=\sum\limits_{\omega\in L_{\beta}}\mu_{\omega}\omega$ un élément de $\Cp$. Supposons que $u$ soit un élément de $Prim(\Cp)$. Posons $\beta_{0}=0$. S'il existe un entier $i\in\llbracket1,k\rrbracket$ vérifiant $\beta_{i}\geq \beta_{i-1}+2$, alors, l'élément $u$ est nul.	
\end{prop}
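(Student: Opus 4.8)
The plan is to use the primitivity hypothesis in a very economical way: I only keep those coefficients of $\tilde{\De}(u)$ whose right tensor factor is a pure $x_{0}$-block, and I show that a single, well-chosen such coefficient already kills each $\mu_{\omega}$ separately. Set $\kappa=\alpha_{1}+\dots+\alpha_{k}$ and $N=\sum_{j}\alpha_{j}\beta_{j}$ (the common degree of all $\omega\in L_{\beta}$), and identify $\omega$ with $(0,\omega_{1},\dots,\omega_{\kappa})$. In the coproduct formula for $\Cp$, a left tensor factor of the form $(0,a_{1},\dots,a_{\kappa})$ of full length $\kappa$ can only arise from the summand $n=\kappa$ (all blocks selected, $i_{j}=j$, $k_{j}=a_{j}$), whose right factor is then forced to be the single entry $(b_{0})$ with $b_{0}=N-\sum_{m}a_{m}$. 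Reading off the coefficient gives, for every $a=(a_{1},\dots,a_{\kappa})\in(\N^{*})^{\kappa}$,
$$\big\langle \De(u),\,(0,a_{1},\dots,a_{\kappa})\ot(b_{0})\big\rangle=\sum_{\omega\in L_{\beta}}\mu_{\omega}\prod_{m=1}^{\kappa}\binom{\omega_{m}}{a_{m}}.$$
As soon as $b_{0}\ge 1$ this tensor appears neither in $u\ot 1$ (whose right factor is the unit $(0)$) nor in $1\ot u$ (whose left factor is the unit $(0)$, of length $0\neq\kappa\ge 1$); hence primitivity forces $\sum_{\omega}\mu_{\omega}\prod_{m}\binom{\omega_{m}}{a_{m}}=0$ for all $a\in(\N^{*})^{\kappa}$ with $\sum_{m}a_{m}\le N-1$.

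The key step is to diagonalize this linear system. Putting $\beta_{0}=0$ and writing $r$ for the rank ($r(\beta_{j})=j$), I would attach to each fixed $\omega$ the test vector $a(\omega)_{m}=\beta_{r(\omega_{m})-1}+1$. One checks immediately that $1\le a(\omega)_{m}\le\omega_{m}$, so $\prod_{m}\binom{\omega_{m}}{a(\omega)_{m}}\neq 0$. Moreover, for any other $\omega'\in L_{\beta}$ the product $\prod_{m}\binom{\omega'_{m}}{a(\omega)_{m}}$ is nonzero only if $\omega'_{m}>\beta_{r(\omega_{m})-1}$ for every $m$, i.e. $r(\omega'_{m})\ge r(\omega_{m})$ for all $m$. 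Since $\omega$ and $\omega'$ are rearrangements of the same multiset, their rank multisets coincide, so $\sum_{m}r(\omega'_{m})=\sum_{m}r(\omega_{m})=\sum_{j}j\alpha_{j}$; a termwise inequality with equal sums is an equality, whence $r(\omega'_{m})=r(\omega_{m})$ for all $m$ and $\omega'=\omega$. Thus the equation indexed by $a(\omega)$ collapses to $\mu_{\omega}\cdot(\text{nonzero integer})=0$, giving $\mu_{\omega}=0$.

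The only remaining point — and the real heart of the matter, the one place where the hypothesis enters — is to verify that $a(\omega)$ lies in the admissible range $\sum_{m}a(\omega)_{m}\le N-1$. A direct count gives $\sum_{m}a(\omega)_{m}=\kappa+\sum_{j=1}^{k}\alpha_{j}\beta_{j-1}$, independent of $\omega$, so that $N-\sum_{m}a(\omega)_{m}=\sum_{j=1}^{k}\alpha_{j}(\beta_{j}-\beta_{j-1}-1)$. Each summand is $\ge 0$, and the $j$-th one is $\ge 1$ precisely when $\beta_{j}\ge\beta_{j-1}+2$; hence a single index $i$ with $\beta_{i}\ge\beta_{i-1}+2$ already forces $N-\sum_{m}a(\omega)_{m}\ge\alpha_{i}\ge 1$, i.e. $b_{0}\ge 1$ and $\sum_{m}a(\omega)_{m}\le N-1$. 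This is exactly why the spacing condition is indispensable: with no gap the sum equals $N$, the test vector lands on the top-degree (unconstrained) stratum of the coproduct, and nonzero primitives genuinely exist. Running this argument over all $\omega\in L_{\beta}$ yields $\mu_{\omega}=0$ throughout, i.e. $u=0$.
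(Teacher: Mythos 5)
Your proof is correct, and it follows the same general strategy as the paper --- extract from the primitivity of $u$ the vanishing of the coefficient of a well-chosen tensor whose left factor has full length $\kappa$ and whose right factor is a pure block $(b_{0})$ with $b_{0}\geq1$ --- but the test functional and the uniqueness argument are genuinely different. The paper takes, for each $\omega$, the tensor obtained by decrementing by $1$ a single entry of $\omega$ equal to $\beta_{i}$ and tensoring with $(1)$ on the right; the gap hypothesis $\beta_{i}\geq\beta_{i-1}+2$ is used to guarantee that $\beta_{i}-1$ does not belong to $\{\beta_{1},\dots,\beta_{k}\}$, so the only $\omega'\in L_{\beta}$ that can produce this left factor is $\omega$ itself (the differing position is forced), and the coefficient collapses to $\beta_{i}\mu_{\omega}$. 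You instead lower \emph{every} entry $\omega_{m}=\beta_{j}$ down to $\beta_{j-1}+1$, prove uniqueness of the contributing $\omega'$ by the rank-sum rigidity argument ($r(\omega'_{m})\geq r(\omega_{m})$ termwise with equal sums forces equality), and use the gap hypothesis only to check that the right factor $(b_{0})$ with $b_{0}=\sum_{j}\alpha_{j}(\beta_{j}-\beta_{j-1}-1)\geq\alpha_{i}\geq1$ is not the unit. The paper's choice is more economical (one decrement, one binomial coefficient), while yours isolates more transparently where the spacing condition enters --- via the identity $N-\sum_{m}a(\omega)_{m}=\sum_{j}\alpha_{j}(\beta_{j}-\beta_{j-1}-1)$, which also explains why the statement must fail when consecutive $\beta_{j}$ differ by exactly $1$. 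Both arguments are complete; all the verifications in yours (admissibility $1\leq a(\omega)_{m}\leq\omega_{m}$, non-appearance of the tensor in $u\ot1$ and $1\ot u$, and the restriction of the coproduct to the stratum $n=\kappa$) check out against the explicit coproduct formula for $\Cp$.
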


\begin{proof}
	Soient $k\in\N^{*}$, $(\alpha_{1},\dots,\alpha_{k})\in(\N^{*})^{k}$, $\beta=(0,\underbrace{\beta_{1},\dots ,\beta_{1}}_{\alpha_{1} \text{ fois}}<\dots<\underbrace{\beta_{k},\dots ,\beta_{k}}_{\alpha_{k} \text{ fois}})$ et $u=\sum\limits_{\omega\in L_{\beta}}\mu_{\omega}\omega$ des éléments de $\Cp$. Posons $\beta_{0}=0$. Supposons que $u$ soit primitif et qu'il existe $i\in\llbracket1,k\rrbracket$ tel que $\beta_{i}\geq \beta_{i-1}+2$. Ecrivons $\beta$ sous la forme $\beta=(0,\gamma_{1},\dots,\gamma_{\alpha_{1}+\dots+\alpha_{k}})$ avec  $\gamma_{j}=\beta_{l}$ où $l$ est l'unique entier tel que $j\in\{(\alpha_{1}+\dots+\alpha_{l-1}+1),\dots, (\alpha_{1}+\dots+\alpha_{l})\}$. Posons $M=\max\big\{l\in\{1,\dots, (\alpha_{1}+\dots+\alpha_{k})\}, \gamma_{l}=\beta_{i} \big\}$. 
	
	Soit $\omega\in L_{\beta}$. Il existe $\sigma\in\Sn{\alpha_{1}+\dots+\alpha_{k}}$ tel que $\omega=(0,\gamma_{\sigma(1)},\dots,\gamma_{\sigma(\alpha_{1}+\dots+\alpha_{k})})$.	Comme $u$ est primitif, le coefficient $c_{\omega}$ de $(0,\gamma_{\sigma(1)},\dots,\gamma_{M}-1,\dots,\gamma_{\sigma(\alpha_{1}+\dots+\alpha_{k})})\ot (1)$ dans $\De(u)$ est nul. Or $c_{\omega}=\beta_{i}\mu_{\omega}$. Donc $\mu_{\omega}=0$. Comme ceci est vrai pour tout $\omega\in L_{\beta}$, on obtient que $u=0$.  
\end{proof}

\begin{prop}
	Soit $n$ un entier naturel non nul, $(\alpha_{1},\dots,\alpha_{n})$ un $n$-uplet d'entiers naturels non nuls et $(\beta_{1},\dots,\beta_{n})$ un $n$-uplet d'entiers naturels non nuls deux à deux distincts. On pose $\theta=\displaystyle\sum_{i=1}^{n}\alpha_{i}$ et on appelle $\gamma$ le $\theta$-uplet défini par $\gamma=(\gamma_{1},\dots,\gamma_{\theta})=(\underbrace{\beta_{1},\dots,\beta_{1}}_{\alpha_{1} \text{ fois}},\dots,\underbrace{\beta_{n},\dots,\beta_{n}}_{\alpha_{n} \text{ fois}})$. 
	\begin{enumerate}
		\item\label{pointun} Si $n\geq3$ alors les éléments $\Gamma_{\gamma}$ et $\Gamma_{\Lambda_{\gamma}}$ ne sont pas des éléments primitifs de $\Cp$.
		\item\label{pointdeux} Si $n=2$ et $(\beta_{1},\beta_{2})\notin\left\{(1,2),(2,1)\right\}$ alors les éléments $\Gamma_{\gamma}$ et $\Gamma_{\Lambda_{\gamma}}$ ne sont pas des éléments primitifs de $\Cp$.
		\item\label{pointtrois} On suppose $n=2$. Si $\big((\beta_{1},\beta_{2})=(1,2)\text{ et }\alpha_{2}\geq2\big)$ ou $\big((\beta_{1},\beta_{2})=(2,1)\text{ et }\alpha_{1}\geq2\big)$ alors les éléments $\Gamma_{\gamma}$ et $\Gamma_{\Lambda_{\gamma}}$ ne sont pas des éléments primitifs de $\Cp$.
	\end{enumerate}
\end{prop}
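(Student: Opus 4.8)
The three assertions follow a common pattern, so I would first reduce them to a single coproduct computation and then split according to whether the underlying value set has a gap.

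\textbf{Reduction to the ``reduction part''.} Let $\phi\colon\ISPW\to C=\tens{(0,n),n\in\N^{*}}$ be the linear bijection sending $\underbrace{x_{1}\dots x_{1}}_{c_{1}}\dots\underbrace{x_{m}\dots x_{m}}_{c_{m}}$ to $(0,c_{1},\dots,c_{m})$. Since $\underbrace{x_{1}\dots x_{1}}_{n}$ and $(0,n)$ are primitive generators of free Hopf algebras, $\phi$ is a Hopf algebra isomorphism of $\ISPW$ onto the subalgebra $C\subseteq\Cp$; note however that $C$ is \emph{not} a subcoalgebra of $\Cp$. By construction $\Gamma_{\gamma}=\phi(\Pn{\gamma})$ and $\Gamma_{\Lambda_{\gamma}}=\phi(\Pn{\Lambda_{\gamma}})$, so since $\phi$ is injective and $\Pn{\gamma},\Pn{\Lambda_{\gamma}}\neq0$, both $\Gamma_{\gamma}$ and $\Gamma_{\Lambda_{\gamma}}$ are nonzero. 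Decompose the restriction of the $\Cp$-coproduct to $C$ as $\De_{C}+R$, where $\De_{C}$ is the reduction-free coproduct of $C\cong\ISPW$ and $R$ collects the terms in which at least one letter is lowered (so the right tensor factor carries a positive $x_{0}$-count, and no $\ot1$ or $1\ot$ term occurs). As $\Pn{\gamma}$ is primitive in $\ISPW$ we get $\De_{C}(\Gamma_{\gamma})=\Gamma_{\gamma}\ot1+1\ot\Gamma_{\gamma}$, hence $\tilde\De_{\Cp}(\Gamma_{\gamma})=R(\Gamma_{\gamma})$. Thus $\Gamma_{\gamma}$ is primitive in $\Cp$ if and only if $R(\Gamma_{\gamma})=0$, and likewise for $\Gamma_{\Lambda_{\gamma}}$.

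\textbf{Cases with a gap.} Order the distinct values as $0=\beta_{(0)}<\beta_{(1)}<\dots<\beta_{(n)}$ and suppose $\beta_{(i)}\ge\beta_{(i-1)}+2$ for some $i$. Proposition \ref{conditionEcartMax} then forces every primitive element of $\Cp$ supported on $L_{\beta}$ to vanish; since $\Gamma_{\gamma},\Gamma_{\Lambda_{\gamma}}\neq0$, neither is primitive. This settles statement \ref{pointdeux}: for $n=2$ with $\{\beta_{1},\beta_{2}\}\neq\{1,2\}$ one has $\beta_{(1)}\ge2$ or $\beta_{(2)}\ge\beta_{(1)}+2$. It also settles the part of statement \ref{pointun} with $\{\beta_{1},\dots,\beta_{n}\}\neq\{1,\dots,n\}$, because $\{1,\dots,n\}$ is the only gapless set of $n$ distinct positive integers.

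\textbf{The gapless cases.} There remain statement \ref{pointtrois} and the sub-case $\{\beta_{i}\}=\{1,\dots,n\}$ of statement \ref{pointun}, where $\beta_{(j)}=j$ and Proposition \ref{conditionEcartMax} says nothing. Here I would exhibit directly one nonzero coefficient of $R(\Gamma_{\gamma})$. Writing $\Gamma_{\gamma}=\sum_{\omega\in L_{\beta}}\mu_{\omega}\omega$ and reading off the coproduct of $\Cp$ (with $\alpha_{0}=0$, so the left factor always begins with $0$), the coefficient of a tensor $(0,\vec d)\ot(1)$, obtained by lowering one part of value $\ge2$ by one, equals
\[
\sum_{j\,:\,\vec d+e_{j}\in L_{\beta}}(d_{j}+1)\,\mu_{(0,\vec d+e_{j})}.
\]
The plan is to choose $\vec d$ so that this alternating sum is nonzero; the hypotheses $\alpha_{2}\ge2$ (resp. $\alpha_{1}\ge2$) in statement \ref{pointtrois}, and $n\ge3$ in statement \ref{pointun}, are exactly what provide enough admissible $\vec d+e_{j}\in L_{\beta}$ to prevent complete cancellation. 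For instance, with $(\beta_{1},\beta_{2})=(1,2)$ and $\Gamma_{(1,1,2,2)}=(0,1,1,2,2)-2(0,1,2,1,2)+2(0,2,1,2,1)-(0,2,2,1,1)$, the coefficient of $(0,2,1,1,1)\ot(1)$ is $2\left((-1)+2\right)=2\neq0$.

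\textbf{Main obstacle.} The genuine work is this last step: proving that the displayed sum is nonzero for every admissible $(\alpha_{1},\dots,\alpha_{n})$ and for both families. This requires either a closed form for the coefficients $\mu_{\omega}$ of $\Gamma_{\gamma}$ and $\Gamma_{\Lambda_{\gamma}}$, or a combinatorial non-cancellation argument, most naturally organized along the recursion $\Gamma_{\gamma_{i+1}}=(0,\beta_{1})\ast\Gamma_{\gamma_{i}}-\Gamma_{\gamma_{i}}\ast(0,\beta_{1})$ inherited from $\Pn{\gamma_{i+1}}=\{x_{1}^{\beta_{1}},\Pn{\gamma_{i}}\}$ through $\phi$. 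For $\Gamma_{\Lambda_{\gamma}}$ one can in several ranges invoke the proportionality with $\Gamma_{\gamma}$ recorded in the remark following Proposition \ref{prim_Lambda}, but where they are not proportional a parallel evaluation of the same reduction coefficient is needed.
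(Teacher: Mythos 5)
Your proposal follows essentially the same route as the paper: the cases where the value set $\{0,\beta_{1},\dots,\beta_{n}\}$ has a gap are disposed of by Proposition \ref{conditionEcartMax} (which requires, as you note, first checking that $\Gamma_{\gamma}$ and $\Gamma_{\Lambda_{\gamma}}$ are nonzero), and the remaining gapless cases (point \ref{pointtrois} and the permutation sub-case of point \ref{pointun}) are handled by exhibiting a tensor with right factor $(1)$ whose multiplicity in the coproduct is nonzero. The non-cancellation step you flag as the main obstacle is indeed the crux, but the paper's own proof is no more explicit there --- it merely asserts that such tensors can be exhibited --- so your argument, which at least gives the general coefficient formula and a verified instance, is no less complete than the published one.
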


\begin{proof} 
	Le point \ref{pointdeux} est une conséquence directe de la proposition \ref{conditionEcartMax}. Pour le point \ref{pointun}, s'il n'existe aucune permutation $\varsigma\in\Sn{n}$ telle que $(\beta_{1},\dots,\beta_{n})=(\varsigma(1),\dots,\varsigma(n))$, il s'agit alors d'une conséquence directe de la proposition \ref{conditionEcartMax}. Si, au contraire, il existe une permutation $\varsigma\in\Sn{n}$ telle que $(\beta_{1},\dots,\beta_{n})=(\varsigma(1),\dots,\varsigma(n))$, le point \ref{pointun} peut se montrer de la même façon que le point \ref{pointtrois} \emph{i.e.} en explicitant des tenseurs particuliers  de $\Cp\otimes\Cp$ dont la multiplicité est non nulle dans $\De(\Gamma_{\gamma})$ (respectivement $\De(\Gamma_{\Lambda_{\gamma}})$). 
\end{proof}

\Cex Les éléments suivants ne sont pas primitifs dans $\Cp$.
\begin{align*}
\Gamma_{(1,3,5)}&=(0,1,3,5)-2(0,3,1,5)+(0,3,5,1)+(0,1,5,3)-2(0,5,1,3)+(0,5,3,1),\\
\Gamma_{(1,1,2,3)}&=(0,1,1,2,3)-2(0,1,2,1,3)+2(0,2,1,3,1)-(0,2,3,1,1)\\
&+(0,1,1,3,2)-2(0,1,3,1,2)+2(0,3,1,2,1)-(0,3,2,1,1),\\
\Gamma_{\Lambda_{(1,1,1,2,2)}}&=3(0,1,1,1,2,2)-7(0,1,1,2,1,2)+3(0,1,2,1,1,2)-2(0,2,1,1,1,2)\\
&-2(0,1,1,2,2,1)+8(0,1,2,1,2,1)+3(0,2,1,1,2,1)-2(0,1,2,2,1,1)\\
&-7(0,2,1,2,1,1)+3(0,2,2,1,1,1).
\end{align*}

\begin{prop}\label{primCp}
	Soit $n\in\N^{*}$. Considérons l'élément	
	$$\Gamma_{(2,1,\dots,1)}=\sum_{k=0}^{n}(-1)^{k}\binom{n}{k}(0,\underbrace{1,\dots,1}_{k \text{ fois}},2,\underbrace{1,\dots,1}_{n-k \text{ fois}}).$$
	La famille $(\Gamma_{(2,1,\dots,1)\in(\N)^{n+1}})_{n\in\N^{*}}$ est une famille d'éléments primitifs de $\Cp$.
\end{prop}

\begin{proof}
	Soit $n\in\N^{*}$. Les seuls tenseurs pouvant potentiellement intervenir dans le coproduit réduit de $\Gamma_{(2,1,\dots,1)}$ sont ceux du type 
	\begin{align*}
		(0,\underbrace{1,\dots,1}_{s \text{ fois}})\ot (0,\underbrace{1,\dots,1}_{u \text{ fois}},2,\underbrace{1,\dots,1}_{v \text{ fois}})& \text{ avec } (s,u,v)\in\N^{*}\times(\N)^{2} \text{ et } s+u+v=n,\\
		(0,\underbrace{1,\dots,1}_{u \text{ fois}},2,\underbrace{1,\dots,1}_{v \text{ fois}})\ot (0,\underbrace{1,\dots,1}_{s \text{ fois}})& \text{ avec } (s,u,v)\in\N^{*}\times(\N)^{2} \text{ et } s+u+v=n,\\
		(0,\underbrace{1,\dots,1}_{s \text{ fois}})\ot (1,\underbrace{1,\dots,1}_{n+1-s \text{ fois}})& \text{ avec } 1\leq s\leq n+1.
	\end{align*}
	D'après les propositions \ref{lienCpISPW} (page \pageref{lienCpISPW}) et \ref{prim} (page \pageref{prim}), on sait que les coefficients des tenseurs  
	$(0,\underbrace{1,\dots,1}_{s \text{ fois}})\ot (0,\underbrace{1,\dots,1}_{u \text{ fois}},2,\underbrace{1,\dots,1}_{v \text{ fois}})  \text{ et } (0,\underbrace{1,\dots,1}_{u \text{ fois}},2,\underbrace{1,\dots,1}_{v \text{ fois}})\ot(0,\underbrace{1,\dots,1}_{s \text{ fois}})$ sont nuls. Il ne reste donc qu'à déterminer le coefficient $c_{s}$ du tenseur $t_{s}=(0,\underbrace{1,\dots,1}_{s \text{ fois}})\ot (1,\underbrace{1,\dots,1}_{n+1-s \text{ fois}})$ pour tout entier $s$ de $\llbracket1,n+1\rrbracket$. Soient $k\in\llbracket0,n\rrbracket$, $u\in\llbracket0,s\rrbracket$. On s'intéresse à l'élément $\omega_{k}=(0,\underbrace{1,\dots,1}_{k \text{ fois}},2,\underbrace{1,\dots,1}_{n-k \text{ fois}})$. Il est possible, à partir de cet élément, d'obtenir le tenseur $t_{s}$. Il suffit d'extraire, dans $\omega_{k}$, un nombre $u$ de positions à gauche de la $k+1$-ième et $s-1-u$ positions à droite. Cela ajoute des conditions supplémentaires sur $u$, à savoir $s+k-n-1\leq u \leq k$. On peut maintenant calculer $c_{s}$. 
	On a: $c_{s}=\displaystyle\sum_{k=0}^{n}(-1)^{k}\binom{n}{k}\sum_{u=\max(0,s+k-n-1)}^{\min(s-1,k)}2\binom{k}{u}\binom{n-k}{s-1-u}=0$. 
	L'élément $\Gamma_{(2,1,\dots,1)}$ est donc primitif dans $\Cp$.
\end{proof}
\Ex{}
\begin{align*}
	\Gamma_{(2,1,1)}&=(0,2,1,1)-2(0,1,2,1)+(0,1,1,2),\\
	\Gamma_{(2,1,1,1)}&=(0,2,1,1,1)-3(0,1,2,1,1)+3(0,1,1,2,1)-(0,1,1,1,2).
\end{align*}
\begin{cor}Les propositions \ref{lienCpISPW} à \ref{primCp} permettent d'établir que: \begin{align*}
		\Prim{\Cp}{4}&=Vect(\Gamma_{(1,1,2)}),& \Prim{\Cp}{5}&=Vect(\Gamma_{(1,1,1,2)}),\\ \Prim{\Cp}{6}&=Vect(\Gamma_{(1,1,1,1,2)}).&&
	\end{align*}
\end{cor}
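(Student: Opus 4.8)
The plan is to trap $\Prim{\Cp}{n}$ between the primitives already known from $\ISPW$ and a single explicit line. First I would record that, for $n\geq 2$, Proposition \ref{primCPzero} forces every primitive of $(\Cp)_n$ to be supported on extended compositions of the form $(0,\alpha_1,\dots,\alpha_p)$, and that Proposition \ref{lienCpISPW} then reads such an element as a primitive of $\ISPW$. Hence $\Prim{\Cp}{n}\subseteq\Prim{\ISPW}{n}$, an inclusion of the unknown space into one whose dimension is already known to be $3$, $6$ and $9$ for $n=4,5,6$. This alone makes the whole question finite-dimensional and explicit.

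Next I would exploit the partition grading of Proposition \ref{formebaseparticuliere}, writing $\Prim{\ISPW}{n}=\bigoplus_{\lambda\vdash n}P_\lambda$, where $P_\lambda$ gathers the primitives supported on words whose block-multiset is $\lambda$. For $n\leq 6$ the families of Propositions \ref{prim} and \ref{prim_Lambda}, together with their relabellings by the distinct values $\beta_i$, give an explicit basis, each basis vector living inside a single $P_\lambda$. A candidate $u\in\Prim{\Cp}{n}$ is then a combination $\sum_\lambda c_\lambda\,p_\lambda$, and the task becomes to decide which coefficients $c_\lambda$ may survive.

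The core of the argument is to kill every $c_\lambda$ except the one attached to $\lambda=\{2,1^{n-2}\}$. The $\ISPW$-coproduct only certifies $\ISPW$-primitivity, so the genuinely new constraints come from the tensors of $\tilde\Delta_{\Cp}(u)$ carrying a letter $x_0$ on the right, i.e. those of shape $A\otimes B$ with $B=(b_0,\dots)$ and $b_0\geq 1$. I would organise these by ordering partitions by their largest part in decreasing order and, for each $\lambda$, produce a signature tensor $A\otimes B$ whose right factor has a prescribed block structure (first $(1)$, then $(1,1)$, then $(1,2)$, and so on) so that, once all partitions with a strictly larger part have been eliminated, only $p_\lambda$ can feed that tensor. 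The vanishing of its coefficient then forces $c_\lambda=0$ exactly when $\lambda$ is gappy (Proposition \ref{conditionEcartMax}) or non-gappy but of an excluded shape (the previous proposition: the cases $n\geq 3$ distinct values, and $(\beta_1,\beta_2)=(1,2)$ with $\alpha_2\geq2$). The hard part will be precisely this triangular elimination: the one-box reductions of two distinct partitions can land on the same multiset (for instance $\{1,1,3\}$ and $\{1,2,2\}$ both reduce to $\{1,1,2\}$ in degree $5$), so the single-multiset lemmas do not apply verbatim and the right factor of each signature tensor must carry enough block structure to separate the partitions one at a time. For $n\leq 6$ this is a finite, if delicate, check.

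Finally, the only partition of $n\in\{4,5,6\}$ that is neither gappy nor of an excluded shape while still supporting a nonzero $\ISPW$-primitive is $\{2,1^{n-2}\}$, and $\dim P_{\{2,1^{n-2}\}}=1$. Since $\Gamma_{(2,1,\dots,1)}$ is a nonzero primitive of $\Cp$ by Proposition \ref{primCp} and spans that line, I would conclude $\Prim{\Cp}{n}=Vect(\Gamma_{(2,1,\dots,1)})$ for $n=4,5,6$, which is exactly the content of the corollary.
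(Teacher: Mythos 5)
Votre démarche est correcte et suit exactement le chemin que le corollaire sous-entend : l'article ne donne d'ailleurs aucune démonstration explicite et se contente d'invoquer les propositions \ref{lienCpISPW} à \ref{primCp}. La chaîne d'inclusions que vous établissez (propositions \ref{primCPzero} puis \ref{lienCpISPW} pour obtenir $\Prim{\Cp}{n}\subseteq\Prim{\ISPW}{n}$, puis élimination partition par partition, puis proposition \ref{primCp} pour la minoration) est bien l'argument attendu. Le point que vous soulevez sur la collision des réductions d'une case --- par exemple, en degré $5$, le tenseur $(0,1,1,2)\ot(1)$ reçoit à la fois $3\mu_{(0,1,1,3)}$ et $2\mu_{(0,1,2,2)}$ --- est réel : les propositions \ref{conditionEcartMax} et la proposition non numérotée qui la suit ne traitent littéralement que des éléments à support dans un seul $L_{\beta}$ ou des vecteurs nommés $\Gamma_{\gamma}$, $\Gamma_{\Lambda_{\gamma}}$, et n'excluent donc pas a priori une combinaison linéaire trans-partitions dont les défauts de primitivité se compenseraient. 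Votre élimination triangulaire, qui sépare les partitions au moyen de tenseurs dont le facteur droit porte une structure en blocs prescrite (par exemple $(0,1,1)\ot(1,2)$ pour isoler $\mu_{(0,1,2,2)}$), comble correctement cette lacune et constitue en fait une justification plus complète que celle, implicite, de l'article ; la vérification reste finie en degrés $4$, $5$ et $6$ puisque $\dim\Prim{\ISPW}{n}$ y vaut respectivement $3$, $6$ et $9$.
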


\Rq{1} On connaît également l'espace vectoriel $\Prim{\Cp}{7}$. En effet, l'élément $u$ défini par $u=(0,3,2,1,1)-2(0,3,1,2,1)+(0,3,1,1,2)-(0,2,1,1,3)+2(0,1,2,1,3)-(0,1,1,2,3)$ n'est pas un élément primitif de $\Cp$. On obtient alors que $\Prim{\Cp}{7}=Vect(\Gamma_{(2,1,1,1,1,1)})$ grâce aux propositions précédentes.

\subsubsection{Algèbre de Hopf duale $\dual{\Cp}$}
On s'intéresse à $\dual{\Cp}$ le dual de Hopf gradué de $\Cp$. Grâce à l'isomorphisme $\Pi$ entre $\WMat/\Idl{J}$ et $\Cp$, on peut également voir  $\dual{\Cp}$ comme  $\dual{(\WMat/\Idl{J})}$ donc comme une sous-algèbre de Hopf de $\dual{\WMat}$.  
On note  $(Z_{\alpha})_{\alpha\in\Cp }$ la base duale des compositions étendues.

Décrivons les opérations de l'algèbre de Hopf $\dual{\Cp}$. Commençons par la structure la plus simple: celle de cogèbre.
\begin{prop}
	Le coproduit de la cogèbre $\dual{\Cp}$ est défini par: 
	$$\De:\left\{\begin{array}{rcl} \dual{\Cp}&\longrightarrow & \dual{\Cp}\ot\dual{\Cp} \\ Z_{(\alpha_{0},\dots,\alpha_{s})} &\longrightarrow &\sum\limits_{a=0}^{\alpha_{0}}\sum\limits_{u=0}^{s}Z_{(a,\alpha_{1},\dots,\alpha_{u})}\ot Z_{(\alpha_{0}-a,\alpha_{u+1}\dots,\alpha_{s})}\end{array}\right.$$	
\end{prop}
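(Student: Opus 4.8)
La d\'emarche consiste \`a dualiser le produit $\ast$ de $\Cp$, exactement comme pour les coproduits de $\dual{\WMat}$ et $\dual{\ISPW}$ d\'ej\`a obtenus. Le coproduit de $\dual{\Cp}$ \'etant la transpos\'ee du produit, je commencerais par \'ecrire, pour une composition \'etendue $\gamma$ et un couple $(w_1,w_2)$ de compositions \'etendues de degr\'es ad\'equats,
$$\De(Z_\gamma)(w_1\ot w_2)=Z_\gamma(w_1\ast w_2),$$
quantit\'e qui vaut $1$ lorsque $w_1\ast w_2=\gamma$ et $0$ sinon. Tout revient donc \`a recenser les couples $(w_1,w_2)$ dont le produit redonne $\gamma$.

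Ensuite, j'utiliserais la formule explicite du produit de $\Cp$: en posant $w_1=(a,\alpha'_1,\dots,\alpha'_p)$ et $w_2=(b,\beta'_1,\dots,\beta'_q)$, on a $w_1\ast w_2=(a+b,\alpha'_1,\dots,\alpha'_p,\beta'_1,\dots,\beta'_q)$. Identifier cette composition avec $\gamma=(\alpha_0,\dots,\alpha_s)$ fournit deux conditions: d'une part $a+b=\alpha_0$, que l'on r\'esout en laissant $a$ varier dans $\{0,\dots,\alpha_0\}$ et en posant $b=\alpha_0-a$; d'autre part l'\'egalit\'e entre la concat\'enation $(\alpha'_1,\dots,\alpha'_p,\beta'_1,\dots,\beta'_q)$ et $(\alpha_1,\dots,\alpha_s)$.

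Le point sur lequel je porterais l'attention, bien qu'il soit \'el\'ementaire, est que cette seconde condition d\'etermine de mani\`ere unique un d\'ecoupage. Comme les entr\'ees $\alpha_i$ (pour $i\geq1$) sont non nulles, aucun chevauchement n'est possible: la concat\'enation se factorise en un pr\'efixe $(\alpha_1,\dots,\alpha_u)$, qui est la queue de $w_1$, et un suffixe $(\alpha_{u+1},\dots,\alpha_s)$, qui est la queue de $w_2$, pour un unique entier $u\in\{0,\dots,s\}$. Les couples contribuant sont donc exactement $w_1=(a,\alpha_1,\dots,\alpha_u)$ et $w_2=(\alpha_0-a,\alpha_{u+1},\dots,\alpha_s)$.

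Il ne resterait alors qu'\`a sommer sur les deux param\`etres ind\'ependants $a\in\{0,\dots,\alpha_0\}$ et $u\in\{0,\dots,s\}$ pour obtenir
$$\De(Z_{(\alpha_0,\dots,\alpha_s)})=\sum_{a=0}^{\alpha_0}\sum_{u=0}^s Z_{(a,\alpha_1,\dots,\alpha_u)}\ot Z_{(\alpha_0-a,\alpha_{u+1},\dots,\alpha_s)},$$
soit la formule annonc\'ee.
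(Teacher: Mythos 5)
Votre d\'emonstration est correcte et suit essentiellement la m\^eme d\'emarche que celle du papier : on \'evalue $\De(Z_{\alpha})$ sur un tenseur de compositions \'etendues, on applique la formule du produit de $\Cp$ (somme des composantes d'indice $0$ et concat\'enation des autres), puis on recense les couples contribuant, index\'es par $a\in\{0,\dots,\alpha_{0}\}$ et $u\in\{0,\dots,s\}$. La seule diff\'erence est cosm\'etique : le papier \'ecrit directement le produit de deltas de Kronecker l\`a o\`u vous explicitez l'unicit\'e du d\'ecoupage de la concat\'enation.
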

\begin{proof}
	Soient $\alpha=(\alpha_{0},\alpha_{1},\dots,\alpha_{s}), \nu=(\nu_{0},\nu_{1},\dots,\nu_{p}), \eta=(\eta_{0},\eta_{1},\dots,\eta_{k}) \in \Cp$ des compositions étendues. On a :
	\begin{align*}
	\Delta(Z_{\alpha})(\nu\otimes\eta)=& Z_{\alpha}(\nu_{0}+\eta_{0},\nu_{1},\dots \nu_{p},\eta_{1}\dots \eta_{k})\\
	=&\delta_{\alpha_{0},\nu_{0}+\eta_{0}}\delta_{\alpha_{1},\nu_{1}}\dots\delta_{\alpha_{p},\nu_{p}}\delta_{\alpha_{p+1},\eta_{1}}\dots\delta_{\alpha_{s},\eta_{k}}\\
	=&\sum\limits_{a=0}^{\alpha_{0}}\sum\limits_{u=0}^{s}\left(Z_{(a,\alpha_{1},\dots,\alpha_{u})}\ot Z_{(\alpha_{0}-a,\alpha_{u+1}\dots,\alpha_{s})}\right)(\nu\ot \eta)
	\end{align*}
	Donc $$\Delta(Z_{(\alpha_{0},\dots,\alpha_{s})})=\sum_{a=0}^{\alpha_{0}}\sum_{u=0}^{s}Z_{(a,\alpha_{1},\dots,\alpha_{u})}\ot Z_{(\alpha_{0}-a,\alpha_{u+1}\dots,\alpha_{s})}.$$
\end{proof}
\Ex{} Considérons les compositions étendues $(0,2,1,1,2)$ et $(3,2,2,4)$. On détermine leur coproduit réduit. On a alors:
\begin{align*}
	\tilde{\De}(Z_{(0,2,1,1,2)})=&Z_{(0,2)}\ot Z_{(0,1,1,2)}+ Z_{(0,2,1)}\ot Z_{(0,1,2)}+ Z_{(0,2,1,1)}\ot Z_{(0,2)},\\
	\tilde{\De}(Z_{(1,2,3)})=&Z_{(0,2)}\ot Z_{(1,3)} +Z_{(0,2,3)}\ot Z_{(1)} + Z_{(1)}\ot Z_{(0,2,3)} + Z_{(1,2)}\ot Z_{(0,3)}.
\end{align*}
	
Intéressons-nous maintenant à la structure d'algèbre de $\dual{\Cp}$.
\begin{prop}
	Le produit de l'algèbre $\dual{\Cp}$ est défini par: 
	$$m:\left\{\begin{array}{rcl} \dual{\Cp}\ot\dual{\Cp} &\longrightarrow & \dual{\Cp}\\ Z_{(\alpha_{0},\dots,\alpha_{p})}\ot Z_{(\beta_{0},\dots,\beta_{q})} &\longrightarrow &\sum\limits_{\mu=0}^{\beta_{0}}\sum\limits_{\gamma=(\gamma_{1},\dots,\gamma_{p})\in K_{p,\beta_{0}-\mu}}c_{\mu,\gamma}Z_{(\alpha_{0}+\mu,(\alpha_{1}+\gamma_{1},\dots,\alpha_{p}+\gamma_{p})\shuffle (\beta_{1},\dots,\beta_{q}) )}\end{array}\right.$$	
	où
	\begin{align*}
	K_{p,m}=&\bigg\{(\gamma_{1},\dots,\gamma_{p})\in\N^{p}, \gamma_{1}+\dots+\gamma_{p}=m  \bigg \} \text{ pour } m\in\N,\\
	c_{\mu,\gamma}=&\binom{\alpha_{0}+\mu}{\alpha_{0}}\binom{\alpha_{1}+\gamma_{1}}{\alpha_{1}}\dots\binom{\alpha_{p}+\gamma_{p}}{\alpha_{p}},\\
	Z_{(\alpha_{0}+\mu,(\alpha_{1}+\gamma_{1},\dots,\alpha_{p}+\gamma_{p})\shuffle (\beta_{1},\dots,\beta_{q}) )}=& \sum\limits_{\tau\in Bat(p,q)}Z_{(\alpha_{0}+\mu,\varphi_{\tau^{-1}(1)},\dots,\varphi_{\tau^{-1}(p)},\varphi_{\tau^{-1}(p+1)},\dots,\varphi_{\tau^{-1}(p+q)})},
	\end{align*}
	avec
	$$\varphi_{i}=\begin{cases}
	\alpha_{i}+\gamma_{i} & \mbox{ si } i\in\llbracket1,p\rrbracket, \\
	\beta_{i-p} & \mbox{ si } i\in\llbracket p+1,p+q\rrbracket. 
	\end{cases}$$
\end{prop}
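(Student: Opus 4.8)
The plan is to obtain the product of $\dual{\Cp}$ exactly as the earlier dual products were obtained, namely by transposing the coproduct of $\Cp$. By definition of the graded dual, the product is characterised by $\langle Z_{\alpha}Z_{\beta},\omega\rangle=\langle Z_{\alpha}\ot Z_{\beta},\De(\omega)\rangle$ for every extended composition $\omega=(\omega_{0},\omega_{1},\dots,\omega_{r})$. So it suffices to read off, from the explicit coproduct of $\Cp$ recalled above, the coefficient of the tensor $\alpha\ot\beta$ in $\De(\omega)$, and then to repackage the resulting family of $\omega$'s as the announced sum.

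First I would extract that coefficient. Writing $\alpha=(\alpha_{0},\dots,\alpha_{p})$ and $\beta=(\beta_{0},\dots,\beta_{q})$, the generic summand of $\De(\omega)$ has left tensor factor $(a,k_{i_{1}},\dots,k_{i_{n}})$ and right factor $(\omega_{0}-a+\sum_{j}(\omega_{i_{j}}-k_{i_{j}}),\omega_{u_{1}},\dots,\omega_{u_{r-n}})$. Imposing that the left factor equal $\alpha$ forces $a=\alpha_{0}$, $n=p$ and $k_{i_{j}}=\alpha_{j}$; imposing that the right factor equal $\beta$ forces $r=p+q$, $\omega_{u_{l}}=\beta_{l}$ for $1\le l\le q$, and $\omega_{0}-\alpha_{0}+\sum_{j=1}^{p}(\omega_{i_{j}}-\alpha_{j})=\beta_{0}$. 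For each admissible subset $I=\{i_{1}<\dots<i_{p}\}$ the attached binomial weight is $\binom{\omega_{0}}{\alpha_{0}}\prod_{j=1}^{p}\binom{\omega_{i_{j}}}{\alpha_{j}}$, the constraints $\alpha_{j}\le\omega_{i_{j}}$ being automatically enforced.

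Next I would perform the change of variables $\mu=\omega_{0}-\alpha_{0}$ and $\gamma_{j}=\omega_{i_{j}}-\alpha_{j}$, all nonnegative. The relation above becomes $\mu+\gamma_{1}+\dots+\gamma_{p}=\beta_{0}$ with $0\le\mu\le\beta_{0}$, i.e. $(\gamma_{1},\dots,\gamma_{p})\in K_{p,\beta_{0}-\mu}$, and the weight becomes $\binom{\alpha_{0}+\mu}{\alpha_{0}}\prod_{j=1}^{p}\binom{\alpha_{j}+\gamma_{j}}{\alpha_{j}}=c_{\mu,\gamma}$. The subset $I$ together with its complement $U$ records precisely an interleaving $\tau\in Bat(p,q)$ of the block $(\alpha_{1}+\gamma_{1},\dots,\alpha_{p}+\gamma_{p})$ with the block $(\beta_{1},\dots,\beta_{q})$, so that $(\omega_{1},\dots,\omega_{p+q})$ ranges exactly over the terms of the shuffle $(\alpha_{1}+\gamma_{1},\dots,\alpha_{p}+\gamma_{p})\shuffle(\beta_{1},\dots,\beta_{q})$ while $\omega_{0}=\alpha_{0}+\mu$. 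Collecting the coefficients $\langle Z_{\alpha}Z_{\beta},\omega\rangle$ over all $\omega$ should then reassemble, term by term, into the stated triple sum over $\mu$, $\gamma$ and $\tau$.

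The delicate point, and the step I expect to demand the most care, is this last bijection: I must verify that the admissible subsets $I$ indexing the nonzero contributions to $\langle Z_{\alpha}\ot Z_{\beta},\De(\omega)\rangle$ correspond bijectively to the triples $(\mu,\gamma,\tau)$ on the right-hand side that output $Z_{\omega}$, and that when several of the $\omega_{i}$ coincide, so that distinct shuffles $\tau$ yield the same $\omega$, the multiplicities accumulate identically on both sides. Granting this matching of indices and of the weights $c_{\mu,\gamma}$, the two expressions coincide and the formula follows; everything else is the routine binomial bookkeeping already used for $\dual{\ISPW}$.
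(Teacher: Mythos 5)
Votre d�marche est correcte et co�ncide essentiellement avec celle du papier : on transpose le coproduit de $\Cp$ en calculant $(Z_{\alpha}Z_{\beta})(\omega)=(Z_{\alpha}\ot Z_{\beta})\circ\De(\omega)$, puis on identifie les conditions de non-annulation ($a=\alpha_{0}$, $n=p$, $k_{i_{j}}=\alpha_{j}$, $\omega_{u_{l}}=\beta_{l}$ et la contrainte de somme) avant de r�indexer par $(\mu,\gamma,\tau)$. Votre changement de variables explicite $\mu=\omega_{0}-\alpha_{0}$, $\gamma_{j}=\omega_{i_{j}}-\alpha_{j}$ rend simplement plus transparent le r�assemblage final que le papier laisse implicite.
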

\begin{proof}
	Soient $\alpha=(\alpha_{0},\alpha_{1},\dots,\alpha_{p}), \beta=(\beta_{0},\beta_{1},\dots,\beta_{q}), \nu=(\nu_{0},\nu_{1},\dots,\nu_{s})\in \Cp$  des compositions étendues.
	On a : 
	\begin{align*} 
	& (Z_{\alpha}Z_{\beta})(\nu)=\\
	& \sum\limits_{a=0}^{\nu_{0}}\sum\limits_{n=0}^{s}\sum\limits_{1\leq i_{1}<\dots<i_{n}\leq s}\sum\limits_{t=n}^{\nu_{i_{1}}+\dots+\nu_{i_{n}}}\sum\limits_{\substack{k_{i_{1}}+\dots+k_{i_{n}}=t \\ 1\leq k_{i_{j}}\leq \nu_{i_{j}}}}\binom{\nu_{0}}{a}\binom{\nu_{i_{1}}}{k_{i_{1}}}\dots\binom{\nu_{i_{n}}}{k_{i_{n}}}Z_{\alpha}(v_{a,I,K})Z_{\beta}(w_{a,I,K})
	\end{align*}
	où \begin{align*}
	v_{a,I,K}=&(a,k_{i_{1}},\dots,k_{i_{n}}),\\
	w_{a,I,K}=&(\nu_{0}-a+\sum\limits_{j=1}^{n}(\nu_{i_{j}}-k_{i_{j}}),\nu_{u_{1}},\dots,\nu_{u_{s-n}}),\\
	\{u_{1}<\dots<u_{s-n}\}=&\llbracket1,s\rrbracket\setminus\{i_{1}\dots,i_{n}\}.
	\end{align*}
	Or $$Z_{\alpha}(v_{a,k,i})Z_{\beta}(w_{a,k,i})\neq 0 \iff \left\{ \begin{array}{rcl}
	a &=& \alpha_{0}\\
	s &=& p+q\\
	n &=& p \\
	k_{i_{j}} &=& \alpha_{j}, \forall j\in\llbracket1,p\rrbracket\\
	\nu_{u_{j}} &=& \beta_{j}, \forall j\in\llbracket1,q\rrbracket\\
	\nu_{0}+\sum\limits_{j=1}^{p}\nu_{i_{j}} &=& \beta_{0}+\alpha_{0}+\sum\limits_{j=1}^{p}\alpha_{j}\\
	0<\alpha_{j} &\leq& \nu_{i_{j}}, \forall j\in\llbracket1,p\rrbracket
	\end{array} \right.$$
	On obtient donc le résultat énoncé.
\end{proof}
\Ex{}
\begin{align*}
	Z_{(0,1)}Z_{(0,1)}=&2Z_{(0,1,1)},\\
	Z_{(0,1)}Z_{(1,1)}=&2Z_{(0,2,1)}+2Z_{(0,1,2)}+2Z_{(1,1,1)}.
\end{align*}

\section{Isomorphisme entre $\dual{\ISPW}$ et $\QSym$, isomorphisme entre $\ISPW$ et $\NSym$}\label{morphQSym}
Dans cette section, l'objectif est d'expliciter un isomorphisme d'algèbres de Hopf entre $\dual{\ISPW}$ et $\QSym$ (\emph{cf.} paragraphe \ref{QSymdef}), et donc un isomorphisme entre $\NSym$ et $\ISPW$ selon un procédé décrit par Aguiar, Bergeron et Sottile dans \cite[théorème 4.1]{Aguiar2006}.  

\subsection{Rappels.} Effectuons quelques rappels sur le procédé de construction de morphismes introduit par Aguiar, Bergeron et Sottile dans \cite{Aguiar2006}. 
Dans l'algèbre de Hopf $\QSym$, on utilise la base définie dans le paragraphe \ref{QSymdef} à savoir $(M_{\alpha})_{\alpha=(\alpha_{1},\dots,\alpha_{k})\in\N^{k}}$. 
Ici, on appellera algèbre de Hopf combinatoire une algèbre de Hopf $H$ munie d'un caractère $\zeta$. Considèrons donc une algèbre de Hopf combinatoire  $(H,\zeta)$ ainsi que l'algèbre de Hopf combinatoire $(\QSym,\zeta_{Q})$ où $\zeta_{Q}$ est le caractère de $\QSym$ défini par: 
$$\forall \alpha=(\alpha_{1},\dots,\alpha_{k}) \mbox{, } \zeta_{Q}(M_{\alpha})=\begin{cases}
1 & \mbox{ si } k=0 \mbox{ ou } k=1,\\
0 & \mbox{ sinon. } 
\end{cases}$$
On sait (\emph{ cf.} \cite[théorème 4.1]{Aguiar2006}) qu'il existe alors un unique morphisme d'algèbres de Hopf graduées
$$\Psi :(H,\zeta) \longrightarrow (\QSym,\zeta_{Q})$$
tel que $\zeta_{Q}\circ\Psi=\zeta$. De plus, pour tout élément $h\in(H)_{n}$ de degré $n$, $\Psi(h)=\sum\limits_{\alpha\models n}\zeta_{\alpha}(w)M_{\alpha}$ où, pour $\alpha=(\alpha_{1},\dots,\alpha_{k})$, $\zeta_{\alpha}$ est la composée
$$H \xrightarrow{\Delta^{(k-1)}} H^{\otimes k}\twoheadrightarrow(H)_{\alpha_{1}}\otimes\dots\otimes(H)_{\alpha_{k}}\xrightarrow{\zeta^{\otimes k}}\K.$$

Considérons maintenant l'algèbre de Hopf $\dual{\ISPW}$. Définissons un caractère $\zeta$ permettant d'obtenir, grâce au procédé ci-dessus, un isomorphisme d'algèbres de Hopf entre $\dual{\ISPW}$ et $\QSym$. Par transposition de cette application nous aurons ainsi un isomorphisme d'algèbres de Hopf explicite entre $\NSym$ et $\ISPW$.

\subsection{Application à l'algèbre de Hopf $\dual{\ISPW}$. Isomorphisme entre $\ISPW$ et $\NSym$}
Le procédé explicité par Aguiar, Bergeron et Sottile permet de définir un isomorphisme d'algèbres de Hopf $\Psi$ entre $\dual{\ISPW}$ et $\QSym$. Pour cela, on utilisera l'écriture en composition des éléments de $\dual{\ISPW}$ \emph{i.e.}, pour tout entier $n\in\N^{*}$ et tout $n$-uplet $(k_{1},\dots,k_{n})\in(\N^{*})^{n}$, l'élément $Z_{\underbrace{x_{1}\dots x_{1}}_{k_{1} \text{ fois}}\dots\underbrace{x_{n}\dots x_{n}}_{k_{n}}}$ est noté $Z_{(k_{1},\dots,k_{n})}$.
\begin{prop}
	L'application 
	$$\zeta:\left\{\begin{array}{rcl} \dual{\ISPW} &\longrightarrow & \K \\Z_{(k_{1},\dots,k_{n})} &\longrightarrow & \cfrac{1}{n!}\end{array}\right.$$
	est un caractère de $\dual{\ISPW}$.
\end{prop}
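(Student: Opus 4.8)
The plan is to verify directly that $\zeta$ is a unital algebra morphism $\dual{\ISPW}\to\K$, which is precisely what it means to be a character. The crucial observation, which makes everything routine, is that $\zeta(Z_{(k_{1},\dots,k_{n})})=1/n!$ depends only on the \emph{length} $n$ of the indexing composition and not on the individual parts $k_{1},\dots,k_{n}$. Hence $\zeta$ factors through the length function on compositions, and the whole statement reduces to a single binomial identity.

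First I would check unitality: the unit of $\dual{\ISPW}$ is the basis vector indexed by the empty composition, i.e. the case $n=0$, so $\zeta(1_{\dual{\ISPW}})=1/0!=1$. The core step is then multiplicativity. Using the explicit product of $\dual{\ISPW}$ recalled just above, the product $Z_{(k_{1},\dots,k_{n})}Z_{(l_{1},\dots,l_{m})}$ equals $Z_{(k_{1},\dots,k_{n})\shuffle(l_{1},\dots,l_{m})}$, that is the sum over all shuffles $\tau\in Bat(n,m)$ of basis vectors $Z_{(\gamma_{\tau^{-1}(1)},\dots,\gamma_{\tau^{-1}(n+m)})}$. Each summand is indexed by a composition of length exactly $n+m$, and the number of shuffles is $|Bat(n,m)|=\binom{n+m}{n}$. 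Since $\zeta$ assigns $1/(n+m)!$ to every such term, we obtain
$$\zeta\bigl(Z_{(k_{1},\dots,k_{n})}Z_{(l_{1},\dots,l_{m})}\bigr)=\binom{n+m}{n}\frac{1}{(n+m)!}=\frac{1}{n!\,m!}=\zeta\bigl(Z_{(k_{1},\dots,k_{n})}\bigr)\,\zeta\bigl(Z_{(l_{1},\dots,l_{m})}\bigr).$$
Extending by bilinearity gives $\zeta(xy)=\zeta(x)\zeta(y)$ for all $x,y\in\dual{\ISPW}$, which together with unitality establishes that $\zeta$ is a character.

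There is essentially no obstacle here: the only two facts used are that the shuffle product preserves total length and produces exactly $\binom{n+m}{n}$ terms, and that $\zeta$ is constant on compositions of a given length. The argument then closes on the elementary identity $\binom{n+m}{n}/(n+m)!=1/(n!\,m!)$. The mild conceptual point worth emphasizing is \emph{why} this particular normalization $1/n!$ is chosen rather than an arbitrary length-dependent constant: it is exactly the factorials that convert the $\binom{n+m}{n}$ shuffle terms into a multiplicative weight, which is what will later make $\zeta$ the right character to feed into the Aguiar--Bergeron--Sottile universal property and recover the canonical isomorphism with $\QSym$.
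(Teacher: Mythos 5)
Your proof is correct and follows essentially the same route as the paper: both arguments rest on the observation that the product of two basis elements indexed by compositions of lengths $n$ and $m$ is a sum of $\binom{n+m}{n}$ terms, each indexed by a composition of length $n+m$, so that $\zeta$ of the product equals $\binom{n+m}{n}/(n+m)!=1/(n!\,m!)$. The added check of unitality and the closing remark on the choice of normalization are harmless supplements to what is the paper's exact computation.
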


\begin{proof}
	Le produit de deux compositions $Z_{(k_{1},\dots,k_{n})}$ et $Z_{(l_{1},\dots,l_{m})}$ de $\dual{\ISPW}$ est une somme de $\displaystyle \binom{n+m}{n}$ compositions de longueur $n+m$. Ainsi, $$\zeta(Z_{(k_{1},\dots,k_{n})}Z_{(l_{1},\dots,l_{m})})=\frac{\binom{n+m}{n}}{(n+m)!}=\frac{1}{n!m!}=\zeta(Z_{(k_{1},\dots,k_{n})})\zeta(Z_{(l_{1},\dots,l_{m})}).$$
\end{proof}

\begin{prop}\label{isoISPWdQSym}
	L'application $\Psi$, définie par 
	$$\Psi:\left\{\begin{array}{rcl} \dual{\ISPW} &\longrightarrow & \QSym \\Z_{(k_{1},\dots,k_{n})} &\longrightarrow & \sum\limits_{i=0}^{n-1}\sum\limits_{(s_{1},\dots,s_{i+1})\models n}\cfrac{1}{s_{1}!\dots s_{i+1}!}M_{(k_{1}+\dots+k_{s_{1}}~,~\dots~,~k_{(s_{1}+\dots+s_{i}+1)}+\dots +k_{n})},\end{array}\right.$$
	est un isomorphisme d'algèbres de Hopf.
\end{prop}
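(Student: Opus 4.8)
Le plan est d'appliquer le proc\'ed\'e d'Aguiar, Bergeron et Sottile rappel\'e ci-dessus. La proposition pr\'ec\'edente assure que $\zeta$ est un caract\`ere, donc que $(\dual{\ISPW},\zeta)$ est une alg\`ebre de Hopf combinatoire. D'apr\`es \cite[th\'eor\`eme 4.1]{Aguiar2006}, il existe alors un \emph{unique} morphisme d'alg\`ebres de Hopf gradu\'ees $\Psi_0:(\dual{\ISPW},\zeta)\longrightarrow(\QSym,\zeta_Q)$ tel que $\zeta_Q\circ\Psi_0=\zeta$, donn\'e explicitement par $\Psi_0(h)=\sum_{\alpha\models N}\zeta_\alpha(h)M_\alpha$ pour $h$ de degr\'e $N$. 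Il suffit donc d'\'etablir deux points: que l'application $\Psi$ de l'\'enonc\'e co\"incide avec $\Psi_0$ (ce qui entra\^ine aussit\^ot que $\Psi$ est un morphisme d'alg\`ebres de Hopf), puis que $\Psi$ est bijective.

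Pour le premier point, je calculerais $\zeta_\alpha(Z_{(k_1,\dots,k_n)})$ \`a l'aide du coproduit de $\dual{\ISPW}$, qui est une d\'econcat\'enation. Son it\'er\'e $\De^{(\ell-1)}$ appliqu\'e \`a $Z_{(k_1,\dots,k_n)}$ est la somme, sur tous les d\'ecoupages de la suite $(k_1,\dots,k_n)$ en $\ell$ blocs cons\'ecutifs \'eventuellement vides, des tenseurs $Z_{(k_1,\dots,k_{j_1})}\ot\dots\ot Z_{(k_{j_{\ell-1}+1},\dots,k_n)}$. Pour une composition $\alpha=(\alpha_1,\dots,\alpha_\ell)\models N$, la projection sur $(\dual{\ISPW})_{\alpha_1}\ot\dots\ot(\dual{\ISPW})_{\alpha_\ell}$ ne retient que les d\'ecoupages dont le $t$-i\`eme bloc est de degr\'e $\alpha_t$; comme chaque $\alpha_t$ est strictement positif, aucun bloc ne peut \^etre vide. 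En notant $s_t\ge1$ le nombre de parts du $t$-i\`eme bloc et en utilisant $\zeta(Z_{(l_1,\dots,l_{s})})=\frac{1}{s!}$, on obtient
\[
\zeta_\alpha(Z_{(k_1,\dots,k_n)})=\sum\frac{1}{s_1!\dots s_\ell!},
\]
la somme portant sur les d\'ecoupages $(s_1,\dots,s_\ell)\models n$ tels que $k_1+\dots+k_{s_1}=\alpha_1,\dots,k_{s_1+\dots+s_{\ell-1}+1}+\dots+k_n=\alpha_\ell$. En regroupant $\sum_{\alpha\models N}\zeta_\alpha(Z_{(k_1,\dots,k_n)})M_\alpha$ selon le d\'ecoupage $(s_1,\dots,s_\ell)$, chaque d\'ecoupage fournissant une unique composition $\alpha$ (celle des sommes de blocs), on retombe exactement sur la formule de l'\'enonc\'e avec $\ell=i+1$. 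D'o\`u $\Psi=\Psi_0$.

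Pour la bijectivit\'e, on remarque que $\dual{\ISPW}$ et $\QSym$ ont m\^eme s\'erie de \PH{} (proposition \ref{qsymispwdual}), leurs composantes homog\`enes de degr\'e $N$ admettant toutes deux une base index\'ee par les compositions de $N$; il suffit donc de voir que $\Psi$ est injective en chaque degr\'e. Or le d\'ecoupage le plus fin $(s_1,\dots,s_n)=(1,\dots,1)$ produit le terme $M_{(k_1,\dots,k_n)}$ avec coefficient $1$, tandis que tout autre d\'ecoupage produit un $M_\beta$ o\`u $\beta$ est un regroupement \emph{strict} de $\kappa=(k_1,\dots,k_n)$, donc poss\`ede strictement moins de parts. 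Ainsi
\[
\Psi(Z_\kappa)=M_\kappa+\sum_{\beta}c_\beta M_\beta,
\]
o\`u $\beta$ parcourt les regroupements stricts de $\kappa$. Le regroupement \'etant un ordre partiel sur les compositions de $N$, la matrice de $\Psi$ dans les bases $(Z_\kappa)_{\kappa\models N}$ et $(M_\beta)_{\beta\models N}$ est unitriangulaire relativement \`a un ordre lin\'eaire raffinant cet ordre partiel (par exemple par nombre de parts d\'ecroissant). Elle est donc inversible, ce qui montre que $\Psi$ est un isomorphisme en chaque degr\'e, donc un isomorphisme d'alg\`ebres de Hopf.

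L'obstacle principal r\'eside dans le premier point: il faut g\'erer soigneusement l'it\'er\'e du coproduit de d\'econcat\'enation, la projection homog\`ene et la contrainte de degr\'e, puis reconna\^itre que le param\'etrage par les d\'ecoupages $(s_1,\dots,s_\ell)$ co\"incide avec celui par les compositions $\alpha$ de la formule d'Aguiar-Bergeron-Sottile. La bijectivit\'e, une fois l'unitriangularit\'e rep\'er\'ee, ne soul\`eve aucune difficult\'e.
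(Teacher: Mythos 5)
Votre preuve est correcte et suit essentiellement la m\^eme d\'emarche que celle de l'article : application du th\'eor\`eme d'Aguiar, Bergeron et Sottile au caract\`ere $\zeta$, calcul de $\zeta_\alpha$ via le coproduit de d\'econcat\'enation it\'er\'e (dont les blocs non vides correspondent exactement aux compositions $(s_1,\dots,s_{i+1})\models n$), puis bijectivit\'e lue sur la base $(Z_{(k_1,\dots,k_n)})$. Vous explicitez seulement deux points que l'article laisse implicites, \`a savoir le regroupement des termes selon les d\'ecoupages et l'argument d'unitriangularit\'e par rapport au nombre de parts, que l'article qualifie d'imm\'ediat.
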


\begin{proof}
	Soient $Z_{(k_{1},\dots,k_{n})}$ un élément de $\dual{\ISPW}$ et $i$ un entier naturel de l'intervalle $\llbracket 1, n-1 \rrbracket$. Notons $\tilde{\De}^{i}$, le coproduit réduit $\tilde{\De}$ itéré $i$ fois. On sait que:
	$$\tilde{\De}^{i}(Z_{(k_{1},\dots,k_{n})})=\sum_{(s_{1},\dots,s_{i+1})\models n}Z_{(k_{1},\dots,k_{s_{1}})}\ot\dots\ot Z_{(k_{(s_{1}+\dots+s_{i}+1)},\dots,k_{n})}.$$
	On considère alors le caractère $\zeta$ défini dans la proposition précédente et le procédé de Aguiar, Bergeron et Sottile donne l'application $\Psi$. Cette dernière est donc un morphisme d'algèbres de Hopf graduées. En considérant la base $(Z_{(k_{1},\dots,k_{n})})_{(n,k_{1},\dots,k_{n})\in(\N^{*})^{n+1}}$, le fait que $\Psi$ soit un isomorphisme est immédiat.  
\end{proof}

\begin{prop}\label{isoNSymISPW}
	L'application $\Psi^{*}$, définie par
	$$\Psi^{*}:\left\{\begin{array}{rcl} \NSym &\longrightarrow & \ISPW \\M^{*}_{(k_{1},\dots,k_{n})} &\longrightarrow & \sum\limits_{i=n}^{k_{1}+\dots+k_{n}}\sum\limits_{\substack{s_{1}+\dots+s_{n}=i \\ \forall j,~1\leq s_{j}\leq k_{j}}}\sum\limits_{\substack{(u_{1}^{(1)},\dots,u_{s_{1}}^{(1)})\models k_{1}\\ \vdots \\ (u_{1}^{(n)},\dots,u_{s_{n}}^{(n)})\models k_{n}}}\cfrac{1}{s_{1}!\dots s_{n}!}w_{U},\end{array}\right.$$
	avec $w_{U}=\underbrace{x_{1}\dots x_{1}}_{u_{1}^{(1)} \text{ fois}}\ast\dots\ast \underbrace{x_{1}\dots x_{1}}_{u_{s_{1}}^{(1)} \text{ fois}}\ast\dots\ast\underbrace{x_{1}\dots x_{1}}_{u_{1}^{(n)} \text{ fois}}\ast\dots\ast \underbrace{x_{1}\dots x_{1}}_{u_{s_{n}}^{(n)} \text{ fois}}$,
	est un isomorphisme d'algèbres de Hopf.	
\end{prop}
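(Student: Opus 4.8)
The plan is to realise $\Psi^*$ as the graded transpose of the isomorphism $\Psi$ of Proposition \ref{isoISPWdQSym}, and then to check that this transpose is exactly the map written in the statement. Recall that, for graded connected Hopf algebras with finite-dimensional graded components, the graded transpose of a Hopf algebra morphism is again a Hopf algebra morphism (transposition exchanges product and coproduct, unit and counit, and sends the antipode to its transpose), and that the transpose of an isomorphism is an isomorphism. Since $\Psi:\dual{\ISPW}\longrightarrow\QSym$ is a graded Hopf algebra isomorphism, its transpose is a graded Hopf algebra isomorphism $\dual{\QSym}\longrightarrow\dual{(\dual{\ISPW})}$. Identifying $\dual{\QSym}$ with $\NSym$ and $\dual{(\dual{\ISPW})}$ with $\ISPW$ (the bidual identification being legitimate by graded connectedness), we obtain a Hopf algebra isomorphism $\NSym\longrightarrow\ISPW$. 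It therefore only remains to verify that this transpose coincides with the explicit map $\Psi^*$ of the statement.

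To do this I would evaluate the defining pairing. For a composition $(k_{1},\dots,k_{n})$ and a basis element $Z_{\beta}$ of $\dual{\ISPW}$ (with $\beta$ a composition), the transpose satisfies
$$\langle \Psi^*(M^{*}_{(k_{1},\dots,k_{n})}),Z_{\beta}\rangle=\langle M^{*}_{(k_{1},\dots,k_{n})},\Psi(Z_{\beta})\rangle.$$
Substituting the explicit formula for $\Psi(Z_{\beta})$ from Proposition \ref{isoISPWdQSym} and using $\langle M^{*}_{\alpha},M_{\gamma}\rangle=\delta_{\alpha,\gamma}$, this pairing becomes a sum of weights $\tfrac{1}{s_{1}!\dots s_{i+1}!}$ indexed by the groupings of the consecutive parts of $\beta$ into blocks whose successive sums reproduce $(k_{1},\dots,k_{n})$. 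In other words it is nonzero exactly when $(k_{1},\dots,k_{n})$ is a coarsening of $\beta$ obtained by summing consecutive parts.

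Reading this off on the $\ISPW$ side, $\Psi^*(M^{*}_{(k_{1},\dots,k_{n})})=\sum_{\beta}\langle M^{*}_{(k_{1},\dots,k_{n})},\Psi(Z_{\beta})\rangle\,w_{\beta}$ is supported on the packed words $w_{\beta}$ whose composition $\beta$ refines $(k_{1},\dots,k_{n})$. Such a refinement amounts precisely to choosing, for each $j\in\{1,\dots,n\}$, a composition $(u^{(j)}_{1},\dots,u^{(j)}_{s_{j}})\models k_{j}$; the concatenation of these compositions is $\beta$, and the associated packed word is exactly the word $w_{U}$ of the statement. The total number of parts of $\beta$ is $i=s_{1}+\dots+s_{n}$, which ranges from $n$ (all $s_{j}=1$) to $k_{1}+\dots+k_{n}$ (all parts equal to $1$), with $1\leq s_{j}\leq k_{j}$; moreover the block sizes recovering $(k_{1},\dots,k_{n})$ are precisely $s_{1},\dots,s_{n}$, so the weight attached to $w_{U}$ is $\tfrac{1}{s_{1}!\dots s_{n}!}$. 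Collecting these contributions reproduces verbatim the formula of the statement, which shows that $\Psi^*$ is the transpose of $\Psi$ and hence an isomorphism of Hopf algebras.

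The main obstacle, which is purely bookkeeping, will be to match the two combinatorial descriptions: the coarsening-by-summing-consecutive-blocks defining $\Psi$ must be dualised into the refinement-by-choosing-compositions defining $\Psi^*$, while verifying that the multiplicity weights transfer correctly and that the summation ranges ($i$ from $n$ to $k_{1}+\dots+k_{n}$, and $1\leq s_{j}\leq k_{j}$) come out exactly as written.
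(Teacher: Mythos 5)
Votre démonstration est correcte et suit exactement la même démarche que celle de l'article, qui se contente d'indiquer qu'il suffit de transposer l'isomorphisme $\Psi$ de la proposition \ref{isoISPWdQSym}. Vous explicitez en outre la vérification combinatoire (dualité entre regroupement de parts consécutives et raffinement par compositions) que l'article laisse implicite.
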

\begin{proof}
	Il suffit de transposer l'application $\Psi$ précédente.
\end{proof}


\bibliographystyle{siam}
\bibliography{biblio_mots_tasses}
\end{document}